\def\R{{\mathbb R}}
\newtheorem{thm}{Theorem}[section]
\newtheorem{cor}[thm]{Corollary}
\newtheorem{lem}[thm]{Lemma}
\newtheorem{prop}[thm]{Proposition}
\theoremstyle{definition}
\newtheorem{de}[thm]{Definition}
\theoremstyle{remark}
\newtheorem{rem}[thm]{Remark}
\newtheorem{ass}[thm]{\bfseries Assumption}
\numberwithin{equation}{section}
\newcommand{\rmd}{{\rm d}}
\newcommand{\rme}{{\rm e}}
\begin{document}

\title[A trajectorial approach to gradient flows for McKean-Vlasov SDEs]{A trajectorial approach to the gradient flow of McKean-Vlasov SDEs with mobility}

\author{Zhenxin Liu}
\address{Z. Liu: School of Mathematical Sciences, Dalian University of Technology, Dalian
116024, P. R. China}
\email{zxliu@dlut.edu.cn}

\author{Xuewei Wang}
\address{X. Wang (Corresponding author): School of Mathematical Sciences, Dalian University of Technology, Dalian
116024, P. R. China}
\email{Wangxueweii@163.com}

\date{January 20, 2025}

\subjclass[2020]{49Q22, 60H10, 35Q84, 60Q44}

\keywords{McKean-Vlasov SDE, gradient flow, nonlinear Fokker-Planck equation, optimal transport.}

\begin{abstract}
We establish the gradient flow representation of diffusion with mobility $b$ with respect to the modified Wasserstein quasi-metric $W_h$, where $h(r)=rb(r)$.
The appropriate selection of the free energy functional depends on the specific form of the generalized entropy.
Different from the JKO scheme, we derive the trajectorial version of the relative entropy dissipation identity for the McKean-Vlasov stochastic differential equation (SDE) with Nemytskii-type coefficients, utilizing techniques from stochastic analysis.
Based on this, we demonstrate that the trajectorial average of the solution process to the McKean-Vlasov SDE, with respect to the underlying measure, corresponds to the rate of dissipation of the free energy.
As an application, we present the energy dissipation of the Fermi-Dirac-Fokker-Planck equation, a model widely used in physics and biology to describe saturation effects.
Inspired by numerical simulations, we propose two questions on condensation phenomena and non-exponential convergence rate.
\end{abstract}

\maketitle

\section{Introduction}
\setcounter{equation}{0}

We are interested in the following McKean-Vlasov SDE with coefficients of Nemytskii-type
\begin{equation}\label{MVSDE}
\begin{aligned}
\rmd X(t)&=-\nabla \Phi(X(t)) b\big(p(t,X(t))\big) \rmd t + \sqrt{\frac{2f\big(p(t,X(t))\big)}{p(t,X(t))}}\rmd W(t),  t \geq 0, \\
X(0)&=\xi_0
\end{aligned}
\end{equation}
on $\R^d$, where $W(t)$, $t\geq 0$, is a standard Brownian motion on a probability space $(\Omega, \mathcal{G}, \mathbb{P})$ with normal filtration $(\mathcal{G}(t))_{t\geq 0}$. The smooth function $\Phi: \R^d \rightarrow [0, +\infty)$ serves as a potential field and the smooth functions $b, f:\R \rightarrow [0, +\infty)$ represent the influence of the distribution on drift term and diffusion term, respectively.
Let $P(t):=Law(X(t))$ and $p(t,x)$ be the density of $P(t)$, $t \geq 0$.
Under appropriate assumptions, \eqref{MVSDE} has a pathwise unique strong solution and $p(t,x)$ satisfies the following nonlinear Fokker-Planck equation
\begin{equation}\label{NFPE}
\begin{aligned}
\partial_t p(t,x) &= {\rm div} \big(\nabla \Phi(x) b(p(t,x)) p(t,x)\big) + \Delta f(p(t,x)), (t, x)\in [0, \infty) \times \R^d, \\
p(0, x)&=p_0 (x), \quad x \in \R^d.
\end{aligned}
\end{equation}
In contrast to the linear Fokker-Planck equation, the nonlinear equation takes into account also the effects of particle mobility and nonlinear diffusion. The function $b(p)$ reflects the dependence of particle mobility on the density $p$.
This has significant applications in mathematical biology and mathematical physics, particularly in modeling phenomena that aim to prevent overcrowding \cite{CHR, NFPE}.
We say that $b$ is a saturation function if it is non-increasing and there exists a constant $\alpha>0$ such that $b(\alpha)=0$, and $(\alpha-s)b(s)>0$ for $s\neq \alpha$. The constant $\alpha$ is referred to as the saturation level \cite{BCH}. A typical example is $f(p)=p$ and $\alpha =1$ \cite{CLR}, in which case the equation is known as the Fermi-Dirac-Fokker-Planck equation.

The study of $W_2$-gradient flow for linear Fokker-Planck equations can be traced back to the work of Jordan, Kinderlehrer and Otto \cite{JKO2}. Notably, they introduced the $W_2$-metric and applied discretization methods to demonstrate that the heat equation is a $W_2$-gradient flow. This analytical framework, known as the JKO scheme, has been widely applied in the study of gradient flows for various equations \cite{CS, GM, LWL, MS}.
Very recently, Karatzas et al. \cite{KST} proposed a creative description of the $W_2$-gradient flow from the perspective of SDEs and employed stochastic analysis techniques to provide a trajectorial interpretation of the $W_2$-gradient flow for the classical SDE
\begin{equation}\label{langevin}
\begin{aligned}
\rmd X(t)&=-\nabla \Phi(X(t)) \rmd t + \sqrt2\rmd W(t), t\geq 0,\\
X(0)&=\xi_0.
\end{aligned}
\end{equation}
The equation \eqref{langevin} describes the motion of individual particles in the system, which is influenced solely by their own positions. However, the existence of inter-particle interactions implies that the motion of individual particles is also affected by all other particles in the system. To describe such interactions, Kac \cite{K} introduced McKean-Vlasov SDEs, which characterize these effects through the influence of the distribution on the drift and diffusion terms.
Due to their capability to more accurately model significant problems in fields such as physics and finance, McKean-Vlasov SDEs have attracted substantial attention. Naturally, the gradient flow of McKean-Vlasov SDEs has also become a topic of significant research interest.
In \cite{TC}, the trajectorial version of the $W_2$-gradient flow for the SDE with the nonlinear drift term $-\big( \nabla \Phi(X(t))+\nabla (W(x)\ast P(t))(X(t)) \big)$ is studied.
In \cite{KC}, the SDE without the drift term is analyzed on an open connected bounded domain, with normal reflection at the boundary.
In contrast to previous works, we focus on the case where both the drift and diffusion terms are dependent on the distribution $P$.
Under these conditions, the basic components for gradient flow of the McKean-Vlasov SDE, specifically the internal energy and the metric, will undergo modifications.
The quadratic Wasserstein metric $W_2$ and the Boltzmann-Gibbs-Shannon entropy $E(p)=\int_{\R^d} p \ln p \rmd x$ are no longer applicable in this context, as the equation \eqref{NFPE} is affected by the nonlinear terms $f$ and $b$. To conquer this difficulty, we find that the modified Wasserstein metric \cite{CLSS, DNS}
\begin{equation*}
W_h^2(\bar{\mu}_0, \bar{\mu}_1):=\inf \Big\{ \int_0^1 \!\! \int_{\R^d} |v_t|^2 h(u_t) \rmd x \rmd t \Big| (\mu, v) \in \mathcal{CE}(0,1), \rmd \mu_t=u_t \rmd x, \mu_i=\bar{\mu}_i, i=0,1 \Big\}
\end{equation*}
and the generalized entropy \cite{N2}
\begin{equation*}
E_g(p):=-\int_{\R^d} \eta(p) \rmd x
\end{equation*}
are unique choices which work in our case, where $h(p):=pb(p)$ and $\eta(r):=\int_0^r g(s)\rmd s := \int_0^r \int_1^s \frac{f'(w)}{wb(w)} \rmd w \rmd s $. For precise definitions, see Definitions \ref{modified} and \ref{relative.entropy}.
Correspondingly, the free energy functional $\mathcal{F}(p)$
is given by
\begin{equation*}\label{energy1}
\mathcal{F}(p(t,x)) :=\int_{\R^d} \eta (p(t,x)) +\Phi(x) p(t,x) \rmd x.
\end{equation*}

In the process of deriving the energy dissipation rate, a primary challenge lies in estimating the distribution. Since we consider the equation in the whole space and both the drift and diffusion terms are influenced by the distribution, estimating the behavior of the particle distribution at infinity becomes an unavoidable issue. We establish the results $P(t)\in \mathcal{P}_2(\R^d)$ and $\int_0^T \!\! \int_{\R^d} \frac{|\nabla p(t,x)|^2}{p(t,x)} \rmd x \rmd t<+\infty$, which will serve as a crucial foundation for subsequent analysis.
Subsequently, we derive the relative entropy dissipation
\begin{equation}\label{relative}
\dfrac{\rmd }{\rmd t}H_g(P(t)|P_{\infty})=-I_g(P(t)|P_{\infty})
:= \big\| \nabla\big(g(\rho(X(t_0)))+\Phi(X(t_0))\big) \big\|^2_{L^2(\hat{h}(P(t_0)))}
\end{equation}
and the Wasserstein metric derivative
\begin{equation*}
\begin{aligned}
\lim_{t \downarrow t_0} \dfrac{W_h\big(P(t), P(t_0)\big)}{t-t_0}
= \big( I_g(P(t_0)|P_{\infty}) \big)^{\frac{1}{2}},
\end{aligned}
\end{equation*}
where $P_{\infty}$ is the stationary distribution of \eqref{MVSDE}.
In fact, \eqref{relative} represents the probabilistic formulation of the energy dissipation
\begin{equation*}\label{general}
\dfrac{\rmd }{\rmd t}\mathcal{F}(p(t,x))= -\mathcal{I}(p(t,x)) :=\int_{\R^d} |\nabla\big(g(p(s,x))+\Phi(x)\big)|^2 b\big(p(s,x)\big)p(s,x) \rmd x.
\end{equation*}
For further details, refer to Subsection \ref{introduction}.
Since McKean-Vlasov SDEs allow for pathwise analysis, we can more precisely derive the trajectory version of the energy dissipation
\begin{equation*}
\lim_{t \downarrow t_0} \dfrac{\mathbb{E}_{\mathbb{P}}[\theta(t, X(t))| \mathcal{G}(t_0)] -\theta(t_0, X(t_0)) }{t-t_0}
= D(t_0, X(t_0)),
\end{equation*}
where $\theta(t,x):=\frac{\eta(p(t,x))}{p(t,x)}+\Phi(x)$ represents the trajectory energy
and the form of $D$ can be found in Theorem \ref{solution0}.
This implies that the energy dissipation of the system is entirely determined by the trajectories, and the process can be precisely described using SDEs.
Finally, based on the above analysis, we demonstrate that the solution to \eqref{NFPE} is the gradient flow of the free energy functional $\mathcal{F}$ with respect to the modified Wasserstein metric $W_h$, and the specific form of the $W_h$-gradient flow is given by
\begin{equation*}
{\rm  grad}_{W_h} \mathcal{F}(P(t))=-{\rm div}\Big( h\big(\rho(X(t))\big) \big(\nabla g ( \rho(X(t)) ) +\nabla \Phi(X(t)) \big) \Big).
\end{equation*}
When these results are applied to the linear case, where $f(p)=p$ and $b(p)=1$, we recover the well-known $W_2$-gradient flow.

The rest of this article is organized as follows. In Section \ref{pre}, we introduce the preliminaries necessary for the proof.
Section \ref{mainsection} presents the main results of the paper.
Subsection \ref{introduction} provides the definition and properties of the generalized entropy $E_g(p)$ and the energy dissipation identity.
The definition and properties of the modified Wasserstein metric $W_h$, along with the Wasserstein metric derivative, are given in Subsection \ref{wh}.
The $W_h$-gradient flow of \eqref{NFPE} is established in Subsection \ref{gf}.
In Section \ref{example}, we apply our theoretical results to the Fermi-Dirac-Fokker-Planck equation and provide numerical simulations.
Section \ref{discussion} presents several numerical results based on which two questions are raised, namely condensation phenomena and non-exponential convergence rate.
The proof of the square integrability of the logarithmic gradient $\frac{\nabla p(t,x)}{p(t,x)}$ from Section \ref{pre} is given in the Appendix.

\section{Preliminaries}\label{pre}

The terminal time is fixed at $T\in (0, \infty)$.
We denote by $\mathcal{P}(\R^d)$ the set of all probability measures on $\R^d$, and define the following subsets:
$$\mathcal{P}_2(\R^d):= \Big\{P\in \mathcal{P}(\R^d): \int_{\R^d}{|x|^2\rmd  P(x)<+\infty } \Big \}, $$
$$\mathcal{P}_{ac, 2}(\R^d):= \Big\{P\in \mathcal{P}_2(\R^d): P \text{ is a absolutely continuous measures with respect to } \rmd x \Big \}.$$
For convenience, we adopt the notation $\rho(X(t)):=p(t, X(t))$. For the smooth function $h: \R \rightarrow [0, \infty)$, we define
\begin{equation*}
\begin{aligned}
\hat{h}: \mathcal{P}_{ac, 2}(\R^d) &\rightarrow \mathcal{P}_{ac, 2}(\R^d) \\
\mu &\mapsto \hat{h}(\mu)
\end{aligned}
\end{equation*}
where $\rmd \mu=u \rmd x$ and $\rmd \hat{h}(\mu):= h(u)\rmd x$.

To ensure that the solutions of \eqref{MVSDE} and \eqref{NFPE} exhibit the regularity necessary for the subsequent proofs, we introduce the following assumptions.

\begin{ass}[Regularity conditions]\label{con} \quad
\begin{enumerate}[label=(\roman*)]
  \item \label{f} For the smooth function $f:\R\rightarrow [0, +\infty)$, $f(0)=0$, $\gamma_1 \leq f'(r) \leq \gamma_2 $ for all $r \in \R$, for $0 < \gamma_1 <\gamma_2 <+\infty$.
  \item \label{b} For the smooth function $b:\R\rightarrow [0, +\infty)$, $b_0 \leq b(s) \leq b_1$ for all $s\in \R$, for $0 < b_0 <b_1<+\infty$.
  \item \label{phi} For the smooth function $\Phi:\R^d\rightarrow [0, +\infty)$, there exist $C, R \geq 0$, such that $|\nabla \Phi(x)|\leq C|x| $ for all $|x|>R$.
  \item \label{initial} The probability distribution $P(0)\in \mathcal{P}_{ac, 2}(\R^d) $ and the corresponding probability density function $p_0 \in L^{\infty}(\R^d)$, $\mathcal{F}(p_0) <M_0$ for some constant $M_0\in \R$.
  \item \label{strong} \eqref{MVSDE} has a pathwise unique strong solution $(X(t))_{0\leq t\leq T}$.
  \item \label{smooth} \eqref{NFPE} has a unique smooth solution $p$ and $p$ is the probability density of $P$. Additionally, there exists a unique stationary solution $p_{\infty}$ such that
$$p(t,x) \in L^{\infty}([0,\infty) \times \R^d), \quad p(t,x)\geq 0 \text{ on } [0, \infty)\times \R^d, $$
$$\int_{\R^d}p(t,x)\rmd x =\int_{\R^d}p_0(x)\rmd x, \forall t \geq 0,\quad p_{\infty} =g^{-1}(-\Phi+ c) \text{  for some } c \in \R. $$
\end{enumerate}
\end{ass}

\begin{rem}
\cite[Thm 4.1, Thm 4.2]{Grube2023} and \cite[prop 2.2, Thm 4.1]{BR} provide a set of sufficient conditions for \ref{strong} and \ref{smooth} of Assumption \ref{con}. \\
\end{rem}

\begin{prop}\label{P2}
Suppose Assumption \ref{con} holds.
Then the pathwise unique strong solution of \eqref{MVSDE} satisfying $$P(t)\in \mathcal{P}_2(\R^d), t\in [0, T], $$
and the corresponding smooth solution $p$ of \eqref{NFPE} is bounded.
\end{prop}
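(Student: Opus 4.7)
The content of Proposition \ref{P2} is twofold---propagation of a finite second moment of $P(t)$ on $[0,T]$ and global boundedness of $p$---but not everything requires new work. Existence and pathwise uniqueness of the strong solution $X(\cdot)$ are built into Assumption \ref{con}\ref{strong}, and the $L^{\infty}$ bound on $p$ is already declared in Assumption \ref{con}\ref{smooth}; the substantive task is to derive the second-moment estimate for $X(t)$, starting from $\mathbb{E}|\xi_0|^2<+\infty$ (which holds since $P(0)\in\mathcal{P}_{ac,2}(\R^d)$ by \ref{initial}).

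The plan is to apply It\^o's formula to $|X(t)|^2$ along \eqref{MVSDE}, localized by $\tau_n:=\inf\{t\geq 0:|X(t)|\geq n\}$, which tend to $+\infty$ almost surely by continuity of paths. Using $\nabla(|x|^2)=2x$ and $\Delta(|x|^2)=2d$, the localized expansion reads
\begin{equation*}
|X(t\wedge\tau_n)|^2 = |\xi_0|^2 - 2\int_0^{t\wedge\tau_n} X(s)\cdot\nabla\Phi(X(s))\, b(\rho(X(s)))\,\rmd s + 2d\int_0^{t\wedge\tau_n} \frac{f(\rho(X(s)))}{\rho(X(s))}\,\rmd s + M_n(t),
\end{equation*}
where $M_n$ is a martingale whose square integrability is secured by the localization $|X|\leq n$ and the uniform bound $f(\rho)/\rho\leq\gamma_2$.

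Taking expectations, I would control the three terms as follows. From \ref{f}, $f(0)=0$ together with $f'\leq\gamma_2$ gives $f(r)/r\leq\gamma_2$ for $r>0$, so the trace integral contributes at most $2d\gamma_2 T$. Combining \ref{phi} with smoothness of $\Phi$ on $\{|x|\leq R\}$ yields $|x\cdot\nabla\Phi(x)|\leq K(1+|x|^2)$ for a constant $K$; using $b\leq b_1$ from \ref{b}, the drift integral is dominated by $2Kb_1\int_0^t(1+\mathbb{E}|X(s\wedge\tau_n)|^2)\,\rmd s$. Collecting the estimates,
\begin{equation*}
\mathbb{E}|X(t\wedge\tau_n)|^2 \leq \mathbb{E}|\xi_0|^2 + (2Kb_1+2d\gamma_2)T + 2Kb_1\int_0^t \mathbb{E}|X(s\wedge\tau_n)|^2\,\rmd s,
\end{equation*}
whose right-hand side is independent of $n$. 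Gronwall's inequality gives a uniform bound, and Fatou's lemma lets me pass $n\to\infty$ to conclude $\sup_{t\in[0,T]}\mathbb{E}|X(t)|^2<+\infty$, i.e.\ $P(t)\in\mathcal{P}_2(\R^d)$.

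The only genuine technical nuisance, and what I expect to be the main obstacle, is making sure the diffusion coefficient $\sqrt{2f(\rho)/\rho}$ is well-defined and bounded on the set where $\rho$ vanishes. This is resolved by noting that smoothness of $f$ together with $f(0)=0$ lets $f(r)/r$ be extended continuously at $r=0$ with value $f'(0)\in[\gamma_1,\gamma_2]$; hence the diffusion is bounded between $\sqrt{2\gamma_1}$ and $\sqrt{2\gamma_2}$, which in turn justifies the It\^o integral, the martingale property on $[0,t\wedge\tau_n]$, and the interchange of expectation and integration used above. The boundedness of $p$ is then read off from Assumption \ref{con}\ref{smooth}.
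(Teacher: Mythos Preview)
Your proposal is correct and follows essentially the same route as the paper: apply It\^o's formula to $|X(t)|^2$, localize with exit times $\tau_n$, bound the diffusion contribution via $f(r)/r\leq\gamma_2$ and the drift contribution via the linear-growth condition on $\nabla\Phi$ combined with $b\leq b_1$, then close with Gronwall and pass to the limit; the boundedness of $p$ is indeed read off directly from Assumption~\ref{con}\ref{smooth}. Your explicit treatment of the $\rho=0$ case for $f(\rho)/\rho$ and your use of Fatou to justify the limit are tidier than the paper's version, but the underlying argument is the same.
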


\begin{proof}
We aim to prove that $P(t)\in \mathcal{P}_2(\R^d)$, which implies that $p(t, x)$ has a finite second moment for $t\in [0, T]$.
According to Assumption \ref{con} \ref{phi}, there exist $C , R\geq 0$ such that
\begin{equation}\label{x2}
\langle x, \nabla \Phi \rangle \geq -C|x|^2  \text{ for all } |x|>R.
\end{equation}
Let
$$m_R:=\max_{|x|\leq R}|\langle x, \nabla \Phi(x) \rangle | < \infty \text{ and } \tau_k:=\inf \{ t\geq 0, |X(t)|> k\} \text{ for integers } k>R. $$
From It\^{o} formula, we have
\begin{equation}\label{ito}
\begin{aligned}
\rmd |X(t)|^2
=& \Big( \dfrac{2d \cdot f\big(\rho(X(t))\big)}{\rho(X(t))}-2\Big \langle X(t), \nabla \Phi(X(t))b\big(\rho(X(t))\big) \Big \rangle \Big)\rmd t \\
 & +2X(t) \sqrt{\frac{2 f\big(\rho(X(t))\big)}{\rho(X(t))}} \rmd W(t).
\end{aligned}
\end{equation}
Denote $\phi_k(t):=\mathbb{E}_{\mathbb{P}}[|X(t \wedge \tau_k)|^2] $, $\phi (t):=\mathbb{E}_{\mathbb{P}}[|X(t)|^2]$, and we take expectation in \eqref{ito} to get
\begin{equation*}
\begin{aligned}
\phi_k(t)=&\phi(0) +\mathbb{E}_{\mathbb{P}}\Big\{ \int_0^{t \wedge\tau_k}\Big[\dfrac{2d \cdot f\big(\rho(X(t))\big)}{\rho(X(t))}-2\Big \langle X(t), \nabla \Phi(X(t))b\big( \rho(X(t)) \big) \Big \rangle \Big]\cdot \\
& \big[1_{X(t)> R}+1_{X(t)\leq R}\big] \rmd t \Big\} \\
\leq& \phi(0) +2d\gamma_1 t+2m_R b_1 t +2b_0 C \int_0^t\phi_k(u) \rmd u.
\end{aligned}
\end{equation*}
Applying the Gronwall inequality, we obtain
\begin{equation*}
\phi_k(t)\leq \phi(0)+(2m_R b_1+2d\gamma_1)t +2b_0 C\int_0^t[\phi(0)+2m_R b_1 u+2d\gamma_1 u]\rme^{2b_0 C(t-u)}\rmd u=:\varsigma(t).
\end{equation*}
Assumption \ref{con} \ref{smooth} implies $\varsigma(t)$ is finite for $t\in [0,T]$, and therefore, letting $k \rightarrow +\infty$, we have
\begin{equation*}
\phi(t)=\mathbb{E}_{\mathbb{P}} [|X(t)|^2] \leq \varsigma(t) <+\infty.
\end{equation*}
Hence $P(t)\in \mathcal{P}_2(\R^d)$ for $t\in [0,T]$.

Since $p$ is a smooth solution of \eqref{NFPE} and $p \in L^{\infty}([0,\infty) \times \R^d)$, we deduce the boundedness of $p$.
\end{proof}

Next, we establish the square integrability of the logarithmic gradient $\frac{\nabla p}{p}$ with respect to $P$ on $[0, T] \times \R^d$, which will be used in the proof of main results. We adopt the convention that $\frac{\nabla p(t,x)}{p(t,x)}:=0$ if $p(t,x)=0$.

\begin{lem}\label{integrability}
Suppose Assumption \ref{con} holds. Then $p(t, \cdot ) \in W_{\text{loc}}^{1,1}(\R^d)$ and
$$\int_0^T \!\! \int_{\R^d} \dfrac{|\nabla p(t,x)|^2}{p(t,x)} \rmd x \rmd t<+\infty. $$
\end{lem}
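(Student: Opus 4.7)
The plan is to deduce the Sobolev regularity and Fisher-information bound from the entropy dissipation identity associated with the free energy $\mathcal{F}$. Since Assumption \ref{con}\ref{smooth} already guarantees that $p$ is a smooth function on $[0,\infty)\times\R^d$, the local Sobolev assertion $p(t,\cdot)\in W^{1,1}_{\text{loc}}(\R^d)$ is immediate; the real content is the global $L^2$-in-time bound on $|\nabla p|^2/p$ integrated against $\rmd x$. The strategy is to test the nonlinear Fokker-Planck equation \eqref{NFPE} against $g(p)+\Phi$, obtain a dissipation identity, bound it below via convexity, and then extract the Fisher information from the dissipation by an elementary inequality.

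First I would derive the dissipation identity. Using $g'(r)=f'(r)/(rb(r))$ from the definition of $g$, we have $\nabla f(p)=f'(p)\nabla p = pb(p)\nabla g(p)$, so the flux of \eqref{NFPE} is $pb(p)\bigl(\nabla\Phi+\nabla g(p)\bigr)$. Multiplying \eqref{NFPE} by $g(p)+\Phi$, integrating over $\R^d$ and integrating by parts (justified by a cut-off $\chi_R(x)$ and the decay of $p$ in $L^1\cap L^\infty$ together with the at-most-linear growth of $\nabla\Phi$ from Assumption \ref{con}\ref{phi}) yields
\begin{equation*}
\frac{\rmd}{\rmd t}\mathcal{F}(p(t,\cdot)) \;=\; -\int_{\R^d}\bigl|\nabla\bigl(g(p)+\Phi\bigr)\bigr|^{2}\,p\,b(p)\,\rmd x.
\end{equation*}
Next, using convexity of $\eta$ and the identity $g(p_\infty)=c-\Phi$ together with $\int p=\int p_\infty=1$, the relative entropy $\int[\eta(p)-\eta(p_\infty)-g(p_\infty)(p-p_\infty)]\rmd x=\mathcal{F}(p(t,\cdot))-\mathcal{F}(p_\infty)$ is non-negative, so $\mathcal{F}(p(t,\cdot))$ is bounded below by $\mathcal{F}(p_\infty)$. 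Combined with $\mathcal{F}(p_0)<M_0$ from Assumption \ref{con}\ref{initial}, integrating the dissipation identity on $[0,T]$ gives
\begin{equation*}
\int_0^T\!\!\int_{\R^d}\bigl|\nabla\bigl(g(p)+\Phi\bigr)\bigr|^{2}\,p\,b(p)\,\rmd x\,\rmd t \;\leq\; \mathcal{F}(p_0)-\mathcal{F}(p_\infty)\;<\;\infty.
\end{equation*}

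To extract the desired bound I would write $\nabla g(p)=\nabla(g(p)+\Phi)-\nabla\Phi$, so that $|\nabla g(p)|^2\leq 2|\nabla(g(p)+\Phi)|^2+2|\nabla\Phi|^2$, while
\begin{equation*}
|\nabla g(p)|^{2}\,p\,b(p) \;=\; \frac{(f'(p))^{2}}{p\,b(p)}\,|\nabla p|^{2} \;\geq\; \frac{\gamma_1^{2}}{b_1}\,\frac{|\nabla p|^{2}}{p}
\end{equation*}
by Assumption \ref{con}\ref{f}--\ref{b}. The cross-term $\int|\nabla\Phi|^2 p\,b(p)\,\rmd x$ is controlled since $|\nabla\Phi|^2$ grows at most quadratically and Proposition \ref{P2} gives uniform control of $\int|x|^2 p(t,x)\rmd x$ on $[0,T]$. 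Combining these yields the stated $L^2$ bound.

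The main obstacle is the rigorous justification of the entropy-dissipation identity: one needs to show that the boundary terms in the integration by parts vanish as $R\to\infty$ and that the time derivative can be interchanged with the spatial integral. This requires combining the $L^\infty$ bound on $p$, the $\mathcal{P}_2$ estimate from Proposition \ref{P2}, the linear growth of $\nabla\Phi$, and the two-sided bounds $\gamma_1\leq f'\leq\gamma_2$, $b_0\leq b\leq b_1$ to control the fluxes $p\,b(p)\,\nabla\Phi$ and $\nabla f(p)$ uniformly in $R$; a monotone/dominated convergence argument on the cut-off identity then passes to the limit.
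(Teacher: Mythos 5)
Your overall skeleton (test \eqref{NFPE} against $g(p)+\Phi$, get $\tfrac{\rmd}{\rmd t}\mathcal{F}=-\int|\nabla(g(p)+\Phi)|^2 pb(p)\rmd x$, bound $\mathcal{F}$ below by $\mathcal{F}(p_\infty)$, and extract $|\nabla p|^2/p$ via $\gamma_1\le f'\le\gamma_2$, $b_0\le b\le b_1$ and the second-moment bound of Proposition \ref{P2}) is the natural formal route, and the final extraction step is fine. But the step you defer to a ``cut-off plus dominated convergence'' argument is exactly where the proof lives, and as stated it does not go through. The boundary and commutation terms produced by the cut-off involve the flux $\nabla f(p)=f'(p)\nabla p$ and the test function $g(p)+\Phi$, which is unbounded both at infinity (through $\Phi$, which may grow quadratically) and on the set where $p$ is small (since $g(s)\sim \mathrm{const}\cdot\ln s\to-\infty$ as $s\downarrow 0$). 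To make these terms vanish as $R\to\infty$, or to differentiate $\int\eta(p)\,\rmd x$ under the integral sign, you need some global integrability of $\nabla p$ against these weights --- and no such information is available from Assumption \ref{con}: smoothness, boundedness and mass conservation of $p$ give nothing about the decay of $\nabla p$ at infinity. In other words, the a priori control you would invoke to justify the dissipation identity is essentially the conclusion of the lemma itself, so the argument as proposed is circular at its only delicate point.

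This is precisely why the paper's Appendix does not test the equation directly with $g(p)+\Phi$. Instead it mollifies in space, $p_\varepsilon=p\ast\omega_\varepsilon$, so that all spatial derivatives land on the smooth kernel and are paid for only through $\int p\,\rmd x=1$ via the convolution inequality \eqref{inequality} of Bogachev--Krylov--R\"ockner; it regularizes the logarithm with $h_\varepsilon=p_\varepsilon+\varepsilon\max(|x|,1)^{-d-1}$ to handle the region where $p$ vanishes; it controls $|\ln h_\varepsilon|$ by the weight $\Lambda(x)=\ln\max(|x|,1)$, which is integrable against $p(t,\cdot)$ by the moment bound; and it works with the Boltzmann-type functional $\int h_\varepsilon\ln h_\varepsilon\,\rmd x$ (bounded below by $-(d+1)\int h_\varepsilon\Lambda\,\rmd x$ and above by $\mathcal{F}(p_0)$-type quantities) rather than with $\mathcal{F}$ itself, obtaining a bound on $\int_0^{t_0}\!\!\int |\nabla h_\varepsilon|^2/h_\varepsilon\,\rmd x\,\rmd t$ that is uniform in $\varepsilon$ and passes to the limit. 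If you want to rescue your approach, you must replace the bare cut-off by an approximation scheme of this kind (mollification in $x$, truncation of $g$ near $0$, and quantitative control of the commutators as in \eqref{27}--\eqref{29}); without it, the claimed identity $\tfrac{\rmd}{\rmd t}\mathcal{F}=-\mathcal{I}$ is only formal, and indeed in the paper that identity is a \emph{consequence} of Lemma \ref{integrability} (via \eqref{int2} and Theorem \ref{solution}), not an input to it.
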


Since the proof of Lemma \ref{integrability} is lengthy and technical, it is deferred to the Appendix.

\section{Main result}\label{mainsection}

In this section, we will give the rate of energy dissipation and Wasserstein metric derivative, and establish the results for the $W_h$-gradient flow.

\subsection{Dissipation of free energy}\label{introduction}

We begin our analysis by introducing the definitions of the generalized relative entropy and the modified Fisher information, and then establish the relationship between these concepts, the free energy, and the energy dissipation functional.

For \eqref{langevin}, the internal energy is given by the Boltzmann-Gibbs-Shannon entropy
\begin{equation}\label{entropy}
E(p)=\int_{\R^d} p \ln p \rmd x.
\end{equation}
and the relative entropy(Kullback-Leibler divergence) has the form
\begin{equation*}\label{Kdiv}
H(P|P_{\infty})=\mathbb{E}_{\mathbb{P}}\Big[\ln \dfrac{P}{P_{\infty}}\Big],
\end{equation*}
where $p_{\infty}(x)=c \rme^{-\Phi(x)}$ is the unique stationary solution and $\rmd P_{\infty}= p_{\infty} \rmd x$.
It should be noted that
\begin{equation*}
H(P|P_{\infty})= E(p)-E(p_{\infty}).
\end{equation*}

It is pointed out in \cite{N2} that the form of \eqref{entropy} in thermodynamics arises from $\frac{1}{\mathbb{T}}=\frac{-p'(E)}{p(E)}$, where $\mathbb{T}$ represents the temperature.
Consequently, by replacing the denominator $p(E)$ with the nonlinear term $\psi(p(E))$, we capture the nonlinear effect of the probability density $p(E)$ on the energy.
Here, $\psi(s)$ is a positive function defined on $[0, \infty)$. To clarify the notation, we set $g_{\psi}(s):=\int_1^s \frac{1}{\psi(t)} \rmd t$, $G_{\psi}(s):=\int_1^s g_{\psi}(t) \rmd t$ and $\omega_{\psi}(x):=(x-1)G_{\psi}(0)-x G_{\psi}(\frac{1}{x})$.
We now proceed to formally present the definitions.

\begin{de}[Generalized entropy and generalized relative entropy]\label{relative.entropy}
If $\psi$ is a positive function defined on $[0, \infty)$ and $p$ and $q$ are probability densities, we define {\em the generalized entropy} by
\begin{equation*}
E_g(p):=\int_{\R^n} \omega_{\psi}\Big(\dfrac{1}{p}\Big)p \rmd x
\end{equation*}
and {\em the generalized relative entropy} of $P$ with respect to $Q$ by
\begin{equation*}
\begin{aligned}
H_g[P|Q]
&:=\int_{\R^n} G_{\psi}(p)-G_{\psi}(q)-G_{\psi}'(q)(p-q) \rmd x,
\end{aligned}
\end{equation*}
where $\rmd P =p\rmd x, \rmd Q=q\rmd x$.
\end{de}

\begin{rem}\label{prop.energy}
(i) Since $\psi$ is positive, we obtain that $g_{\psi}$ is strictly increasing and $G_{\psi}$ is convex. The convexity of $G_{\psi}$ implies that $E_g(p)$ is concave. Thus, $H_g(P|Q)$ is nonnegative, and $H_g(P|Q)=0$ holds only when $p=q$. \\
(ii) Unlike Boltzmann-Gibbs-Shannon entropy, which is additive, the generalized entropy is typically non-additive. \\
(iii) The definition of $g_{\psi}$ can be modified to $g_{\psi}(s)=\int_a^s \frac{1}{\psi(t)} \rmd t$ for an appropriate constant $a$. This adjustment avoids the issue where discontinuities in $\frac{1}{\psi}$ could lead to nonintegrability. If $a$ and $b$ are chosen appropriately, then $g_{\psi}(s)=\int_a^s \frac{1}{\psi(t)} \rmd t=\int_b^s \frac{1}{\psi(t)} \rmd t+C=\tilde{g}_{\psi}(s)+C$ and $E_g(p)=E_{\tilde{g}}+C$, where $C$ is a constant. In particular, $H_g(P|Q)=H_{\tilde{g}}(P|Q)$.
Since we focus on the dissipation rate of relative entropy $H_g(P|Q)$, this modification will not affect the subsequent results.
\end{rem}

By setting $\psi(s):=\frac{sb(s)}{f'(s)}$, we obtain $\int_{\R^n} \eta(p) \rmd x =-E_g(p)-G(0)$. Hence, we define the internal energy as $\int_{\R^n} \eta(p) \rmd x$ and the free energy as $\mathcal{F}(p) =\int_{\R^d} \eta (p) +\Phi p \rmd x$.
Let $p_{\infty}$ be the unique stationary solution of \eqref{NFPE} and $\rmd P_{\infty}=p_{\infty} \rmd x$. Then
\begin{equation*}
\mathcal{F}(p_{\infty})=\int_{\R^d}\eta(p_{\infty})+\Phi p_{\infty} \rmd x =-\int_{\R^d}\!\! \int_0^{p_{\infty}} \dfrac{f'(s)}{b(s)} \rmd s \rmd x + \int_{\R^d} c p_{\infty} \rmd x\geq c-\dfrac{\gamma_2}{b_0} > -\infty
\end{equation*}
and
\begin{equation*}
\begin{aligned}
H_g[P|P_{\infty}]
=\int_{\R^n} \eta(p) -\eta(p_{\infty}) +\Phi p-\Phi p_{\infty} \rmd x
=\mathcal{F}(p)-\mathcal{F}(p_{\infty}).
\end{aligned}
\end{equation*}
Combining Remark \ref{prop.energy} (i), it follows that $p_{\infty}$ is the unique minimizer of $\mathcal{F}(p)$ and $\mathcal{F}(p)\geq \mathcal{F}(p_{\infty})>-\infty$.

As mentioned in Introduction, the relative Fisher information is a quantity closely related to the relative entropy $H[P|Q]$.
For the linear Fokker-Planck equation \eqref{langevin}, the relative Fisher information of $P(t)$ with respect to $P_{\infty}$ is defined as $$I(P(t)|P_{\infty})=\mathbb{E}_{\mathbb{P}}\big[\big| \nabla \ln \big(\frac{P(t)}{P_{\infty}}\big) \big|^2\big] =\int_{\R^d}|\nabla \ln p-\nabla \ln p_{\infty}|^2 p\rmd x$$ and $I(P(t)|P_{\infty})$ serves as the energy dissipation functional.
Since the generalized relative entropy $H_g[P|P_{\infty}]$ is adopted here, the form of the relative Fisher information undergoes a modification.
We replace $\ln p$ by $\eta'(p)$, and the corresponding score function is defined as $\nabla \eta'(p)$, see \cite{B, CT} for details in the linear case.
Taking into account the influence of mobility $b$, we provide the following definition.

\begin{de}[Modified Fisher information and relative modified Fisher information]\label{Fisherinformation}
If $P$, $Q$ are probability measures and function $h:\R \rightarrow \R$, {\em the modified Fisher information} is defined by
\begin{equation*}
I_g(P):=\mathbb{E}_{\hat{h}(\mathbb{P})}\Big[\big| \nabla \eta'(P) \big|^2\Big],
\end{equation*}
and {\em the relative modified Fisher information} of $P$ with respect to $Q$ is defined by
\begin{equation*}
I_g(P|Q):=\mathbb{E}_{\hat{h}(\mathbb{P})}\Big[\big| \nabla \big(\eta'(P)-\eta'(Q)\big) \big|^2\Big].
\end{equation*}
\end{de}

From the expression of the modified Fisher information,  it is evident that when the absolute value of the gradient of the likelihood function $\eta'(p)$ is smaller, the value of $I_g(P)$ will also be smaller.
Therefore, $I_g(P)$ serves as a measure of uncertainty or information about the variable $x$.
As the probabilities of events tend to average out, the amount of information available decreases, leading to a lower value of $I_g(P)$.
Conversely, when $I_g(P)$ is high, more information is available, and our estimate of $x$ becomes more accurate.

Let $\phi(u):=\frac{\eta(u)}{u}$, then $\theta(t,x)=\phi(p(t,x))+\Phi(x)$ and the free energy functional is given by
\begin{equation*}\label{energy}
\begin{aligned}
\mathcal{F}(p(t,x)) =\int_{\R^d} \theta(t,x) p(t,x) \rmd x =\mathbb{E}_{\mathbb{P}}[\theta(t, X(t))].
\end{aligned}
\end{equation*}
We now proceed to derive the energy dissipation identity.
Since Theorem \ref{solution0}, Corollaries \ref{cor2} and \ref{cor1} are special cases with perturbed results ($\beta =0$), we omit the proof here.

\begin{thm}\label{solution0}
Suppose Assumption \ref{con} holds.
Then the energy process $\theta(t,X(t))$ admits the semimartingale decomposition
\begin{equation*}\label{decomposition0}
\theta(t, X(t)) -\theta(0, X(0))= M(t) +F(t), 0\leq t\leq T.
\end{equation*}
Here $M(t)$ is the $L^2({\mathbb{P}})$-bounded martingale
\begin{equation*}\label{Mt0}
M(t)=\int_0^t \sqrt{\frac{2f\big(\rho(X(s))\big)}{\rho(X(s))}} \nabla \theta(s, X(s)) \rmd W(s)
\end{equation*}
and
\begin{equation*}\label{Ft0}
\begin{aligned}
 F(t) = &\int_0^t D(s, X(s)) \rmd s \\
 :=&\int_0^t \Big[\phi'\big(\rho(X(s))\big)\cdot {\rm div} \Big(\nabla\Phi(X(s)) b\big(\rho(X(s))\big)\rho(X(s))+\nabla f\big(\rho(X(s))\big)\Big) \\
 &+\Delta \theta(s, X(s)) \dfrac{f\big(\rho(X(s))\big)}{\rho(X(s))}- \Big \langle \nabla \theta(s, X(s)), \nabla\Phi(X(s)) b\big(\rho(X(s))\big) \Big \rangle\Big] \rmd s
\end{aligned}
\end{equation*}
satisfies
\begin{equation*}\label{Ft20}
\begin{aligned}
 \mathbb{E}_{\mathbb{P}} [F(t)]
 =-\int_0^t \mathcal{I}(p(s,x))\rmd s
>-\infty.
\end{aligned}
\end{equation*}
\end{thm}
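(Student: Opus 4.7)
The plan is to apply It\^o's formula to the process $\theta(t, X(t)) = \phi(\rho(X(t))) + \Phi(X(t))$ and then use the nonlinear Fokker-Planck equation \eqref{NFPE} to rewrite the time-derivative term, thereby identifying the martingale and finite-variation parts. Since $\rmd X(t) = -\nabla \Phi(X(t)) b(\rho(X(t)))\,\rmd t + \sqrt{2f(\rho(X(t)))/\rho(X(t))}\,\rmd W(t)$ has quadratic variation $(2f(\rho(X(t)))/\rho(X(t))) I_d\, \rmd t$, a direct It\^o expansion produces four contributions: the stochastic integral (exactly the proposed $M(t)$), a drift $-\langle\nabla\theta,\nabla\Phi\rangle\, b(\rho)$, the Laplacian term $\Delta\theta\cdot f(\rho)/\rho$, and $\partial_t\theta = \phi'(\rho)\,\partial_t p$. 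Substituting $\partial_t p = \mathrm{div}(\nabla\Phi\,b(p)p + \nabla f(p))$ from \eqref{NFPE} yields exactly the expression for $D(s,X(s))$ displayed in the theorem. The smoothness granted by Assumption \ref{con} makes the classical It\^o formula applicable.

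My second step would be to verify that $M(t)$ is a genuine $L^2(\mathbb{P})$-bounded martingale rather than only a local one. By It\^o's isometry this reduces to bounding $\mathbb{E}\int_0^T (2f(\rho(X(s)))/\rho(X(s)))|\nabla\theta(s,X(s))|^2\,\rmd s$. Using $f(0) = 0$ together with $f'\le \gamma_2$ gives $f(u)/u\le \gamma_2$, and $|\nabla\theta|^2\le 2|\phi'(p)|^2|\nabla p|^2 + 2|\nabla\Phi|^2$. For the gradient-of-$p$ piece I would rewrite the expectation as $\int_{\R^d} |\phi'(p)|^2|\nabla p|^2 p\,\rmd x$, use boundedness of $p$ (Proposition \ref{P2}) to control $p\,|\phi'(p)|^2$ on the range of $p$, and then invoke Lemma \ref{integrability} for the integrability of $|\nabla p|^2/p$. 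For the gradient-of-$\Phi$ piece, $|\nabla\Phi(x)|\le C|x|$ for $|x|>R$ combined with $P(s)\in\mathcal{P}_2(\R^d)$ (again Proposition \ref{P2}) provides the required bound.

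For the expectation of $F(t)$ I would proceed by taking expectations in the semimartingale decomposition: since $\mathbb{E}[M(t)] = 0$, one obtains $\mathbb{E}[F(t)] = \mathcal{F}(p(t,\cdot)) - \mathcal{F}(p_0)$. Differentiating the free energy under the integral sign gives $\frac{\rmd}{\rmd t}\mathcal{F}(p(t,\cdot)) = \int_{\R^d}(g(p)+\Phi)\,\partial_t p\,\rmd x$; substituting \eqref{NFPE} and integrating by parts (boundary terms vanishing by decay and regularity of $p$) produces $-\int_{\R^d}(\nabla g(p)+\nabla\Phi)\cdot(\nabla\Phi\, b(p)p + f'(p)\nabla p)\,\rmd x$. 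Using the key identity $f'(p)\nabla p = p b(p)\nabla g(p)$ (valid because $g'(p) = f'(p)/(pb(p))$), this collapses to $-\int_{\R^d}|\nabla(g(p)+\Phi)|^2 b(p)p\,\rmd x = -\mathcal{I}(p(t,\cdot))$. Time integration yields the stated identity, and finiteness of $\int_0^t \mathcal{I}(p(s,\cdot))\,\rmd s$ follows from expanding the square, bounding $(g'(p))^2 b(p)p = (f'(p))^2/(pb(p))\le \gamma_2^2/(pb_0)$, and invoking Lemma \ref{integrability} together with Proposition \ref{P2} to control the $|\nabla\Phi|^2$ contribution.

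The main obstacle I expect is the rigorous justification of the integration by parts and of exchanging expectation with the spatial integral on $\R^d$; the rest is calculation. This is handled by localizing with stopping times $\tau_k := \inf\{t\ge 0 : |X(t)| > k\}$, applying It\^o's formula to the stopped process (whose coefficients are bounded on $\{|x|\le k\}$), taking expectations to kill the localized martingale, and passing $k\to\infty$ using the $L^2$-bound established in the martingale step together with dominated convergence for the drift part, the uniform-in-$k$ control coming from Lemma \ref{integrability} and the second-moment estimate of Proposition \ref{P2}.
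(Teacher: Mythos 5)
Your proposal is correct and follows essentially the same route as the paper, which proves the perturbed Theorem \ref{solution} by exactly these steps---It\^o's formula combined with \eqref{NFPE} for the semimartingale decomposition, the estimate $|\nabla \theta|^2 \le 2\gamma_2^2 |\nabla p|^2/(b_0^2 p^2) + 2|\nabla \Phi|^2$ together with Lemma \ref{integrability} and Proposition \ref{P2} for the $L^2(\mathbb{P})$-bound on $M$, and spatial integration by parts using $\nabla g(p) = \nabla f(p)/(p\,b(p))$ to identify $\mathbb{E}_{\mathbb{P}}[F(t)] = -\int_0^t \mathcal{I}(p(s,\cdot))\,\rmd s > -\infty$---and then obtains Theorem \ref{solution0} as the special case $\beta = 0$. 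One cosmetic point: in your martingale bound the quantity to control is $p^2|\phi'(p)|^2 \le \gamma_2^2/b_0^2$ (so that the remaining factor is exactly $|\nabla p|^2/p$), and this follows from the bounds on $f'$ and $b$ in Assumption \ref{con} rather than from the boundedness of $p$.
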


According to the trajectorial results in Theorem \ref{solution0}, we can express the trajectorial rate of relative entropy dissipation using conditional expectation.

\begin{cor}\label{cor2}
Suppose Assumption \ref{con} holds. For all $t_0 \in [0,T)$, the trajectorial rate of relative entropy dissipation identity is given by
\begin{equation}\label{560}
\begin{aligned}
\lim_{t \downarrow t_0} \dfrac{\mathbb{E}_{\mathbb{P}}[\theta(t, X(t))| \mathcal{G}(t_0)] -\theta(t_0, X(t_0)) }{t-t_0}
= D(t_0, X(t_0)),
\end{aligned}
\end{equation}
where the limit exists in $L^1(\mathbb{P})$.
\end{cor}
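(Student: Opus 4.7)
The plan is to read off Corollary \ref{cor2} directly from the semimartingale decomposition of Theorem \ref{solution0}. For $t \geq t_0$,
\begin{equation*}
\theta(t,X(t))-\theta(t_0,X(t_0)) = \bigl(M(t)-M(t_0)\bigr) + \int_{t_0}^{t} D(s,X(s))\,\rmd s .
\end{equation*}
Conditioning on $\mathcal{G}(t_0)$ and using that $M$ is an $L^2(\mathbb{P})$-bounded martingale kills the first increment, giving
\begin{equation*}
\mathbb{E}_{\mathbb{P}}\bigl[\theta(t,X(t))\,\big|\,\mathcal{G}(t_0)\bigr] - \theta(t_0,X(t_0)) = \mathbb{E}_{\mathbb{P}}\!\left[\int_{t_0}^{t} D(s,X(s))\,\rmd s\,\Big|\,\mathcal{G}(t_0)\right].
\end{equation*}
After dividing by $t-t_0$, the problem reduces to showing that the conditional time-average of $D(\cdot,X(\cdot))$ on the right converges in $L^{1}(\mathbb{P})$ to $D(t_0,X(t_0))$ as $t\downarrow t_0$.

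By conditional Fubini, the quotient equals $(t-t_0)^{-1}\!\int_{t_0}^{t}\mathbb{E}_{\mathbb{P}}[D(s,X(s))\,|\,\mathcal{G}(t_0)]\,\rmd s$. The smoothness of $p$ (Assumption \ref{con} \ref{smooth}), together with the smoothness of $\Phi$, $f$, $b$ and the continuity of the sample paths of $X$, makes $s\mapsto D(s,X(s))$ almost surely continuous at $s=t_0$. Path-by-path Lebesgue differentiation therefore yields $(t-t_0)^{-1}\!\int_{t_0}^{t}D(s,X(s))\,\rmd s \to D(t_0,X(t_0))$ $\mathbb{P}$-almost surely. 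Since conditional expectation is an $L^{1}$-contraction and since $D(t_0,X(t_0))$ is itself $\mathcal{G}(t_0)$-measurable (so coincides with its conditional expectation), upgrading the almost-sure limit to an $L^{1}(\mathbb{P})$ limit of the averages is all that remains.

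The main obstacle is exactly this $L^{1}$ upgrade, for which I would assemble a time-uniform dominating function near $t_0$. Unpacking the three summands in $D(s,X(s))$, every factor is controlled by ingredients already available in the paper: boundedness of $p$ (Proposition \ref{P2}), the finite second moment $P(t)\in\mathcal{P}_{2}(\R^{d})$, the space-time square integrability of $\nabla p/p$ (Lemma \ref{integrability}), the two-sided bounds on $f'$ and $b$ in Assumption \ref{con} \ref{f}--\ref{b}, and the growth bound $|\nabla\Phi(x)|\leq C|x|$ for $|x|>R$. Cauchy--Schwarz and Young inequalities then produce a majorant of the form $\kappa\bigl(1+|X(s)|^{2}+|\nabla\rho(X(s))/\rho(X(s))|^{2}\bigr)$ belonging to $L^{1}(\rmd s\otimes\rmd\mathbb{P})$ on $[t_0,t_0+\delta]\times\Omega$, and conditional dominated convergence concludes the proof. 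The same domination also justifies the interchange of conditional expectation and time-integral invoked above.
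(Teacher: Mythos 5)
Your overall skeleton is the same as the paper's: semimartingale decomposition from Theorem \ref{solution0}, conditioning on $\mathcal{G}(t_0)$ to kill the martingale increment, almost-sure convergence of the time averages $(t-t_0)^{-1}\int_{t_0}^{t}D(s,X(s))\,\rmd s$ by path continuity, and the reduction of the conditional statement to the unconditional one via the $L^1$-contraction property together with $\mathcal{G}(t_0)$-measurability of $D(t_0,X(t_0))$ (this is exactly the paper's tower-property step). The one place you diverge is the $L^1$-upgrade, and that is where your argument has a genuine gap. For (conditional) dominated convergence applied to the family of time averages as $t\downarrow t_0$, you need a single $\mathbb{P}$-integrable random variable dominating all of these averages for $t$ in a neighbourhood of $t_0$, e.g.\ $\sup_{t_0\le s\le t_0+\delta}|D(s,X(s))|$. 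A pointwise majorant $\kappa\bigl(1+|X(s)|^{2}+|\nabla\rho(X(s))/\rho(X(s))|^{2}\bigr)$ that is merely in $L^1(\rmd s\otimes\rmd\mathbb{P})$ does not provide this: the average $(t-t_0)^{-1}\int_{t_0}^{t}\mathbb{E}[G(s)]\,\rmd s$ of a function that is only $\rmd s$-integrable can diverge as $t\downarrow t_0$. Moreover, the ingredients you invoke control the problematic term only after integration in time: Lemma \ref{integrability} gives $\int_0^T\!\!\int_{\R^d}|\nabla p|^2/p\,\rmd x\,\rmd t<\infty$, which says nothing about finiteness of $\mathbb{E}\bigl[|\nabla\rho(X(s))/\rho(X(s))|^{2}\bigr]$ at each fixed $s$ near $t_0$, let alone uniformly in $s$. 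So the sentence ``conditional dominated convergence concludes the proof'' asserts precisely the step that is not justified.

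The paper closes this hole by a different device: having shown $\mathbb{E}_{\mathbb{P}}[|D|]<\infty$, it obtains convergence of the expectations of the time averages to $\mathbb{E}_{\mathbb{P}}[D(t_0,X(t_0))]$ via the Lebesgue differentiation theorem applied to $s\mapsto\mathbb{E}_{\mathbb{P}}[D(s,X(s))]$, and then combines this with the almost-sure convergence through Scheff\'e's lemma to get $L^1(\mathbb{P})$-convergence of the averages; your contraction step then finishes the proof as you wrote. To salvage your route you would have to either upgrade the majorant to something genuinely $t$-uniform (prove $\sup_{s\in[t_0,t_0+\delta]}|D(s,X(s))|\in L^1(\mathbb{P})$, which the available estimates do not yield), or replace dominated convergence by a uniform-integrability/Vitali argument or by the Scheff\'e argument of the paper.
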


By averaging the trajectorial results, we obtain the following energy dissipation identity.

\begin{cor}\label{cor1}
Suppose Assumption \ref{con} holds. Then the relative entropy dissipation is given by
\begin{equation}\label{cor3}
\dfrac{\rmd }{\rmd t}H_g(P(t)|P_{\infty})=-I_g(P(t)|P_{\infty}).
\end{equation}
\end{cor}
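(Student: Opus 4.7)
The plan is to deduce Corollary \ref{cor1} as a direct consequence of Theorem \ref{solution0}, by taking expectations in the semimartingale decomposition of the energy process and then identifying the resulting integrand with the relative modified Fisher information.

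First I would reduce the claim to a statement about $\mathcal{F}$. Recall from the preceding discussion that $H_g(P(t)|P_\infty) = \mathcal{F}(p(t,\cdot)) - \mathcal{F}(p_\infty)$, and the bound $\mathcal{F}(p_\infty) > -\infty$ was already established. Since $\mathcal{F}(p_\infty)$ is independent of $t$, it suffices to prove
\begin{equation*}
\dfrac{\rmd}{\rmd t}\mathcal{F}(p(t,\cdot)) = -I_g(P(t)|P_\infty).
\end{equation*}
Writing $\mathcal{F}(p(t,\cdot)) = \mathbb{E}_{\mathbb{P}}[\theta(t,X(t))]$ and applying Theorem \ref{solution0}, the semimartingale decomposition $\theta(t,X(t)) - \theta(0,X(0)) = M(t) + F(t)$ gives, upon taking expectations (using that $M$ is an $L^2(\mathbb{P})$-bounded martingale so $\mathbb{E}_{\mathbb{P}}[M(t)] = 0$),
\begin{equation*}
\mathcal{F}(p(t,\cdot)) - \mathcal{F}(p_0) = \mathbb{E}_{\mathbb{P}}[F(t)] = -\int_0^t \mathcal{I}(p(s,\cdot))\,\rmd s.
\end{equation*}
Differentiating both sides in $t$ (continuity of the integrand $s \mapsto \mathcal{I}(p(s,\cdot))$ follows from the boundedness and smoothness of $p$ given by Assumption \ref{con} together with Proposition \ref{P2} and Lemma \ref{integrability}) yields $\frac{\rmd}{\rmd t}\mathcal{F}(p(t,\cdot)) = -\mathcal{I}(p(t,\cdot))$.

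It remains to identify $\mathcal{I}(p(t,\cdot))$ with $I_g(P(t)|P_\infty)$. By construction $\eta'(r) = g(r)$, and the stationary density satisfies $g(p_\infty) = -\Phi + c$, so
\begin{equation*}
\nabla\bigl(\eta'(p) - \eta'(p_\infty)\bigr) = \nabla g(p) - \nabla g(p_\infty) = \nabla\bigl(g(p) + \Phi\bigr).
\end{equation*}
Inserting this into Definition \ref{Fisherinformation} and using $\rmd\hat{h}(P(t)) = h(p)\,\rmd x = p\,b(p)\,\rmd x$, we obtain
\begin{equation*}
I_g(P(t)|P_\infty) = \int_{\R^d} \bigl|\nabla(g(p) + \Phi)\bigr|^2\, b(p)\, p\,\rmd x = \mathcal{I}(p(t,\cdot)),
\end{equation*}
which combined with the previous step proves the corollary. (Alternatively, one can take $\mathbb{P}$-expectation directly in Corollary \ref{cor2}; the $L^1(\mathbb{P})$ convergence there permits interchange of limit and expectation, leading to the same conclusion via $\mathbb{E}_{\mathbb{P}}[D(t,X(t))] = -\mathcal{I}(p(t,\cdot))$, which is exactly the content of the last displayed equation in Theorem \ref{solution0} in differentiated form.)

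The main subtlety, which is already packaged inside Theorem \ref{solution0}, is justifying the integration-by-parts that turns $\mathbb{E}_{\mathbb{P}}[D(t,X(t))] = \int D(t,x)p(t,x)\,\rmd x$ into $-\mathcal{I}(p(t,\cdot))$; this requires the logarithmic-gradient integrability of Lemma \ref{integrability} together with the decay estimates underpinning Proposition \ref{P2}. Once Theorem \ref{solution0} is invoked as a black box, however, the corollary reduces to an elementary identification of two expressions for the dissipation rate, so no further analytic work is needed.
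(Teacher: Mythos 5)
Your proposal is correct and follows essentially the same route as the paper: the paper obtains Corollary \ref{cor1} as the $\beta=0$ case of the perturbed result, whose proof likewise takes expectations in the semimartingale decomposition of Theorem \ref{solution0}, uses that the $L^2$-bounded martingale has vanishing expectation, and applies the Lebesgue differentiation theorem. Your explicit identification $\mathcal{I}(p(t,\cdot))=I_g(P(t)|P_\infty)$ via $\eta'=g$ and $g(p_\infty)=-\Phi+c$ is exactly the step the paper leaves implicit, so no gap remains.
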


\begin{rem}
(i) Note that \eqref{cor3} corresponds to the energy dissipation
\begin{equation}\label{edi20}
\begin{aligned}
 \dfrac{\rmd }{\rmd t}\mathcal{F}(p(t,x))= -\mathcal{I}(p(t,x)).
\end{aligned}
\end{equation}
(ii) In \cite[Proposition 4.5]{KST}, Karatzas et al. provided an elegant expression for the rate $D$, i.e. $D(t,x)=-\Big| \frac{\nabla p(t,x)}{p(t,x)}+x \Big|^2$,
for classical SDEs with additive noise, by reversing the time of solution processes. \\
(iii) If the McKean-Vlasov SDE reduces to a classical SDE, our results lead to the expression $D(t,x)=\frac{\Delta p(t,x)}{p(t,x)}+2-\frac{|\nabla p(t,x)|^2}{p(t,x)}-|x|^2$. But in fact, the energy dissipation identities for both cases coincide, with $\mathcal{I}(p)=\int_{\R^d} \big| \frac{\nabla p}{p}+x \big|^2 p \rmd x$. Moreover, our results apply to McKean-Vlasov SDEs with multiplicative noise as well.
\end{rem}

Finally, we compute the energy dissipation identity for the perturbed curve.
To perturb the potential field $\Phi$, we introduce a vector field $\beta$ that satisfies the following assumption.

\begin{ass}\label{perturbed}
The potential field $\beta \in C_c^{\infty}(\R^d, \R^d)$ and $\Phi^{\beta}:=\Phi+\beta$ satisfies Assumption \ref{con}.
\end{ass}

Let us fix $t_0\in [0, T)$.  By replacing $\Phi$ with $\Phi^{\beta}$, we obtain the McKean-Vlasov SDE
\begin{equation*}\label{MVSDE2}
\begin{aligned}
\rmd X^{\beta}(t)&=-\nabla \Phi^{\beta}(X^{\beta}(t)) b\big(\rho(X^{\beta}(t)) \big) \rmd t + \sqrt{\frac{2f\big(\rho(X^{\beta}(t)) \big)}{\rho(X^{\beta}(t)) }} \rmd W^{\beta}(t), t_0\leq t\leq T,\\
X^{\beta}(t_0)&=X(t_0).
\end{aligned}
\end{equation*}
Similarly, the perturbed version of \eqref{NFPE} is given by
\begin{equation}\label{NFPE2}
\begin{aligned}
\partial_t p^{\beta}(t,x) &= {\rm div} \big(\nabla \Phi^{\beta}(x) b\big(p^{\beta}(t,x)\big) p^{\beta}(t,x)\big) + \Delta f\big(p^{\beta}(t,x)\big), (t,x)\in [t_0, T]\times \R^d, \\
p^{\beta}(t_0, x)&=p(t_0,x), x \in \R^d.
\end{aligned}
\end{equation}

Analogously to \eqref{MVSDE} and \eqref{NFPE}, we can provide the relevant definitions for the perturbed versions such as $P^{\beta}(t)$, $W^{\beta}(t)$ and $\theta^{\beta}$.
The semimartingale decomposition of $\theta^{\beta}(t,x)$ is given below.

\begin{thm}\label{solution}
Suppose Assumptions \ref{con}, \ref{perturbed} hold.
Then the energy process $\theta^{\beta}(t,X^{\beta}(t))$ admits the semimartingale decomposition
\begin{equation}\label{decomposition}
\theta^{\beta}(t, X^{\beta}(t)) -\theta^{\beta}(t_0, X^{\beta}(t_0))= M^{\beta}(t) +F^{\beta}(t), t_0\leq t\leq T.
\end{equation}
Here $M^{\beta}(t)$ is the $L^2({\mathbb{P^{\beta}}})$-bounded martingale
\begin{equation*}\label{Mt}
M^{\beta}(t)=\int_{t_0}^t \sqrt{\frac{2f\big(\rho(X^{\beta}(s)) \big)}{\rho(X^{\beta}(s)) }} \nabla \theta^{\beta}(s, X^{\beta}(s)) \rmd W^{\beta}(s)
\end{equation*}
and
\begin{equation}\label{Ft}
\begin{aligned}
F^{\beta}(t)=&\int_{t_0}^t D^{\beta}(s, X^{\beta}(s)) \rmd s \\
:=&\int_{t_0}^t \Big[\phi'\big(\rho(X^{\beta}(s)) \big)\cdot {\rm div} \Big(\nabla\Phi^{\beta}(X^{\beta}(s)) b\big(\rho(X^{\beta}(s)) \big)\rho(X^{\beta}(s)) +\nabla f\big(\rho(X^{\beta}(s)) \big)\Big) \\
 &+\Delta \theta^{\beta} (s, X^{\beta}(s)) \dfrac{f\big(\rho(X^{\beta}(s)) \big)}{\rho(X^{\beta}(s)) }- \Big\langle \nabla \theta^{\beta}(s, X^{\beta}(s)), \nabla\Phi^{\beta}(X^{\beta}(s)) b\big(\rho(X^{\beta}(s)) \big)\Big\rangle \Big] \rmd s
\end{aligned}
\end{equation}
satisfies
\begin{equation*}\label{Ft2}
\begin{aligned}
\mathbb{E}_{\mathbb{P^{\beta}}} [F^{\beta}(t)] =&
-\int_{t_0}^t \mathcal{I}(p^{\beta}(s,x))\rmd s
 -\int_{t_0}^t \mathbb{E}_{\mathbb{P^{\beta}}}\Big[ \big|\nabla \beta(X^{\beta}(s))\big|^2b\big(\rho(X^{\beta}(s)) \big) \\ &
 +2\Big\langle\nabla\big(g\big(\rho(X^{\beta}(s)) \big)+\Phi(X^{\beta}(s))\big), \nabla \beta(X^{\beta}(s)) b\big(\rho(X^{\beta}(s)) \big) \Big \rangle
 \Big] \rmd s
>-\infty.
\end{aligned}
\end{equation*}
\end{thm}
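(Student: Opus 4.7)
The plan is to apply It\^{o}'s formula to the smooth space-time function $\theta^{\beta}(t,x) = \phi(p^{\beta}(t,x)) + \Phi^{\beta}(x)$ along $X^{\beta}$, then substitute $\partial_t p^{\beta}$ from the perturbed Fokker-Planck equation \eqref{NFPE2}. Since the diffusion coefficient of $X^{\beta}$ is $\sqrt{2f(\rho)/\rho}\, I$, It\^{o}'s formula produces the stochastic integral $M^{\beta}$ together with a drift integrand equal to $\partial_t\theta^{\beta} - \langle\nabla\theta^{\beta},\nabla\Phi^{\beta} b(\rho)\rangle + \tfrac{f(\rho)}{\rho}\Delta\theta^{\beta}$. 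Writing $\partial_t\theta^{\beta} = \phi'(p^{\beta})\,\partial_t p^{\beta}$ and invoking \eqref{NFPE2} to express $\partial_t p^{\beta}$ as ${\rm div}(\nabla\Phi^{\beta} b(p^{\beta})p^{\beta} + \nabla f(p^{\beta}))$ matches this integrand with $D^{\beta}$ from \eqref{Ft}, establishing the decomposition \eqref{decomposition}.

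For the $L^2(\mathbb{P}^{\beta})$-boundedness of $M^{\beta}$, I would control $\mathbb{E}_{\mathbb{P}^{\beta}}[\int_{t_0}^T \tfrac{f(\rho)}{\rho}|\nabla\theta^{\beta}|^2 \rmd s]$ by splitting $|\nabla\theta^{\beta}|^2 \leq 2|\phi'(p^{\beta})|^2|\nabla p^{\beta}|^2 + 2|\nabla\Phi^{\beta}|^2$. Under Assumption \ref{con} \ref{f}, \ref{b} and the boundedness of $p^{\beta}$ from Proposition \ref{P2}, $f(p)/p$ is bounded by $\gamma_2$ and $|\phi'(p)|^2 p$ stays bounded on the support of $p^{\beta}$, so the first piece is dominated by a constant multiple of $|\nabla p^{\beta}|^2/p^{\beta}$, whose integral on $[t_0,T]\times\R^d$ is finite by Lemma \ref{integrability}. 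The $|\nabla\Phi^{\beta}|^2$ piece is handled using Assumption \ref{con} \ref{phi}, $\beta\in C_c^{\infty}$, and $P^{\beta}(t)\in\mathcal{P}_2(\R^d)$.

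The heart of the proof is the expectation formula. Since $M^{\beta}$ is a true martingale, $\mathbb{E}_{\mathbb{P}^{\beta}}[F^{\beta}(t)] = \mathbb{E}_{\mathbb{P}^{\beta}}[\theta^{\beta}(t,X^{\beta}(t))] - \mathbb{E}_{\mathbb{P}^{\beta}}[\theta^{\beta}(t_0,X^{\beta}(t_0))]$. I would split $\mathbb{E}_{\mathbb{P}^{\beta}}[\theta^{\beta}(t,X^{\beta}(t))] = \int \eta(p^{\beta}) \rmd x + \int \Phi\, p^{\beta} \rmd x + \int \beta\, p^{\beta} \rmd x$, differentiate each piece in $t$ using \eqref{NFPE2}, integrate by parts, and invoke the key identity
\begin{equation*}
\nabla\Phi^{\beta}\, b(p^{\beta})\, p^{\beta} + \nabla f(p^{\beta}) \;=\; p^{\beta} b(p^{\beta})\,\nabla\big(g(p^{\beta}) + \Phi^{\beta}\big),
\end{equation*}
which follows from $g'(s) = f'(s)/(sb(s))$. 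The sum of the first two pieces yields $-\mathcal{I}(p^{\beta}) - \mathbb{E}_{\mathbb{P}^{\beta}}[\langle\nabla(g(\rho)+\Phi),\nabla\beta\rangle b(\rho)]$, and differentiating $\int\beta p^{\beta}\,\rmd x$ produces a second copy of that same cross term together with $-\mathbb{E}_{\mathbb{P}^{\beta}}[|\nabla\beta|^2 b(\rho)]$; integrating over $[t_0,t]$ reproduces the stated formula, and the factor of $2$ on the cross term arises exactly from these two separate contributions.

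The main technical obstacle is justifying the integration by parts and the differentiation under the spatial integral: the vanishing of boundary terms at infinity is supplied by the compact support of $\nabla\beta$, the finite second moment of $p^{\beta}$ (Proposition \ref{P2}), and the square integrability of $\nabla p^{\beta}/\sqrt{p^{\beta}}$ (Lemma \ref{integrability}), together with Assumption \ref{con} \ref{phi}. These same ingredients also bound $\int_{t_0}^t\mathcal{I}(p^{\beta})\,\rmd s$ after expanding $|\nabla(g(p^{\beta})+\Phi)|^2$ and using the $f', b$ bounds, yielding the finiteness $\mathbb{E}_{\mathbb{P}^{\beta}}[F^{\beta}(t)] > -\infty$.
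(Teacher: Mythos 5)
Your argument is correct, and its first two steps coincide with the paper's: It\^o's formula applied to $\theta^{\beta}$ along $X^{\beta}$ with $\partial_t p^{\beta}$ substituted from \eqref{NFPE2} gives \eqref{decomposition}--\eqref{Ft}, and the $L^2(\mathbb{P}^{\beta})$-bound on $M^{\beta}$ follows from splitting $|\nabla\theta^{\beta}|^2$ and invoking Lemma \ref{integrability}, Proposition \ref{P2} and Assumption \ref{con}. Where you genuinely differ is the expectation identity: the paper evaluates $\mathbb{E}_{\mathbb{P}^{\beta}}[D^{\beta}]$ directly by integrating by parts the three drift terms of \eqref{Ft} (its display \eqref{D123123}), whereas you first use the martingale property to reduce $\mathbb{E}_{\mathbb{P}^{\beta}}[F^{\beta}(t)]$ to the increment of $\int\big(\eta(p^{\beta})+\Phi^{\beta}p^{\beta}\big)\,\rmd x$ and then differentiate this Lyapunov functional along \eqref{NFPE2}. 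The computations are equivalent, but your bookkeeping is cleaner on one delicate point: you take $\theta^{\beta}=\phi(p^{\beta})+\Phi^{\beta}$, so the dissipation comes out as $-\int|\nabla(g(p^{\beta})+\Phi^{\beta})|^2 b(p^{\beta})p^{\beta}\,\rmd x$, and expanding the square produces exactly the stated $-\mathcal{I}(p^{\beta})-\mathbb{E}_{\mathbb{P}^{\beta}}\big[|\nabla\beta|^2 b+2\langle\nabla(g+\Phi),\nabla\beta\rangle b\big]$, i.e.\ your route accounts for the factor $2$ and the $|\nabla\beta|^2$ term. The paper's proof instead declares $\theta^{\beta}=\phi(p^{\beta})+\Phi$ (unperturbed potential), and its own computation \eqref{D123123} then yields the coefficient-one expression $-\mathcal{I}(p^{\beta})-\int\langle\nabla(g(p^{\beta})+\Phi),\nabla\beta\rangle b(p^{\beta})p^{\beta}\,\rmd x$, which is what resurfaces in \eqref{id2} and \eqref{edi2}; so your derivation matches the literal displayed formula for $\mathbb{E}_{\mathbb{P}^{\beta}}[F^{\beta}(t)]$ in the theorem, while the paper's matches its corollaries — a discrepancy internal to the paper, not a flaw of your argument. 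One small repair in your $M^{\beta}$ estimate: the quantity you need bounded is $|\phi'(p^{\beta})|\,p^{\beta}$ (indeed $|\phi'(p)|\le\gamma_2/(b_0 p)$, hence $f(p)|\phi'(p)|^2\le \gamma_2^3/(b_0^2\,p)$), not ``$|\phi'(p)|^2 p$'', which blows up as $p\downarrow 0$; with this correction your domination of the first piece by a constant multiple of $|\nabla p^{\beta}|^2/p^{\beta}$ stands.
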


\begin{proof}
According to $\theta^{\beta}(t,x)=\phi(p^{\beta}(t,x))+\Phi(x)$, we obtain
\begin{equation*}\label{50}
\begin{aligned}
\partial_t \theta^{\beta}(t,x) =\phi'(p^{\beta}(t,x)) \cdot {\rm div} \big(\nabla f(p^{\beta}(t,x))+ \nabla \Phi^{\beta}(x) b\big(p^{\beta}(t,x)\big) p^{\beta}(t,x)\big)
\end{aligned}
\end{equation*}
and
\begin{equation*}\label{51}
\begin{aligned}
\rmd  \theta^{\beta}(t, X^{\beta}(t))
=&\partial_t \theta^{\beta} \rmd t +\partial_x \theta^{\beta} \rmd X^{\beta}(t) +\Delta \theta^{\beta}\dfrac{f\big(\rho(X^{\beta}(t))\big)}{\rho(X^{\beta}(t))} \rmd t \\
= &\Big[\phi'\big(\rho(X^{\beta}(t))\big)\cdot {\rm div} \Big(\nabla\Phi^{\beta}(X^{\beta}(t)) b\big(\rho(X^{\beta}(t))\big)\rho(X^{\beta}(t))+\nabla f\big(\rho(X^{\beta}(t))\big)\Big)\\
& +\Delta \theta^{\beta} \frac{f\big(\rho(X^{\beta}(t))\big)}{\rho(X^{\beta}(t))} - \Big \langle \nabla \theta^{\beta}, \nabla\Phi^{\beta}(X^{\beta}(t)) b\big(\rho(X^{\beta}(t))\big )\Big\rangle\Big]\rmd t \\
& + \sqrt{\frac{2f\big(\rho(X^{\beta}(t))\big)}{\rho(X^{\beta}(t))}} \nabla \theta^{\beta}(t, X^{\beta}(t)) \rmd W^{\beta}(t),
\end{aligned}
\end{equation*}
which verifies \eqref{decomposition}.

Now we prove that  $M^{\beta}(t)$ is an $L^2({\mathbb{P^{\beta}}})$-bounded martingale. As
\begin{equation*}\label{52}
\begin{aligned}
 \nabla \theta^{\beta}(t,x)=\phi'(p^{\beta}) \nabla p^{\beta} +\nabla\Phi
 =\dfrac{\int_0^{p^{\beta}}\frac{f'(s)}{b(s)}ds}{(p^{\beta})^2} \nabla p^{\beta} +\nabla\Phi,
\end{aligned}
\end{equation*}
it follows that
\begin{equation}\label{53}
\begin{aligned}
|\nabla \theta^{\beta}|^2 \leq 2\dfrac{\gamma_2^2 |\nabla p^{\beta}|^2}{b_0^2 (p^{\beta})^2} +2|\nabla \Phi|^2.
\end{aligned}
\end{equation}
Substituting \eqref{53} into $\mathbb{E}_{\mathbb{P^{\beta}}}[\langle M^{\beta}(t), M^{\beta}(t)\rangle_T]$ yields
\begin{equation*}\label{54}
\begin{aligned}
 \mathbb{E}_{\mathbb{P^{\beta}}}[\langle M^{\beta}(t), M^{\beta}(t)\rangle_T]
= &\mathbb{E}_{\mathbb{P^{\beta}}} \Big[\int_0^T \dfrac{2f\big(\rho(X^{\beta}(t))\big)}{\rho(X^{\beta}(t))}|\nabla \theta^{\beta}|^2(t, X^{\beta}(t)) \rmd t\Big] \\
\leq & 4\mathbb{E}_{\mathbb{P^{\beta}}} \Big[\int_0^T  \Big(\dfrac{\gamma_2^3 |\nabla \rho(X^{\beta}(t))|^2}{b_0^2 \big(\rho(X^{\beta}(t))\big)^2} +\gamma_2 |\nabla \Phi(X^{\beta}(t))|^2\Big)\rmd t \Big] \\
= &\int_0^T \!\! \int_{\R^d} \Big(\dfrac{\gamma_2^3 |\nabla p^{\beta}|^2}{b_0^2 p^{\beta}} +\gamma_2 |\nabla \Phi|^2p^{\beta}\Big) \rmd x \rmd t.
\end{aligned}
\end{equation*}
Owing to Assumption \ref{con} \ref{phi} and Proposition \ref{P2},
\begin{equation*}\label{int1}
\int_0^T \!\! \int_{\R^d}|\nabla \Phi|^2 p^{\beta}\rmd x \rmd t<+\infty.
\end{equation*}
This calculation and Lemma \ref{integrability} imply
\begin{equation*}\label{55}
\begin{aligned}
\mathbb{E}_{\mathbb{P^{\beta}}}[\langle M^{\beta}(t), M^{\beta}(t)\rangle_T] <+\infty.
\end{aligned}
\end{equation*}

It remains to show that $\mathbb{E}_{\mathbb{P^{\beta}}} [F^{\beta}(t)]>-\infty$.
Let us define
\begin{equation*}\label{D123}
\begin{aligned}
\mathbb{E}_{\mathbb{P^{\beta}}} [F^{\beta}(t)]=\int_{t_0}^t \mathbb{E}_{\mathbb{P^{\beta}}}[D^{\beta}] \rmd t
:=\int_{t_0}^t \mathbb{E}_{\mathbb{P^{\beta}}}[D_1^{\beta}+ D_2^{\beta} +D_3^{\beta}] \rmd t,
\end{aligned}
\end{equation*}
where
\begin{equation*}\label{D1D2D3}
\begin{aligned}
& D_1^{\beta}(t, X^{\beta}(t))=\phi'\big(\rho(X^{\beta}(t))\big)\cdot {\rm div} \Big(\nabla\Phi^{\beta}(X^{\beta}(t)) b\big(\rho(X^{\beta}(t))\big)\rho(X^{\beta}(t))+\nabla f\big(\rho(X^{\beta}(t))\big)\Big), \\
& D_2^{\beta}(t, X^{\beta}(t))=\Delta \theta^{\beta} \dfrac{f\big(\rho(X^{\beta}(t))\big)}{\rho(X^{\beta}(t))}, \\
& D_3^{\beta}(t, X^{\beta}(t))=- \Big\langle \nabla \theta^{\beta}, \nabla\Phi^{\beta}(X^{\beta}(t)) b\big(\rho(X^{\beta}(t))\big)\Big\rangle.
\end{aligned}
\end{equation*}
By applying integration by parts, we obtain
\begin{equation}\label{D123123}
\begin{aligned}
\mathbb{E}_{\mathbb{P^{\beta}}}[D^{\beta}]
=& \mathbb{E}_{\mathbb{P^{\beta}}}[D_1^{\beta}+ D_2^{\beta} +D_3^{\beta}] \\
=&-\int_{\R^d} \nabla\big(\phi'(p^{\beta})p^{\beta}\big)\cdot [\nabla f(p^{\beta}) +\nabla\Phi^{\beta} b(p^{\beta})p^{\beta}] \rmd x \\
& -\int_{\R^d} \langle \nabla f(p^{\beta}), \nabla \theta^{\beta} \rangle \rmd x -\int_{\R^d} \langle \nabla \theta^{\beta}, \nabla \Phi^{\beta} b(p^{\beta}) p^{\beta} \rangle \rmd x \\
=&-\int_{\R^d} \Big\langle \dfrac{\nabla f(p^{\beta})}{b(p^{\beta}) p^{\beta}} +\nabla \Phi, \nabla f(p^{\beta}) +\nabla\Phi^{\beta} b(p^{\beta}) p^{\beta} \Big\rangle \rmd x \\
=&-\mathcal{I}(p^{\beta}) -\int_{\R^d} \Big\langle\nabla\big(g(p^{\beta})+\Phi\big), \nabla \beta b(p^{\beta})p^{\beta} \Big\rangle \rmd x.
\end{aligned}
\end{equation}
Using Proposition \ref{P2} and Lemma \ref{integrability}, we compute
\begin{equation}\label{int2}
\begin{aligned}
\mathcal{I}(p^{\beta}) &=\int_{\R^d} |\nabla g(p^{\beta})+\nabla \Phi|^2 b(p^{\beta}) p^{\beta} \rmd x \\
&\leq \dfrac{2 \gamma_2^2 b_1}{b_0^2}\int_{\R^d} \dfrac{|\nabla p^{\beta}|^2}{p^{\beta}} \rmd x +2b_1\int_{\R^d} |x|^2 p^{\beta} \rmd x<+ \infty.
\end{aligned}
\end{equation}
This calculation and Assumption \ref{perturbed} yield
\begin{equation*}
\begin{aligned}
\mathbb{E}_{\mathbb{P^{\beta}}} [F^{\beta}(t)]=& \int_{t_0}^t \mathbb{E}_{\mathbb{P^{\beta}}}[D^{\beta}] \rmd t \\
=&-\int_{t_0}^t \mathcal{I}(p^{\beta}(s,x))\rmd s -\int_{t_0}^t \mathbb{E}_{\mathbb{P^{\beta}}}\Big[ \big|\nabla \beta(X^{\beta}(s))\big|^2b\big(\rho(X^{\beta}(s)) \big) \\
& +2\Big\langle\nabla\big(g\big(\rho(X^{\beta}(s)) \big)+\Phi(X^{\beta}(s))\big), \nabla \beta(X^{\beta}(s)) b\big(\rho(X^{\beta}(s)) \big) \Big \rangle
 \Big] \rmd s
>-\infty.
\end{aligned}
\end{equation*}
\end{proof}

Similar to the unperturbed curve $(P(t))_{0\leq t\leq T}$ of \eqref{MVSDE}, we can establish the trajectorial rate of relative entropy dissipation and the energy dissipation identity as follows.

\begin{cor}
Suppose Assumptions \ref{con}, \ref{perturbed} hold. For all $t_0 \in [0,T)$, the trajectorial rate of relative entropy dissipation identity is given by
\begin{equation}\label{56}
\begin{aligned}
\lim_{t \downarrow t_0} \dfrac{\mathbb{E}_{\mathbb{P^{\beta}}}[\theta^{\beta}(t, X^{\beta}(t))| \mathcal{G}(t_0)] -\theta^{\beta}(t_0, X^{\beta}(t_0)) }{t-t_0}
= D^{\beta}(t_0, X^{\beta}(t_0)),
\end{aligned}
\end{equation}
where the limit exists in $L^1(\mathbb{P^{\beta}})$.
\end{cor}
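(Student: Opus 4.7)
The plan is to apply conditional expectation to the semimartingale decomposition \eqref{decomposition} provided by Theorem \ref{solution}, eliminate the martingale contribution, and then differentiate the resulting Lebesgue integral via the fundamental theorem of calculus.

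First I would conditionally expect both sides of
\[
\theta^{\beta}(t, X^{\beta}(t)) - \theta^{\beta}(t_0, X^{\beta}(t_0)) = M^{\beta}(t) + F^{\beta}(t)
\]
given $\mathcal{G}(t_0)$. Since $M^{\beta}$ is an $L^2(\mathbb{P}^{\beta})$-bounded martingale with $M^{\beta}(t_0) = 0$, its conditional expectation vanishes. The remaining piece is $F^{\beta}(t) = \int_{t_0}^{t} D^{\beta}(s, X^{\beta}(s))\,\rmd s$, and using the conditional Fubini theorem (legitimate by the integrability established in \eqref{D123123}--\eqref{int2} and Assumption \ref{con}\ref{phi}) I obtain
\[
\mathbb{E}_{\mathbb{P}^{\beta}}\bigl[\theta^{\beta}(t, X^{\beta}(t)) \,\big|\, \mathcal{G}(t_0)\bigr] - \theta^{\beta}(t_0, X^{\beta}(t_0))
= \int_{t_0}^{t} \mathbb{E}_{\mathbb{P}^{\beta}}\bigl[D^{\beta}(s, X^{\beta}(s)) \,\big|\, \mathcal{G}(t_0)\bigr] \rmd s.
\]

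Next, I would divide by $t - t_0$ and pass to the limit. Thanks to the smoothness of $p^{\beta}$ granted by Assumption \ref{con}\ref{smooth} and the pathwise continuity of $X^{\beta}$, the map $s \mapsto D^{\beta}(s, X^{\beta}(s))$ is continuous at $s = t_0$ almost surely, so the Lebesgue differentiation theorem gives the pointwise limit $D^{\beta}(t_0, X^{\beta}(t_0))$, which is $\mathcal{G}(t_0)$-measurable. This handles the almost sure statement; combined with the dominated convergence theorem (applied inside the conditional expectation) this delivers \eqref{56} pointwise.

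The main obstacle is upgrading this pointwise convergence to convergence in $L^1(\mathbb{P}^{\beta})$. For this I would exploit uniform integrability: bounding $|D^{\beta}|$ by quantities controlled by $|\nabla \Phi^{\beta}|^2 p^{\beta}$, $|\nabla p^{\beta}|^2 / p^{\beta}$ and $|\Delta p^{\beta}|$ under Assumption \ref{con}\ref{f}--\ref{phi}, the estimates \eqref{int1}, \eqref{int2} and Lemma \ref{integrability} show that $\{\mathbb{E}_{\mathbb{P}^{\beta}}|D^{\beta}(s, X^{\beta}(s))|\}_{s\in[t_0,T]}$ is integrable on any compact interval. Consequently the family of averaged conditional expectations is uniformly integrable, and the $L^1(\mathbb{P}^{\beta})$ convergence follows by Vitali's convergence theorem. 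Since Theorem \ref{solution} has already been proved above, the entire argument here is a conditional-expectation plus Lebesgue-differentiation postlude, precisely as in the unperturbed case of Corollary \ref{cor2}.
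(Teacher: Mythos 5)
Your route coincides with the paper's up to the last step: condition the decomposition \eqref{decomposition} on $\mathcal{G}(t_0)$ to remove the martingale (this is \eqref{57}), reduce \eqref{56} to an $L^1(\mathbb{P}^{\beta})$-limit for the normalized time averages of $D^{\beta}$, and transfer back through the tower property, conditional expectation being an $L^1$-contraction. The gap is in how you upgrade to $L^1$. You claim that, because $s\mapsto\mathbb{E}_{\mathbb{P}^{\beta}}\big[|D^{\beta}(s,X^{\beta}(s))|\big]$ is integrable on compact intervals, the family of averages $\frac{1}{t-t_0}\int_{t_0}^{t}D^{\beta}(s,X^{\beta}(s))\,\rmd s$ is uniformly integrable, so Vitali applies. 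That inference is not valid: time-integrability of the means does not control the \emph{normalized} averages as $t\downarrow t_0$. If, for instance, $\mathbb{E}_{\mathbb{P}^{\beta}}\big[|D^{\beta}(s,X^{\beta}(s))|\big]\sim (s-t_0)^{-1/2}$, the time integral is finite while $\frac{1}{t-t_0}\int_{t_0}^{t}\mathbb{E}_{\mathbb{P}^{\beta}}\big[|D^{\beta}(s,X^{\beta}(s))|\big]\rmd s\rightarrow\infty$, so the averages are not even bounded in $L^1$, let alone uniformly integrable. Uniform integrability of the averages would follow from uniform integrability of the family $\{D^{\beta}(s,X^{\beta}(s))\}_{s}$ near $t_0$ (averages of a UI family are UI, by de la Vall\'ee Poussin and Jensen), but the estimates you invoke (\eqref{int2}, Lemma \ref{integrability}, Assumption \ref{con}) only yield the time-integrated bound $\int_0^T\mathbb{E}_{\mathbb{P}^{\beta}}\big[|D^{\beta}|\big]\rmd s<\infty$, not a pointwise-in-$s$ or UI control. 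The auxiliary ``pointwise limit via conditional dominated convergence'' step suffers from the same defect (no integrable dominating variable is exhibited), though it is not needed for the $L^1$ statement being proved.

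The paper closes this step differently, and that is the natural repair: apply the Lebesgue differentiation theorem to the time-integrable scalar function $s\mapsto\mathbb{E}_{\mathbb{P}^{\beta}}\big[D^{\beta}(s,X^{\beta}(s))\big]$ to obtain convergence of the means, i.e. \eqref{59}, and then combine this with the almost-sure convergence of the averages (which you correctly get from continuity of $s\mapsto D^{\beta}(s,X^{\beta}(s))$ along the paths) via Scheff\'e's lemma to conclude $\big\|\frac{1}{t-t_0}\int_{t_0}^{t}D^{\beta}(s,X^{\beta}(s))\,\rmd s-D^{\beta}(t_0,X^{\beta}(t_0))\big\|_{L^1(\mathbb{P}^{\beta})}\rightarrow 0$; your tower-property step then delivers \eqref{56}. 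If you prefer to keep Vitali, you must actually prove uniform integrability of $\{D^{\beta}(s,X^{\beta}(s))\}_{s\in[t_0,t_0+\delta]}$, e.g. through a uniform $L^{1+\varepsilon}$ bound, which is not contained in the estimates you cite.
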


\begin{proof}
According to \eqref{decomposition} and \eqref{Ft}, we obtain
\begin{equation}\label{57}
\begin{aligned}
\mathbb{E}_{\mathbb{P^{\beta}}}[\theta^{\beta}(t, X^{\beta}(t))| \mathcal{G}(t_0)] -\theta^{\beta}(t_0, X^{\beta}(t_0))
=\mathbb{E}_{\mathbb{P^{\beta}}}\Big[\int_{t_0}^t D^{\beta}(s, X^{\beta}(s)) \rmd s \Big| \mathcal{G}(t_0) \Big].
\end{aligned}
\end{equation}
Similar to the proof of \eqref{D123123}, we have
\begin{equation*}\label{58}
\begin{aligned}
\mathbb{E}_{\mathbb{P^{\beta}}}[|D^{\beta}|]< +\infty
\end{aligned}
\end{equation*}
based on Assumptions \ref{con}, \ref{perturbed}, Proposition \ref{P2} and Lemma \ref{integrability}.
Thus, by the Lebesgue differentiation theorem, we obtain
\begin{equation}\label{59}
\begin{aligned}
\lim_{t \downarrow t_0} \mathbb{E}_{\mathbb{P^{\beta}}} \bigg[\dfrac{\int_{t_0}^t D^{\beta}(s, X^{\beta}(s)) \rmd s}{t-t_0} \bigg]
=\mathbb{E}_{\mathbb{P^{\beta}}} [D^{\beta}(t_0, X^{\beta}(t_0))].
\end{aligned}
\end{equation}
Furthermore, Scheff\'{e}'s lemma \cite[Theorem 5.10]{W} states that \eqref{59} is equivalent to
\begin{equation*}\label{60}
\begin{aligned}
\lim_{t \downarrow t_0} \bigg\| \dfrac{\int_{t_0}^t D^{\beta}(s, X^{\beta}(s)) \rmd s}{t-t_0} -D^{\beta}(t_0, X^{\beta}(t_0)) \bigg\|_{L^1(\mathbb{P^{\beta}})} =0.
\end{aligned}
\end{equation*}
The tower property $\mathbb{E}(X)=\mathbb{E}(\mathbb{E}(X|\mathcal{G}(t_0)))$ implies that
\begin{equation}\label{61}
\begin{aligned}
& \lim_{t \downarrow t_0} \bigg\| \mathbb{E}_{\mathbb{P^{\beta}}} \bigg[\dfrac{\int_{t_0}^t D^{\beta}(s, X^{\beta}(s)) \rmd s}{t-t_0} \bigg|\mathcal{G}(t_0) \bigg] -D^{\beta}(t_0, X^{\beta}(t_0)) \bigg\|_{L^1(\mathbb{P^{\beta}})} \\
= & \lim_{t \downarrow t_0} \bigg\| \dfrac{\int_{t_0}^t D^{\beta}(s, X^{\beta}(s)) \rmd s}{t-t_0} -D^{\beta}(t_0, X^{\beta}(t_0)) \bigg\|_{L^1(\mathbb{P^{\beta}})}=0.
\end{aligned}
\end{equation}
Employing \eqref{57} in \eqref{61} finishes the $L^1(\mathbb{P^{\beta}})$-convergence of \eqref{56}.
\end{proof}

\begin{cor}
Suppose Assumptions \ref{con}, \ref{perturbed} hold. Then the relative entropy dissipation is given by
\begin{equation}\label{id2}
\begin{aligned}
& \dfrac{\rmd }{\rmd t}H_g(P^{\beta}(t)|P_{\infty}) \\
=& -I_g(P^{\beta}(t)|P_{\infty}) -\mathbb{E}_{\mathbb{P^{\beta}}}\Big[ \Big\langle\nabla\Big(g\big(\rho(X^{\beta}(t))\big)+\Phi(X^{\beta}(t))\Big), \nabla \beta(X^{\beta}(t)) b\big(\rho(X^{\beta}(t))\big)  \Big\rangle \Big].
\end{aligned}
\end{equation}
\end{cor}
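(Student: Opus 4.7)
The plan is to obtain (\ref{id2}) by taking the $\mathbb{P}^\beta$-expectation of the trajectorial identity (\ref{56}) from the preceding corollary and invoking the semimartingale analysis already performed in the proof of Theorem \ref{solution}.

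First I would apply $\mathbb{E}_{\mathbb{P}^\beta}$ to both sides of (\ref{56}). Since the convergence there holds in $L^1(\mathbb{P}^\beta)$, expectation commutes with the limit, and the tower property collapses $\mathbb{E}_{\mathbb{P}^\beta}\big[\mathbb{E}_{\mathbb{P}^\beta}[\theta^\beta(t, X^\beta(t))|\mathcal{G}(t_0)]\big]$ to $\mathbb{E}_{\mathbb{P}^\beta}[\theta^\beta(t, X^\beta(t))]$. This yields
\begin{equation*}
\lim_{t \downarrow t_0} \frac{\mathbb{E}_{\mathbb{P}^\beta}[\theta^\beta(t, X^\beta(t))] - \mathbb{E}_{\mathbb{P}^\beta}[\theta^\beta(t_0, X^\beta(t_0))]}{t - t_0} = \mathbb{E}_{\mathbb{P}^\beta}[D^\beta(t_0, X^\beta(t_0))].
\end{equation*}
Since $\theta^\beta(t,x) = \phi(p^\beta(t,x)) + \Phi(x)$ with $\phi(u) = \eta(u)/u$, the expectation on the left equals $\int_{\R^d}[\eta(p^\beta) + \Phi p^\beta]\,\rmd x = \mathcal{F}(p^\beta(t,\cdot))$, and the identification $H_g(P^\beta(t)|P_\infty) = \mathcal{F}(p^\beta(t,\cdot)) - \mathcal{F}(p_\infty)$ recorded in Subsection \ref{introduction} turns the left-hand side into $\frac{\rmd}{\rmd t} H_g(P^\beta(t)|P_\infty)\big|_{t = t_0}$.

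Second, I would rewrite the right-hand side. The integration-by-parts computation (\ref{D123123}) in the proof of Theorem \ref{solution} has already established
\begin{equation*}
\mathbb{E}_{\mathbb{P}^\beta}[D^\beta(t_0, X^\beta(t_0))] = -\mathcal{I}(p^\beta(t_0,\cdot)) - \int_{\R^d} \big\langle \nabla(g(p^\beta) + \Phi), \nabla \beta\, b(p^\beta) p^\beta \big\rangle \rmd x,
\end{equation*}
the second integral being precisely the perturbation expectation displayed in (\ref{id2}). To finish, I would match $\mathcal{I}(p^\beta)$ with $I_g(P^\beta(t_0)|P_\infty)$: one has $\eta'(r) = g(r)$ by definition, while the stationarity identity $p_\infty = g^{-1}(-\Phi + c)$ from Assumption \ref{con} \ref{smooth} yields $\nabla \eta'(p_\infty) = -\nabla \Phi$, so $\nabla(\eta'(p^\beta) - \eta'(p_\infty)) = \nabla g(p^\beta) + \nabla \Phi$; combined with $\rmd \hat h(P^\beta) = b(p^\beta) p^\beta\,\rmd x$ from Definition \ref{Fisherinformation}, this gives exactly $I_g(P^\beta(t_0)|P_\infty) = \mathcal{I}(p^\beta(t_0,\cdot))$, completing the proof after renaming $t_0 \mapsto t$.

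The whole argument is essentially bookkeeping once Theorem \ref{solution} is in hand. The only potential obstacle is the swap of limit and $\mathbb{P}^\beta$-expectation, but the $L^1(\mathbb{P}^\beta)$-convergence in (\ref{56}) makes this immediate; all the integrabilities needed to carry the integration by parts through (in particular $\mathcal{I}(p^\beta) < +\infty$, verified in (\ref{int2})) have already been established in the proof of Theorem \ref{solution}, so nothing further is required.
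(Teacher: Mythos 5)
Your proposal is correct and follows essentially the same route as the paper: both reduce the identity to Theorem \ref{solution} by killing the martingale part in expectation, reuse the integration-by-parts computation \eqref{D123123} (equivalently the formula for $\mathbb{E}_{\mathbb{P^{\beta}}}[F^{\beta}(t)]$), and identify $\mathbb{E}_{\mathbb{P^{\beta}}}[\theta^{\beta}]$ with $H_g(P^{\beta}(t)|P_{\infty})$ up to the constant $\mathcal{F}(p_{\infty})$ and $\mathcal{I}(p^{\beta})$ with $I_g(P^{\beta}(t)|P_{\infty})$. The only cosmetic difference is that you take expectations of the already-differentiated trajectorial identity \eqref{56}, whereas the paper takes the expectation of the decomposition \eqref{decomposition} to get an integrated-in-time identity and then applies the Lebesgue differentiation theorem, which is the same bookkeeping in a different order.
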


\begin{proof}
Using the property of the expectation of the $L^2({\mathbb{P^{\beta}}})$-bounded martingale $M^{\beta}(t)$, we have
\begin{equation*}\label{id1}
\begin{aligned}
& H_g(P^{\beta}(t)|P_{\infty})-H_g(P^{\beta}(t_0)|P_{\infty}) \\
=&-\int_{t_0}^t I_g(P^{\beta}(s)|P_{\infty}) \rmd s -\int_{t_0}^t \mathbb{E}_{\mathbb{P^{\beta}}}\Big[ \Big\langle\nabla\Big(g\big(\rho(X^{\beta}(s))\big)+\Phi(X^{\beta}(s))\Big), \nabla \beta(X^{\beta}(s)) b\big(\rho(X^{\beta}(s))\big)  \Big \rangle \Big] \rmd s
\end{aligned}
\end{equation*}
by taking the expectation of \eqref{decomposition}.
Applying the Lebesgue differentiation theorem to $H_g(P^{\beta}(t)|P_{\infty})$, we conclude \eqref{id2}.
\end{proof}

\begin{rem}
Correspondingly, we have the energy dissipation
\begin{equation}\label{edi2}
\begin{aligned}
& \dfrac{\rmd }{\rmd t}\mathcal{F}(p^{\beta}(t,x)) \\
=& -\mathcal{I}(p^{\beta}(t,x))
 -\int_{\R^d} \Big\langle\nabla\big(g\big(p^{\beta}(s,x)\big)+\Phi(x)\big), \nabla \beta(x) b\big(p^{\beta}(s,x)\big)p^{\beta}(s,x) \Big \rangle \rmd x.
\end{aligned}
\end{equation}
\end{rem}

\subsection{Wasserstein metric derivative}\label{wh}

In this subsection, we first introduce the modified Wasserstein metric and its associated properties \cite{CLSS,DNS,LMS}, followed by the modified Wasserstein metric derivative.
Denote the vector field
\begin{equation}\label{v}
v^{\beta}(t,x):=-\nabla\big(g(p^{\beta}(t,x))+\Phi^{\beta}(x)\big),
\end{equation}
then the perturbed version of the nonlinear Fokker-Planck equation \eqref{NFPE2} can be written as
\begin{equation*}\label{ce}
\begin{aligned}
\partial_t p^{\beta}(t,x)&+{\rm div} \big(v^{\beta}(t,x)  h(p^{\beta}(t,x)) \big) =0, (t,x) \in [t_0, T] \times \R^d, \\
p^{\beta}(t_0, x)&=p(t_0, x), x\in \R^d.
\end{aligned}
\end{equation*}

Benamou-Brenier formula illustrates that the quadratic Wasserstein metric $W_2$ can be given via the continuity equation, that is
\begin{equation*}\label{quadratic}
\begin{aligned}
W_2^2(\bar{\mu}_0, \bar{\mu}_1)&:= \inf \Big\{ \int_0^1 \!\! \int_{\R^d} |v_t|^2 u_t \rmd x \rmd t \Big|  \partial_t \mu_t+ {\rm div}\big(v_t \mu_t \big)=0 \\ & \text{ holds in the distributional sence, }
\rmd \mu_t=u_t\rmd x, \mu_i=\bar{\mu}_i, i=0,1 \Big\}
\end{aligned}
\end{equation*}
for $\bar{\mu}_0, \bar{\mu}_1 \in \mathcal{P}_2(\R^d)$.

We now consider the impact of mobility and provide the definitions of the modified Wasserstein metric and the modified Wasserstein scalar product.

\begin{de}[Modified Wasserstein metric]\label{modified}
For an increasing concave function $h:[0, \infty) \rightarrow [0, \infty)$, given $\bar{\mu}_i \in \mathcal{P}_2(\R^d)$, $i=0,1$, we define {\em the modified Wasserstein metric} by
\begin{equation*}\label{metric}
W_h^2(\bar{\mu}_0, \bar{\mu}_1):=\inf \Big\{ \int_0^1 \!\! \int_{\R^d} |v_t|^2 h(u_t) \rmd x \rmd t \Big| (\mu, v) \in \mathcal{CE}(0,1), \rmd \mu_t=u_t\rmd x, \mu_i=\bar{\mu}_i, i=0,1 \Big\},
\end{equation*}
where $\mathcal{CE}(0,1)=\Big\{(\mu, v) \Big| \partial_t \mu_t+{\rm div} \big(v_t \hat{h}(\mu_t) \big) =0  \text{ holds in the distributional sence}\Big \}$.
\end{de}

\begin{de}[Modified Wasserstein scalar product]\label{scalar}
For two tangent vectors $s_1$, $s_2 : \R^d \rightarrow \R$, we define {\em the modified Wasserstein scalar product} at $\mu$ by
\begin{equation*}\label{product}
\langle s_1, s_2 \rangle_{\hat{h}(\mu)} :=\int_{\R} \hat{h}(\mu) \nabla \psi_1 \cdot \nabla \psi_2 \rmd x,
\end{equation*}
where $-{\rm div} \big(\hat{h}(\mu)\nabla \psi_i\big)=s_i$, $i=1,2$.
\end{de}

\begin{rem}
(i) If $\mu=u \rmd x$ and $u$ is uniformly bounded, then the condition of Definition \ref{modified} can be relaxed to require that $h$ is concave \cite{CLSS, LM}. \\
(ii) Since $W_h$ can take the value $+\infty$, the modified Wasserstein metric is a pseudo-metric. However, by imposing the conditions $\bar{\mu}_i \in \mathcal{P}_{ac, 2}(\R^d)$, $\bar{\mu}_i=u_i \rmd x$ and $0 \leq u_i \leq M$, $i=0,1$, the finiteness of the metric can be ensured \cite[Theorem 3]{LM}. \\
(iii) The norm of the tangent vector $s$ can be expressed as the tensor product
\begin{equation*}
\| s \|_{\hat{h}(\mu)}^2 =\int_{\R} \hat{h}(\mu) | \nabla \psi |^2 \rmd x,
\end{equation*}
where $-{\rm div} (\hat{h}(\mu)\nabla \psi)=s$.  Furthermore, the modified Wasserstein metric can be defined in terms of the norm of the tangent vector as follows:
$$W_h^2(\bar{\mu}_0, \bar{\mu}_1):=\inf \Big\{ \int_0^1 \bigg \| \dfrac{\partial \mu_t }{\partial t} \bigg \|^2_{\hat{h}(\mu_t)}  \rmd t \Big| (\mu, v) \in \mathcal{CE}(0,1) ,  \mu_i=\bar{\mu}_i, i=0,1 \Big\}.$$
\end{rem}

Similar to $W_2$, the Wasserstein metric derivative of an absolutely continuous curve with respect to $W_h$ can also be calculated \cite[Corollary 5.20, Theorem 5.21]{DNS}.

\begin{lem}\label{nabla}
Let $\mu_t$ is an absolutely continuous curve with respect to $W_h$, $\mu \ll \rmd x$ and $(\mu, v) \in \mathcal{CE}(0,T)$.
Suppose $h$ is increasing and concave, and that $\lim_{r \downarrow 0} h(r)=0$, $\lim_{r\uparrow +\infty}r^{-1}h(r)<\widetilde{M}$ for some constant $\widetilde{M}$.
If $\mu \in \mathcal{P}_2(\R^d)$, then
$$ \big| \mu'_{t_0} \big| := \lim_{t \rightarrow t_0} \dfrac{W_h\big(\mu_t, \mu_{t_0}\big)}{|t-t_0|}
=\Big( \int_{\R^d} |v_{t_0}|^2 \hat{h}(\mu_{t_0}) \rmd x \Big)^\frac{1}{2}, \quad  \mathcal{L}^1 \text{-a.e. } t_0\in (0,T)$$
is equivalent to
$v_{t_0}\in \overline{\{\nabla \zeta : \zeta \in C_c^{\infty}(\R^d) \}}^{L^2_{\hat{h}(\mu_{t_0})}(\R^d, \R^d)}=:{\rm Tan}_{\hat{h}(\mu_{t_0})}\mathcal{P}_{2}(\R^d)$.
\end{lem}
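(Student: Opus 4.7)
The plan is to follow the standard two-sided argument for metric derivatives of absolutely continuous curves in $(\mathcal{P}_2(\R^d),W_h)$, leaning on the references \cite{DNS,LMS} already cited. The statement packages two separate facts: (a) the formula $|\mu'_{t_0}|=\|v_{t_0}\|_{L^2_{\hat h(\mu_{t_0})}}$ is the maximal attainable equality, and (b) equality characterizes velocity fields that lie in the tangent cone. I will prove the upper bound first, then use orthogonal decomposition in $L^2_{\hat h(\mu_{t_0})}$ to obtain the equivalence.

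The plan for the upper bound is as follows. For $t_0<t$ in $(0,T)$, the restriction of $(\mu,v)$ to $[t_0,t]$, rescaled to the parameter interval $[0,1]$, is an admissible competitor in the definition of $W_h^2(\mu_{t_0},\mu_t)$ of Definition \ref{modified}. The growth assumption $\lim_{r\uparrow\infty}r^{-1}h(r)<\widetilde M$ together with $\mu_t\in\mathcal{P}_2(\R^d)$ guarantees that the competitor has finite action and the relaxation from the increasing-concave to the concave version of $W_h$ applies to densities under our standing $L^\infty$ bound. Hence
\[
W_h^2(\mu_{t_0},\mu_t)\;\le\;(t-t_0)\int_{t_0}^{t}\!\!\int_{\R^d}|v_s|^2\,\hat h(\mu_s)\,\rmd x\,\rmd s.
\]
Dividing by $(t-t_0)^2$ and invoking the Lebesgue differentiation theorem at a.e.\ $t_0$, one obtains $|\mu'_{t_0}|^2\le\int_{\R^d}|v_{t_0}|^2\hat h(\mu_{t_0})\,\rmd x$. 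The assumption $\lim_{r\downarrow 0}h(r)=0$ is needed here only to guarantee lower semicontinuity of the action functional, so the infimum in $W_h^2$ is actually attained along $W_h$-a.c.\ curves.

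For the characterization of equality, the plan is to exploit the orthogonal decomposition in the Hilbert space $L^2_{\hat h(\mu_{t_0})}(\R^d,\R^d)$ into the closed subspace $\mathrm{Tan}_{\hat h(\mu_{t_0})}\mathcal{P}_2(\R^d)=\overline{\{\nabla\zeta:\zeta\in C_c^\infty(\R^d)\}}$ and its orthogonal complement, which by a standard duality argument consists precisely of vector fields $w$ satisfying $\mathrm{div}(w\,\hat h(\mu_{t_0}))=0$ in the distributional sense. Writing $v_{t_0}=\tilde v_{t_0}+w_{t_0}$ with $\tilde v_{t_0}\in\mathrm{Tan}$ and $w_{t_0}$ divergence-free in the weighted sense, the pair $(\mu,\tilde v)$ still belongs to $\mathcal{CE}$ because adding $w$ changes nothing in the continuity equation. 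The Pythagorean identity then gives
\[
\int_{\R^d}|v_{t_0}|^2\hat h(\mu_{t_0})\,\rmd x
\;=\;\int_{\R^d}|\tilde v_{t_0}|^2\hat h(\mu_{t_0})\,\rmd x
\;+\;\int_{\R^d}|w_{t_0}|^2\hat h(\mu_{t_0})\,\rmd x.
\]
By the (general, metric-space) existence of a minimal-norm velocity $\tilde v$ realizing $|\mu'_{t_0}|$ in Lemma~\ref{nabla} style from \cite{DNS}, one has $|\mu'_{t_0}|^2=\|\tilde v_{t_0}\|^2$ at a.e.\ $t_0$. Therefore $\|v_{t_0}\|_{L^2_{\hat h(\mu_{t_0})}}=|\mu'_{t_0}|$ forces $w_{t_0}=0$, i.e.\ $v_{t_0}\in\mathrm{Tan}_{\hat h(\mu_{t_0})}\mathcal{P}_2(\R^d)$; conversely, if $v_{t_0}$ is already tangential, it coincides with the minimizer and equality holds.

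The main obstacle I anticipate is the identification of the orthogonal complement of $\mathrm{Tan}_{\hat h(\mu_{t_0})}\mathcal{P}_2(\R^d)$ with the weighted-divergence-free fields, because the weight $\hat h(\mu_{t_0})$ may degenerate on $\{u_{t_0}=0\}$ (since $h(0)=0$). Handling this requires either the admissibility condition for $\mu$ from \cite{DNS} (ensuring that $\hat h(\mu_{t_0})\ll\mu_{t_0}$ in the relevant sense) or a density argument approximating $\mu_{t_0}$ by strictly positive densities. Once this technical point is settled via the cited Corollary~5.20 and Theorem~5.21 of \cite{DNS}, the rest of the argument is formal.
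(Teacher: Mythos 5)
The paper does not actually prove Lemma \ref{nabla}: it is quoted from \cite[Corollary 5.20, Theorem 5.21]{DNS} (see the sentence immediately preceding the statement), so there is no internal proof to compare yours against. Your sketch reconstructs the standard argument behind those cited results: the restriction/rescaling competitor plus Lebesgue differentiation yields the upper bound $|\mu'_{t_0}|\le \big(\int_{\R^d}|v_{t_0}|^2\hat{h}(\mu_{t_0})\,\rmd x\big)^{1/2}$ at a.e.\ $t_0$, and the orthogonal decomposition of $v_{t_0}$ in $L^2_{\hat{h}(\mu_{t_0})}(\R^d,\R^d)$ into its projection onto ${\rm Tan}_{\hat{h}(\mu_{t_0})}\mathcal{P}_2(\R^d)$ plus a weighted-divergence-free remainder, combined with the Pythagorean identity, converts the equality case into the tangency condition; this is indeed how \cite{DNS} (and, for $W_2$, \cite{AGS}) proceed, and the identification of the orthogonal complement with the fields $w$ satisfying ${\rm div}(w\,\hat{h}(\mu_{t_0}))=0$ distributionally is correct by definition of the closure. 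The caveat is that the entire analytic content of the lemma sits in the step you cite rather than prove: the existence, for a.e.\ $t_0$, of a tangential velocity $\tilde v_{t_0}$ with $\|\tilde v_{t_0}\|_{L^2(\hat{h}(\mu_{t_0}))}=|\mu'_{t_0}|$ (the lower bound on the metric derivative), which in \cite{DNS} requires the lower-semicontinuity/duality machinery for the action functional with the possibly degenerate weight $\hat{h}(\mu_{t_0})$ — exactly the obstacle you flag at the end. So your proposal is a faithful expansion of the citation the paper itself relies on rather than a gap, but it is not more self-contained than the paper's treatment, and a fully rigorous version would simply reduce, as the paper does, to invoking \cite[Corollary 5.20, Theorem 5.21]{DNS}.
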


To satisfy the conditions of Lemma \ref{nabla}, we must impose the following assumption.

\begin{ass}\label{tangent}
We assume that
$$v(t, X(t))\in {\rm Tan}_{\hat{h}(P(t))}\mathcal{P}_{2}(\R^d),$$
where $v(t,x):=-\nabla \big ( g(p(t,x)) +\Phi(x) \big)$.
\end{ass}

\begin{thm}\label{derivative}
Suppose $h$ is increasing and concave, and Assumptions \ref{con}, \ref{perturbed}, \ref{tangent} hold. Then the Wasserstein metric derivative of the perturbed curve $t \mapsto P^{\beta}(t)$ is given by
\begin{equation}\label{wh2}
\begin{aligned}
\lim_{t \downarrow t_0} \dfrac{W_h\big(P^{\beta}(t), P^{\beta}(t_0)\big)}{t-t_0}
&= \big\| \nabla(g\big(\rho(X^{\beta}(t_0))\big)+\Phi^{\beta}(X(t_0))) \big\|_{L^2(\hat{h}(P^{\beta}(t_0)))} \\
&=\Big( \int_{\R^d} \big|\nabla\big(g(p^{\beta}(t_0,x))+\Phi^{\beta}(x)\big)\big|^2 h\big(p^{\beta}(t_0,x)\big) \rmd x \Big)^\frac{1}{2} \\
&= \big(\mathcal{I}(p^{\beta}(t_0))\big)^{\frac{1}{2}}.
\end{aligned}
\end{equation}
\end{thm}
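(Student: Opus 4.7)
The plan is to apply Lemma \ref{nabla} to the absolutely continuous curve $t \mapsto P^{\beta}(t)$ equipped with the velocity field $v^{\beta}$ of \eqref{v}. The argument naturally splits into four steps.

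First, I would recast the nonlinear Fokker--Planck equation \eqref{NFPE2} in continuity-equation form. Using the identity $g'(s)=f'(s)/(sb(s))$ introduced in Subsection \ref{introduction}, the chain rule gives $\nabla f(p^{\beta}) = h(p^{\beta})\nabla g(p^{\beta})$, so \eqref{NFPE2} is equivalent to $\partial_t p^{\beta} + {\rm div}(v^{\beta} h(p^{\beta})) = 0$, i.e.\ $(P^{\beta}, v^{\beta}) \in \mathcal{CE}(t_0, T)$. The associated action equals $\int_{t_0}^T \mathcal{I}(p^{\beta}(s))\,\rmd s$, which the bound \eqref{int2}, Lemma \ref{integrability} applied to $p^{\beta}$, and Proposition \ref{P2} render finite; this yields absolute continuity of $t\mapsto P^{\beta}(t)$ with respect to $W_h$ in standard Benamou--Brenier fashion.

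Second, I would verify the remaining hypotheses of Lemma \ref{nabla}. Monotonicity and concavity of $h$ are assumed outright. Assumption \ref{con}\ref{b} gives $\lim_{r\downarrow 0}h(r)=\lim_{r\downarrow 0} r\,b(r)=0$ and $\lim_{r\uparrow +\infty} r^{-1}h(r)=\lim_{r\uparrow +\infty} b(r)\leq b_1$, so one may take $\widetilde{M}=b_1+1$. Proposition \ref{P2}, applied with $\Phi^{\beta}$ in place of $\Phi$ (legitimate by Assumption \ref{perturbed}), gives $P^{\beta}(t)\in \mathcal{P}_2(\R^d)$.

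Third, I would transfer the tangent-space condition from the unperturbed to the perturbed setting. By Assumption \ref{tangent}, $v(t_0,\cdot)=-\nabla(g(p(t_0,\cdot))+\Phi)\in {\rm Tan}_{\hat{h}(P(t_0))}\mathcal{P}_2(\R^d)$. Since $\beta\in C_c^{\infty}(\R^d)$, the field $-\nabla\beta$ belongs to $\{\nabla\zeta:\zeta\in C_c^{\infty}(\R^d)\}$ trivially, hence to the tangent space as well. The initial condition $p^{\beta}(t_0,\cdot)=p(t_0,\cdot)$ ensures $\hat{h}(P^{\beta}(t_0))=\hat{h}(P(t_0))$, so the decomposition $v^{\beta}(t_0)=v(t_0)-\nabla\beta$ places $v^{\beta}(t_0,\cdot)$ in ${\rm Tan}_{\hat{h}(P^{\beta}(t_0))}\mathcal{P}_2(\R^d)$.

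Finally, Lemma \ref{nabla} would yield
$$\lim_{t\downarrow t_0}\dfrac{W_h(P^{\beta}(t),P^{\beta}(t_0))}{t-t_0} = \left(\int_{\R^d} |v^{\beta}(t_0,x)|^2 h(p^{\beta}(t_0,x))\,\rmd x\right)^{1/2},$$
and the remaining two expressions in \eqref{wh2} follow by substituting the definition of $v^{\beta}$ and recognizing the resulting integral as $\mathcal{I}(p^{\beta}(t_0))$. The most delicate step is the third one: the perturbation has to enter the tangent space compatibly with the (possibly degenerate) weight $\hat{h}(P^{\beta}(t_0))$, and the compact support of $\beta$ is precisely what makes the decomposition $v^{\beta}=v-\nabla\beta$ work without further density arguments.
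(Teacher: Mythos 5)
Your proposal is correct and follows essentially the same route as the paper: establish absolute continuity of $t\mapsto P^{\beta}(t)$ from finiteness of the action via \eqref{int2} (with Proposition \ref{P2}, Lemma \ref{integrability} and Assumption \ref{perturbed}), check $\lim_{r\downarrow 0}h(r)=0$ and $\lim_{r\uparrow+\infty}r^{-1}h(r)<\widetilde{M}$ using Assumption \ref{con}\ref{b}, and then apply Lemma \ref{nabla} with the velocity field $v^{\beta}$ of \eqref{v}. Your third step, verifying that $v^{\beta}(t_0,\cdot)=v(t_0,\cdot)-\nabla\beta$ lies in ${\rm Tan}_{\hat{h}(P^{\beta}(t_0))}\mathcal{P}_2(\R^d)$ because $p^{\beta}(t_0,\cdot)=p(t_0,\cdot)$ and $\nabla\beta$ is trivially a gradient of a compactly supported smooth function, is a correct and welcome elaboration of a point the paper's proof leaves implicit (it invokes Assumption \ref{tangent} only for the unperturbed field).
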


\begin{proof}
In view of \eqref{int2} and Assumption \ref{perturbed}, we conclude
$$\int_0^T \big\| \nabla(g\big(\rho(X^{\beta}(t))\big)+\Phi^{\beta}(X(t))) \big\|_{L^2(\hat{h}(P^{\beta}(t)))} \rmd t <+\infty.$$
Therefore, the curve $t \mapsto P^{\beta}(t)$ is absolutely continuous with respect to $W_h$ \cite[Def 1.1.1]{AGS}.
It is obvious that $\lim_{r \downarrow 0} h(r)= \lim_{r \downarrow 0} rb(r)=0$.
As $b(s)\leq b_1$, it follows that  $\lim_{r\uparrow +\infty}r^{-1}h(r)<\widetilde{M}$ for some constant $\widetilde{M}$. Using \eqref{v} and Lemma \ref{nabla}, we can derive the Wasserstein metric derivative \eqref{wh2}.
\end{proof}

Let the potential field $\beta =0$, we can derive the unperturbed version of the Wasserstein metric derivative.

\begin{thm}\label{derivative0}
Suppose $h$ is increasing and concave, and Assumptions \ref{con}, \ref{tangent} hold. Then the Wasserstein metric derivative of the curve $t \mapsto P(t)$ is given by
\begin{equation}\label{wh20}
\begin{aligned}
\lim_{t \downarrow t_0} \dfrac{W_h\big(P(t), P(t_0)\big)}{t-t_0}
&= \big\| \nabla\big(g(\rho(X(t_0)))+\Phi(X(t_0))\big) \big\|_{L^2(\hat{h}(P(t_0)))} \\
&=\Big( \int_{\R^d} \big|\nabla\big(g(p(t_0,x))+\Phi(x)\big)\big|^2 h\big(p(t_0,x)\big) \rmd x \Big)^\frac{1}{2} \\
&= \big(\mathcal{I}(p(t_0))\big)^{\frac{1}{2}}.
\end{aligned}
\end{equation}
\end{thm}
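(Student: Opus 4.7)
The plan is to specialize the proof of Theorem \ref{derivative} to $\beta=0$; indeed Theorem \ref{derivative0} is precisely that specialization, so the same three-step scheme applies verbatim, and one simply observes that every estimate invoked there depends only on the bounds of Assumption \ref{con} and is uniform in the perturbation.

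First, I would recast the nonlinear Fokker--Planck equation \eqref{NFPE} as a continuity equation driven by $\hat{h}$. Setting $v(t,x):=-\nabla\bigl(g(p(t,x))+\Phi(x)\bigr)$ and using $g'(s)=f'(s)/(sb(s))$, one checks that
\[
v(t,x)\,h(p(t,x))=-\nabla f(p(t,x))-\nabla\Phi(x)\,b(p(t,x))\,p(t,x),
\]
so that \eqref{NFPE} reads $\partial_t p+\mathrm{div}\bigl(v\,\hat{h}(P)\bigr)=0$, i.e.\ $(P,v)\in\mathcal{CE}(0,T)$ in the sense of Definition \ref{modified}.

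Second, I would verify the hypotheses of Lemma \ref{nabla}. The estimate \eqref{int2} applied with $\beta=0$ (so that $p^{\beta}=p$ and $\Phi^{\beta}=\Phi$), combined with Proposition \ref{P2} and Lemma \ref{integrability}, gives $\mathcal{I}(p(t))<+\infty$ on $[0,T]$ and consequently
\[
\int_0^T\bigl\|\nabla\bigl(g(\rho(X(t)))+\Phi(X(t))\bigr)\bigr\|_{L^2(\hat{h}(P(t)))}\,\rmd t<+\infty,
\]
so $t\mapsto P(t)$ is absolutely continuous with respect to $W_h$ by \cite[Def 1.1.1]{AGS}. Moreover $h(r)=rb(r)$ satisfies $\lim_{r\downarrow 0}h(r)=0$ and $\lim_{r\uparrow\infty}r^{-1}h(r)\leq b_1$ by Assumption \ref{con}\ref{b}, while $P(t)\in\mathcal{P}_2(\R^d)$ by Proposition \ref{P2}.

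Third, Assumption \ref{tangent} provides exactly the tangent-space condition $v(t_0,\cdot)\in\mathrm{Tan}_{\hat{h}(P(t_0))}\mathcal{P}_2(\R^d)$ required by Lemma \ref{nabla}. Applying that lemma to the pair $(P,v)$ yields
\[
\lim_{t\downarrow t_0}\dfrac{W_h(P(t),P(t_0))}{t-t_0}=\Bigl(\int_{\R^d}|v(t_0,x)|^2\,h(p(t_0,x))\,\rmd x\Bigr)^{1/2},
\]
which is the middle expression of \eqref{wh20}; the first line follows by rewriting $\hat{h}(P(t_0))=h(p(t_0,\cdot))\,\rmd x$, and the third line is immediate from the definition of $\mathcal{I}$. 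There is no genuine obstacle in this proof: the only things worth double-checking are the algebraic identification of the continuity equation via $g'=f'/(sb)$ and the fact that \eqref{int2} was derived using bounds on $f,b,\Phi$ alone, so the $\beta=0$ substitution preserves finiteness of the Fisher-type integral.
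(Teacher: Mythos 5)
Your proposal is correct and follows essentially the same route as the paper: the paper proves Theorem \ref{derivative} by using \eqref{int2} to get absolute continuity of the curve, checking the conditions $\lim_{r\downarrow 0}h(r)=0$ and $\lim_{r\uparrow\infty}r^{-1}h(r)\leq b_1$, and invoking Assumption \ref{tangent} with Lemma \ref{nabla} via the velocity field \eqref{v}, and then obtains Theorem \ref{derivative0} by simply setting $\beta=0$. Your explicit verification that \eqref{NFPE} is the continuity equation for $v\,\hat{h}(P)$ via $g'(s)=f'(s)/(sb(s))$ is exactly the identification the paper makes implicitly through \eqref{v}.
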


\subsection{Gradient Flow}\label{gf}

Similar to the classical $W_2$-gradient flow result \cite{AGS}, we now provide the definition of the $W_h$-gradient flow. Based on this definition, we present the gradient flow result for the free energy functional $\mathcal{F}$. Here, we adopt the notation $\mathcal{F}(P(t)):=\mathcal{F}(p(t,x))$.

\begin{de}[Gradient flow]\label{defgf}
Let $\mathcal{F}:\mathcal{P}_2(\R^d) \rightarrow \R $ is a functional. Suppose that there exists a unique function $\mathbb{G}_{\bar{\mu}} :\R^d \rightarrow \R $ such that
\begin{equation}\label{gradient}
\Big \langle \mathbb{G}_{\bar{\mu}}, \dfrac{\partial \mu_t}{\partial t}\Big|_{t=0} \Big \rangle_{\hat{h}(\bar{\mu})} =\dfrac{\rmd }{\rmd t}\bigg|_{t=0} \mathcal{F}(\mu_t)
\end{equation}
for any smooth curve $\mu_t: (-t_0, t_0) \rightarrow \mathcal{P}_2(\R^d)$ with $\mu_0=\bar{\mu}$.
We then call
$${\rm grad}_{W_h} \mathcal{F}(\bar{\mu}):=\mathbb{G}_{\bar{\mu}}$$
the {\em gradient flow} of $\mathcal{F}$ with respect to $W_h$ at $\bar{\mu}$.
\end{de}

According to Definitions \ref{scalar} and \ref{defgf}, we can express the following form:
\begin{equation}\label{grad}
{\rm grad}_{W_h} \mathcal{F}(\mu_0)=-{\rm div}\Big(\hat{h}(\mu_0) \nabla\big(\dfrac{\delta \mathcal{F}(\mu_0) }{\delta \mu}\big)\Big),
\end{equation}
where $\dfrac{\delta \mathcal{F}(\mu_0) }{\delta \mu}$ is the first $L^2$-variation.
Indeed, if $\partial_t \mu_t={\rm div}\big(v_t \hat{h}(\mu_t)\big)$, then
\begin{equation*}
\begin{aligned}
\dfrac{\rmd}{\rmd t}\bigg|_{t=0}\mathcal{F}(\mu_t)
=&\int_{\R^d}\dfrac{\delta \mathcal{F}}{\delta \mu}(x) \dfrac{\partial \mu_t}{\partial t}\bigg|_{t=0} \rmd x \\
=&-\int_{\R^d}\dfrac{\delta \mathcal{F}}{\delta \mu}(x) {\rm div}(\hat{h}(\mu_0)v_t)\rmd x
=\int_{\R^d}\nabla\Big(\dfrac{\delta \mathcal{F}}{\delta \mu}(x)\Big) \cdot v_t \hat{h}(\mu_0) \rmd x.
\end{aligned}
\end{equation*}
In view of \eqref{product} and \eqref{gradient}, we have \eqref{grad}.

Now, we employ
\begin{equation*}\label{slope1}
\begin{aligned}
\big|\nabla_{W_h} \mathcal{F} \big|(P(t_0)):=\lim_{t \downarrow t_0} \dfrac{H_g(P(t)|P_{\infty}) -H_g(P(t_0)|P_{\infty})}{W_h\big(P(t), P(t_0)\big)}
\end{aligned}
\end{equation*}
to compute the Wasserstein metric slope of the free energy functional $\mathcal{F}$ along the curve $t \mapsto P(t)$.
Denote the $\R^d$-valued random vectors by
\begin{equation*}\label{a}
\mathbf{a}:=\nabla\big(g(\rho(X(t_0)))+\Phi(X(t_0))\big), \mathbf{b}:=\nabla \beta(X(t_0)).
\end{equation*}

\begin{thm}\label{main}
Suppose $h$ is increasing and concave, and Assumptions \ref{con}, \ref{perturbed}, \ref{tangent} hold.
Then the gradient flow of $\mathcal{F}$ with respect to $W_h$ is given by
\begin{equation}\label{form2}
{\rm grad}_{W_h} \mathcal{F}(P(t))=-{\rm div}\Big( h\big(\rho(X(t))\big) \big(\nabla g ( \rho(X(t)) ) +\nabla \Phi(X(t)) \big) \Big).
\end{equation}
\end{thm}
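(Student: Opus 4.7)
The plan is to invoke the general form of the $W_h$-gradient derived in the excerpt immediately preceding the statement, namely
\[
{\rm grad}_{W_h} \mathcal{F}(\mu_0) = -{\rm div}\Bigl(\hat{h}(\mu_0)\, \nabla\bigl(\tfrac{\delta \mathcal{F}(\mu_0)}{\delta \mu}\bigr)\Bigr),
\]
and then compute the first $L^2$-variation of $\mathcal{F}$ for our free energy $\mathcal{F}(p) = \int_{\R^d} \eta(p) + \Phi\, p\, \rmd x$. Since $\eta(r) = \int_0^r g(s)\, \rmd s$, we have $\eta'(r) = g(r)$, so $\tfrac{\delta\mathcal{F}}{\delta\mu}(x) = g(p(x)) + \Phi(x)$. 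Substituting into the gradient formula with $\mu_0 = P(t)$ and noting that $\hat{h}(P(t))$ has density $h(p(t,\cdot)) = h(\rho(X(t)))$, we obtain \eqref{form2} immediately.

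To make the identification internally consistent with the dissipation and metric-derivative results already established, I would verify \eqref{gradient} directly against the rich family of perturbed curves $t \mapsto P^{\beta}(t)$ from Assumption \ref{perturbed}. Using the continuity-equation form $\partial_t p^{\beta} + {\rm div}(h(p^{\beta})v^{\beta}) = 0$ with $v^{\beta} = -\nabla(g(p^{\beta}) + \Phi^{\beta})$ from \eqref{v}, the time-derivative $\partial_t \mu_t|_{t=t_0}$ corresponds, in the sense of Definition \ref{scalar}, to the potential $\psi_2$ with $\nabla \psi_2 = v^{\beta}(t_0) = -\nabla(g(p(t_0)) + \Phi) - \nabla\beta$, while the candidate gradient $\mathbb{G}_{P(t_0)}$ corresponds to $\nabla\psi_1 = \nabla(g(p(t_0)) + \Phi)$. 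The scalar product then evaluates by inspection to
\[
-\mathcal{I}(p(t_0)) - \int_{\R^d} h(p(t_0,x))\, \nabla(g(p(t_0)) + \Phi) \cdot \nabla\beta(x)\, \rmd x,
\]
which matches $\dfrac{\rmd}{\rmd t}\bigr|_{t=t_0}\mathcal{F}(P^{\beta}(t))$ exactly via the perturbed energy-dissipation identity \eqref{edi2}.

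Uniqueness of the representer $\mathbb{G}_{\bar{\mu}}$ required by Definition \ref{defgf} follows by noting that, as $\beta$ ranges over $C_c^{\infty}(\R^d, \R^d)$, the resulting $\nabla\psi_2$ sweep out a dense subset of ${\rm Tan}_{\hat{h}(P(t_0))}\mathcal{P}_2(\R^d)$ by Lemma \ref{nabla} and Assumption \ref{tangent}; non-degeneracy of $\langle \cdot, \cdot \rangle_{\hat{h}(P(t_0))}$ on this tangent space forces any two candidate gradients to coincide. The main technical obstacle is precisely this tangent-space identification: one must ensure that every admissible direction is realized (or approximable) by some perturbation $\beta$, which is exactly the content of Assumption \ref{tangent} combined with Lemma \ref{nabla}. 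Granted that, the computations above, together with the integrability afforded by Proposition \ref{P2} and Lemma \ref{integrability} to justify the integrations by parts, complete the proof.
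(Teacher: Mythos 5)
Your proposal is correct, but it takes a genuinely different route from the paper. The paper proves \eqref{form2} variationally: it combines the perturbed dissipation identity \eqref{edi2} with the perturbed metric derivative \eqref{wh2} to compute the Wasserstein slope $\big|\nabla_{W_h}\mathcal{F}\big|(P^{\beta}(t_0)) = -\big\langle \mathbf{a}, (\mathbf{a}+\mathbf{b})/\|\mathbf{a}+\mathbf{b}\|_{L^2(\hat{h}(P(t_0)))}\big\rangle_{L^2(\hat{h}(P(t_0)))}$, then applies the Cauchy--Schwarz inequality to show the unperturbed curve ($\beta=0$) realizes the steepest slope, equality holding precisely when $\mathbf{a}+\mathbf{b}$ is a positive multiple of $\mathbf{a}$; the arbitrariness of $\beta$ then identifies the gradient. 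You instead bypass the slope comparison entirely: you plug the first $L^2$-variation $\tfrac{\delta\mathcal{F}}{\delta\mu}=\eta'(p)+\Phi=g(p)+\Phi$ into the representation formula \eqref{grad}, verify the defining relation \eqref{gradient} along the perturbed family via \eqref{edi2} (your sign bookkeeping with $\nabla\psi_1=\nabla(g(p)+\Phi)$ and $\nabla\psi_2=v^{\beta}$ in Definition \ref{scalar} is correct), and obtain uniqueness by density of the directions $\{v+\nabla\beta\}$ in ${\rm Tan}_{\hat{h}(P(t_0))}\mathcal{P}_2(\R^d)$. Your route is more direct, does not need Theorem \ref{derivative} (the metric derivative \eqref{wh2}) at all, and makes the uniqueness requirement of Definition \ref{defgf} explicit, which the paper leaves implicit; the paper's route buys the steepest-descent interpretation of the unperturbed dynamics and directly yields the slope identity $\big|\nabla_{W_h}\mathcal{F}\big|(P(t_0))=-\|\mathbf{a}\|_{L^2(\hat{h}(P(t_0)))}$ together with \eqref{form1}. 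One caveat: Definition \ref{defgf} demands \eqref{gradient} for all smooth curves, not only the perturbed family, so your argument ultimately leans on the formal chain-rule computation preceding the theorem (as does the paper's derivation of \eqref{grad}); the integrations by parts there should be justified by Proposition \ref{P2} and Lemma \ref{integrability}, exactly as you indicate.
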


\begin{proof}
Invoking \eqref{edi2} and \eqref{wh2}, we compute
the Wasserstein metric slope of the free energy functional $\mathcal{F}$ along the curve $t \mapsto P^{\beta}(t)$, which is equal to
\begin{equation}\label{main2}
\big|\nabla_{W_h} \mathcal{F} \big|(P^{\beta}(t_0)) = -\bigg \langle \mathbf{a},  \dfrac{\mathbf{a} +\mathbf{b} }{\big\| \mathbf{a}+\mathbf{b} \big\|_{L^2(\hat{h}(P(t_0)))}}
\bigg \rangle_{L^2(\hat{h}(P(t_0)))}.
\end{equation}
Let $\beta=0$ in \eqref{main2}, we obtain
the Wasserstein metric slope of the free energy functional $\mathcal{F}$ along the curve $t \mapsto P(t)$, which is given by
\begin{equation*}\label{main1}
\big|\nabla_{W_h} \mathcal{F} \big|(P(t_0)) = -\big\| \mathbf{a} \big\|_{L^2(\hat{h}(P(t_0)))}.
\end{equation*}
It follows then from the Cauchy-Schwarz inequality that
\begin{equation*}\label{main3}
\big|\nabla_{W_h} \mathcal{F} \big|(P^{\beta}(t_0)) -\big|\nabla_{W_h} \mathcal{F} \big|(P(t_0)) \geq 0
\end{equation*}
holds, and it equals $0$ if and only if $\mathbf{a}+\mathbf{b}$ is a positive multiple of $\mathbf{a}$.
The arbitrariness of $\beta$ indicates \eqref{form2} holds
and
\begin{equation}\label{form1}
\begin{aligned}
\big|{\rm grad}_{W_h} \mathcal{F}(P(t))\big|^2& = \big\langle {\rm grad}_{W_h} \mathcal{F}(P(t)), {\rm grad}_{W_h} \mathcal{F}(P(t)) \big\rangle_{\hat{h}(P(t))} \\
& =\int_{\R^d} | \nabla g(p(t,x))+\nabla \Phi(x) |^2 h(p(t,x)) \rmd x.
\end{aligned}
\end{equation}
\end{proof}

\section{An example}\label{example}

In this section, we apply our results to the Fermi-Dirac-Fokker-Planck equation.
Additionally, we perform numerical simulations to observe the energy dissipation rate.

If $b(p)=1-p$, $f(p)=p$ and $\Phi(x)=\frac{|x|^2}{2}$, the McKean-Vlasov SDE is given by
\begin{equation}\label{sde}
\begin{aligned}
\rmd X(t)&= -X(t) \big(1-\rho(X(t))\big) \rmd t + \sqrt{2} \rmd W(t),  t \in [0, T], \\
X(0)&=\xi_0
\end{aligned}
\end{equation}
and the corresponding nonlinear Fokker-Planck equation is given by
\begin{equation}\label{FD}
\begin{aligned}
 \partial_t p(t,x) &= {\rm div} (x (1-p(t,x)) p(t,x)) + \Delta p(t,x), (t,x)\in[0,T]\times \R^d, \\
 p(0, x)&=p_0(x), x\in \R^d.
\end{aligned}
\end{equation}

The existence and uniqueness of the strong solution to \eqref{sde} are guaranteed by Theorem 2.1 in \cite{W2}.
The existence and uniqueness of the smooth solution to \eqref{FD}, along with the properties outlined in Assumption \ref{con} \ref{smooth} can be found in \cite{CLR}.
The stationary solution of \eqref{FD} is given by the form
\begin{equation}\label{infty}
p_{c, \infty}=\dfrac{1}{1+c\rme^{\frac{|x|^2}{2}}}, c\geq 0.
\end{equation}
Since the mass is preserved, we can find a unique constant $c_0$ such that $\int_{\R^d} p_{c_0, \infty} \rmd x=\int_{\R^d} p_0 \rmd x =1$.
We choose an initial density function $p_0$ such that both Assumption \ref{con} \ref{initial} and $p_{c_2, \infty}\leq p_0\leq p_{c_1, \infty}$ hold, where $0<c_1 <c_2$.
Since the solution of \eqref{FD} satisfies the comparison principle, it follows that $p_{c_2, \infty} \leq p(t, \cdot)\leq p_{c_1, \infty}$ for all $t\in [0,T]$, and furthermore, $0 \leq p\leq \frac{1}{1+c_1}$.
Consequently Assumption \ref{con} \ref{b} is satisfied for $b_0:=\frac{c_1}{1+c_1}$ and $b_1:=1$.

Considering that $0$ and $1$ are singularity point of $\frac{1}{p(1-p)}$, we use the form $g(s)=\int_{b_1}^s \frac{1}{t(1-t)} \rmd t$. Thus
$$g(s)=\ln s-\ln (1-s) +\ln\dfrac{1-b_1}{b_1}, \eta(r)=r\ln r +(1-r)\ln (1-r)+\big(\ln \dfrac{1-b_1}{b_1}\big)r$$
and
$$\mathcal{F}(\rho)=\int_{\R^d}\Big(\rho\ln\rho +(1-\rho)\ln(1-\rho) +\big(\ln \dfrac{1-b_1}{b_1}\big)\rho +\dfrac{|x|^2}{2}\rho \Big)\rmd x.$$
Note that $g^{-1}(x)=\frac{1}{1+\frac{1-b_1}{b_1}\rme^{-x}}$, so we have $p_{\infty}:=g^{-1}(-\Phi+c)=\frac{1}{1+\frac{1-b_1}{b_1}\rme^{-c}\rme^{\frac{|x|^2}{2}}}$.
Obviously, the definition of $p_{\infty}$ is consistent with \eqref{infty} by taking $c_0=\frac{1-b_1}{b_1}e^{-c}$.

Using \eqref{560} and \eqref{edi20}, we compute the trajectorial rate of relative entropy dissipation
\begin{equation}\label{1p}
\begin{aligned}
& \lim_{t \downarrow t_0} \dfrac{\mathbb{E}_{\mathbb{P}}[\theta(t, X(t))| \mathcal{G}(t_0)] -\theta(t_0, X(t_0)) }{t-t_0}
= D(t_0, X(t_0)) \\
= &\ln\big(1-\rho(X(t_0))\big)\bigg( 1-\dfrac{1}{\rho(X(t_0))}+\dfrac{\langle X(t_0), \nabla \rho(X(t_0)) \rangle}{\rho(X(t_0))} -\dfrac{2\Delta \rho(X(t_0))}{\big(\rho(X(t_0))\big)^2} \\
&+\dfrac{2|\nabla \rho(X(t_0))|^2}{\big(\rho(X(t_0))\big)^3} \bigg) +\dfrac{|\nabla \rho(X(t_0))|^2}{\big(\rho(X(t_0))\big)^2\big(1-\rho(X(t_0))\big)} -|X(t_0)|^2\big(1-\rho(X(t_0))\big)+1
\end{aligned}
\end{equation}
in $L^1(\mathbb{P})$
and the energy dissipation
\begin{equation*}
\begin{aligned}
 \dfrac{\rmd }{\rmd t}\mathcal{F}(p(t,x))=-\mathcal{I}(p(t,x))
= - \int_{\R^d} \bigg| \dfrac{\nabla p(t,x)}{p(t,x)\big(1-p(t,x)\big)}+x \bigg|^2\big(1-p(t,x)\big)p(t,x) \rmd x,
\end{aligned}
\end{equation*}
where $\theta(t, x)=\ln p(t,x) +\frac{1-p(t,x)}{p(t,x)}\ln \big(1-p(t,x)\big) +\frac{|x|^2}{2}+\ln\frac{1-b_1}{b_1}$.

We now verify Assumption \ref{tangent}. Set $B_n$ to be the open ball centered at the origin with radius $n$ and define
\begin{equation*}\label{zeta}
\zeta_n(t, x):=
\begin{cases}
 g(p(t,x))+\Phi(x), \quad & |x|<n, \\
 \tilde{\zeta}(x), &  n \leq |x| \leq n+1, \\
 0, & |x|>n+1,
\end{cases}
\end{equation*}
where $\tilde{\zeta}(x) \in C^{\infty}(\overline{B_{n+1} \backslash B_n})$ and $| \nabla \tilde{\zeta}(x) |< 2\big|g(p(t,x))+\Phi(x)\big| $. Then
\begin{equation*}\label{L2}
\begin{aligned}
&\int_{\R^d} |\nabla \zeta_n-\nabla \big(g(p)+\Phi \big) |^2 p b(p) \rmd x \\
= & \int_{B_n \cup \overline{(B_{n+1} \backslash B_n)} \cup B_{n+1}^c} |\nabla \zeta_n-\nabla (g(p)+\Phi \big) |^2 p b(p) \rmd x \\
\leq & \int_{B_{n+1}^c}|\nabla \big(g(p)+\Phi \big) |^2 pb(p) \rmd x + \int_{\overline{(B_{n+1} \backslash B_n)}} 2|\nabla \zeta_n|^2 pb(p)\rmd x \\
& +\int_{\overline{(B_{n+1} \backslash B_n)}} 2|\nabla \big(g(p)+\Phi\big)|^2 pb(p)\rmd x \\
\leq & \int_{B_n^c}2|\nabla \big(g(p)+\Phi \big) |^2 pb(p) \rmd x + \int_{B_n^c} 8| g(p)+\Phi|^2 pb(p) \rmd x \\
\leq & \int_{B_n^c}2|\nabla \big(g(p)+\Phi \big) |^2 pb(p) \rmd x + \int_{B_n^c} 16| g(p)|^2 pb(p) \rmd x +\int_{B_n^c} 16| \Phi|^2 pb(p) \rmd x.
\end{aligned}
\end{equation*}
Recalling \eqref{int2}, we discover
\begin{equation}\label{e1}
\int_{\R^d} |\nabla \big(g(p)+\Phi \big) |^2 pb(p) \rmd x < +\infty.
\end{equation}
In view of $\Phi(x)=\frac{|x|^2}{2}$ and $p(t,x)\leq \frac{1}{1+c_1\rme^{\frac{|x|^2}{2}}}$, we obtain
\begin{equation}\label{e2}
\int_{\R^d} 16| \Phi|^2 pb(p) \rmd x \leq \int_{\R^d} |x|^4 \dfrac{ b_1}{1+c_1\rme^{\frac{|x|^2}{2}}} \rmd x <+\infty.
\end{equation}
According to the fact $\frac{1}{1+c_2\rme^{\frac{|x|^2}{2}}} \leq p(t,x)\leq \frac{1}{1+c_1\rme^{\frac{|x|^2}{2}}}$, we conclude that
\begin{equation}\label{e3}
\begin{aligned}
& \int_{\R^d} 16| g(p)|^2 pb(p) \rmd x \leq \int_{\R^d} 16 \dfrac{b_1 \gamma^2}{b_0^2}|\ln p|^2 p \rmd x \\
\leq & \int_{\{ p(t,x) \geq 1\}} 16 \dfrac{b_1 \gamma^2}{b_0^2} \big|\ln (1+c_1\rme^{\frac{|x|^2}{2}})\big|^2 \dfrac{1}{1+c_1\rme^{\frac{|x|^2}{2}}} \rmd x\\
&+ \int_{\{p(t,x)<1\} } 16 \dfrac{b_1 \gamma^2}{b_0^2} \big|\ln (1+c_2\rme^{\frac{|x|^2}{2}})\big|^2 \dfrac{1}{1+c_1\rme^{\frac{|x|^2}{2}}} \rmd x <+\infty.
\end{aligned}
\end{equation}
Employing \eqref{e1}, \eqref{e2} and \eqref{e3}, we deduce
\begin{equation*}
\int_{\R^d} |\nabla \zeta_n-\nabla \big(g(p)+\Phi \big) |^2 p b(p) \rmd x \rightarrow 0, \text{ as } n\rightarrow \infty,
\end{equation*}
which implies that
\begin{equation*}
-\nabla\big(g(\rho(X(t)))+\Phi(X(t))\big) \in {\rm Tan}_{\hat{h}(P(t))}\mathcal{P}_{2}(\R^d).
\end{equation*}

Since $p$ is uniformly bounded, $b$ is defined on the bounded domain $\big[\frac{c_1}{1+c_1}, 1\big]$. Therefore, $h'(r)\geq 0$ is no longer required for Theorem \ref{derivative0}.
Now we are in the position to apply Theorems \ref{derivative0} and \ref{main}.
Using \eqref{wh20}, \eqref{form2} and \eqref{form1}, we compute that
\begin{equation*}
\begin{aligned}
\lim_{t \downarrow t_0} \dfrac{W_h\big(P(t), P(t_0)\big)}{t-t_0}
&= \bigg\| \dfrac{\nabla \rho(X(t_0))}{\rho(X(t_0)) \big(1-\rho(X(t_0)) \big)}+ X(t_0) \bigg\|_{L^2(\hat{h}(P(t_0)))} \\
&= \big(\mathcal{I}(p(t_0))\big)^{\frac{1}{2}},
\end{aligned}
\end{equation*}
\begin{equation*}
{\rm grad}_{W_h} \mathcal{F}(P(t_0))=-{\rm div}\Big( \rho(X(t_0))\big(1-\rho(X(t_0))\big) \Big (\dfrac{\nabla \rho(X(t_0))}{\rho(X(t_0)) \big(1-\rho(X(t_0)) \big)}+ X(t_0) \Big) \Big)
\end{equation*}
and
\begin{equation*}
\begin{aligned}
\big|{\rm grad}_{W_h} \mathcal{F}(P(t_0))\big|
&= \bigg\| \dfrac{\nabla \rho(X(t_0))}{\rho(X(t_0)) \big(1-\rho(X(t_0)) \big)}+ X(t_0) \bigg\|_{L^2(\hat{h}(P(t_0)))} \\
&= -\Big(\int_{\R^d} \bigg| \dfrac{\nabla p(t_0,x)}{p(t_0,x)\big(1-p(t_0,x)\big)}+x \bigg|^2\big(1-p(t_0,x)\big)p(t_0,x) \rmd x \Big)^{\frac{1}{2}},
\end{aligned}
\end{equation*}
where $\rmd \hat{h}(P(t_0))=p(t_0,x)(1-p(t_0,x))\rmd x$.

Therefore, the solution of \eqref{FD} represents the gradient flow of the free energy functional $\mathcal{F}(\rho)$ with respect to the modified Wasserstein metric $W_h$.
Based on \eqref{1p}, we plot the energy dissipation $\mathcal{F}_i(p_t)-\mathcal{F}_i(p_0)$ over time for 500 trajectories, as well as the average energy dissipation $\overline{\mathcal{F}}(p_t)-\overline{\mathcal{F}}(p_0)$.
As shown in Fig.~\ref{figtraj1} and Fig.~\ref{figenergy1}, the energy of the system dissipates at an exponential rate, which coincides with the result in \cite{CLR}.

\begin{figure}[ht]
    \centering
    \begin{minipage}[b]{0.48\textwidth}
        \centering
        \includegraphics[width=\textwidth]{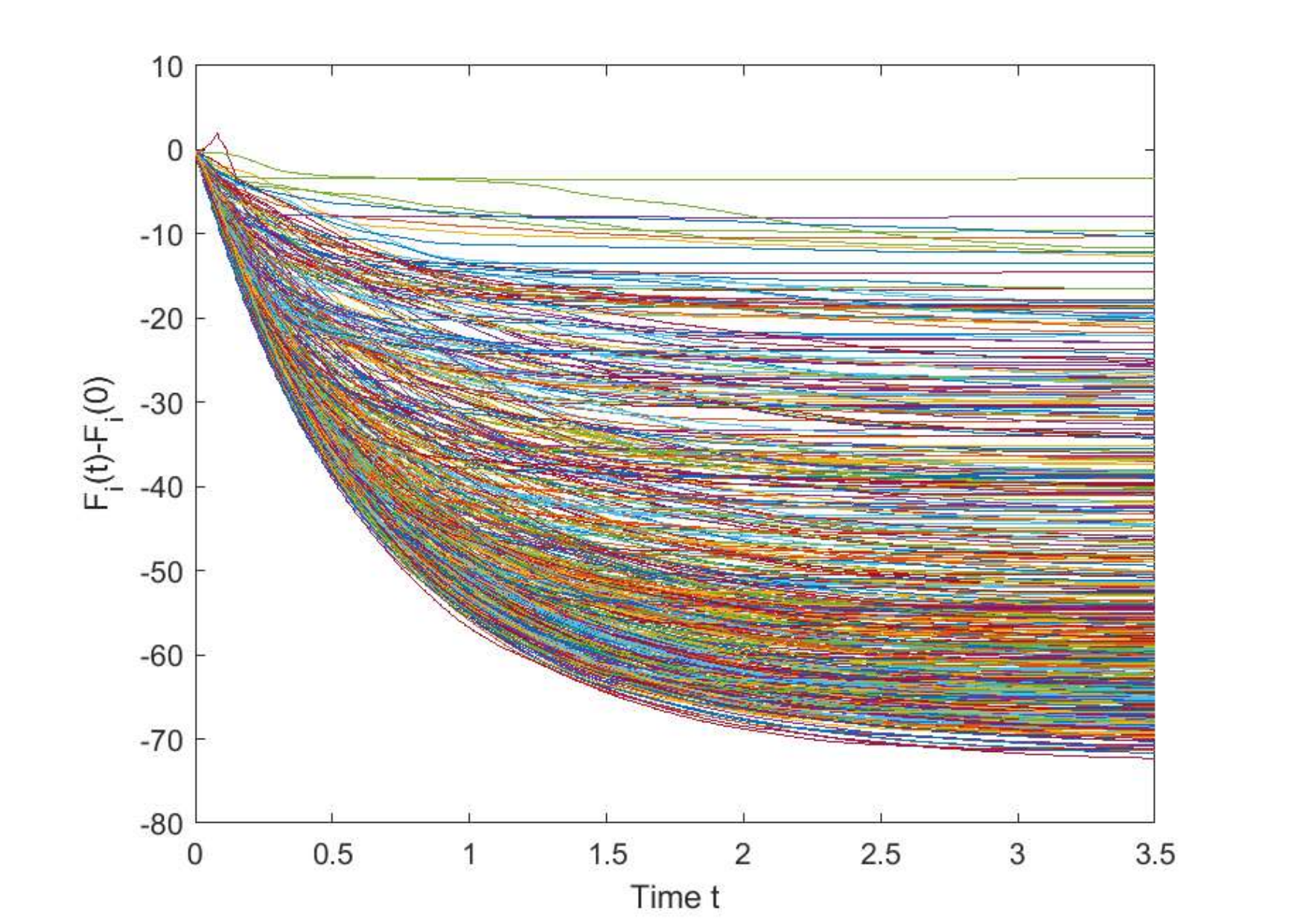}
        \captionsetup{font=small}
        \caption{The energy $\mathcal{F}_i(p_t)-\mathcal{F}_i(p_0)$ of 500 trajectories.}
        \label{figtraj1}
    \end{minipage}
    \hfill  
    \begin{minipage}[b]{0.48\textwidth}
        \centering
        \includegraphics[width=\textwidth]{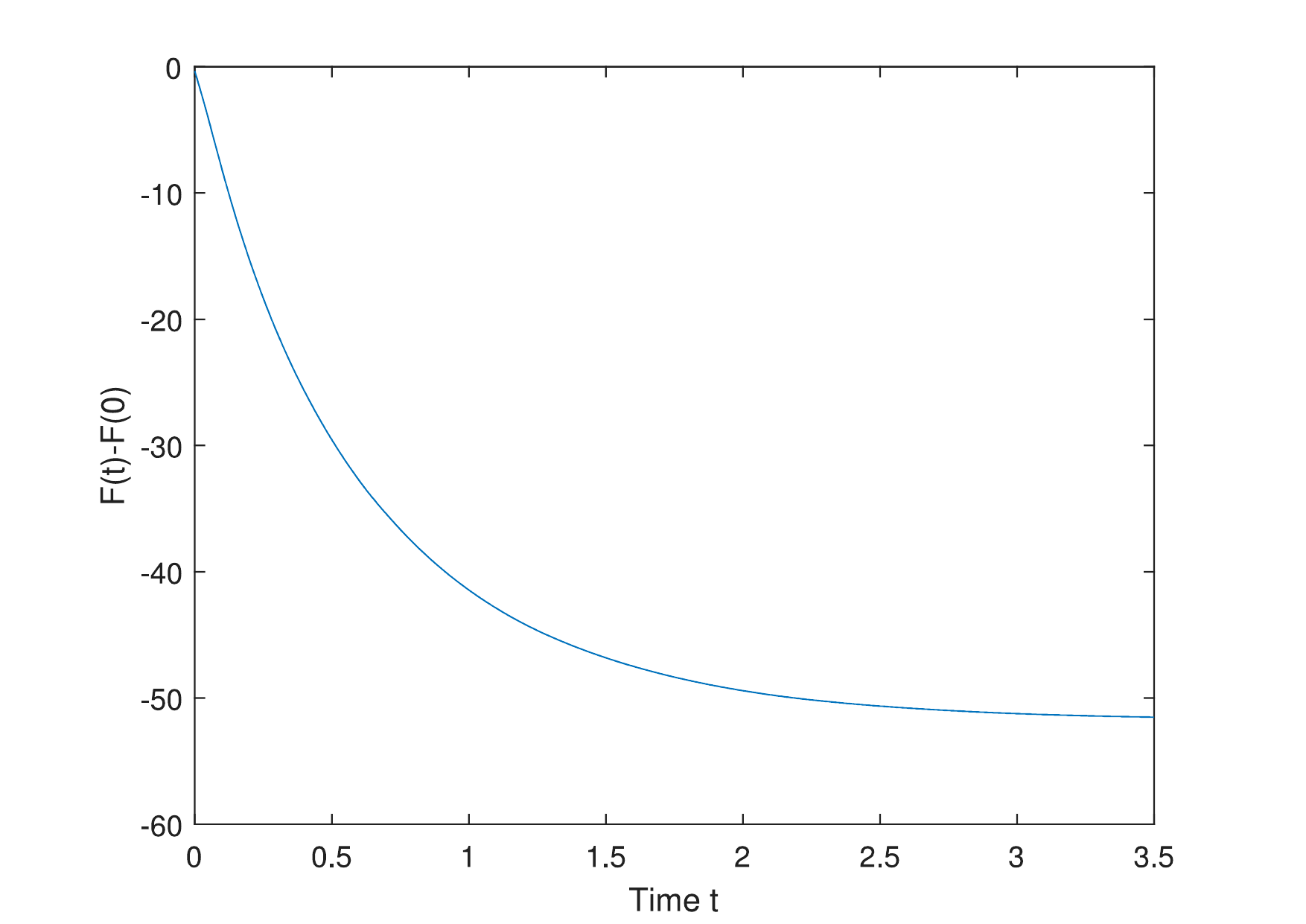}
        \captionsetup{font=small}
        \caption{The energy $\overline{\mathcal{F}}(p_t)-\overline{\mathcal{F}}(p_0)$ of \eqref{FD}.}
        \label{figenergy1}
    \end{minipage}
\end{figure}

\section{Further Discussions}\label{discussion}

In this section, we will make use of \eqref{560} for numerical simulations and observe the energy dissipation of the trajectories.
While the conditions for applying \eqref{560} are not strictly satisfied, these simulations may still provide valuable insights into the dynamical behavior of SDEs.
Based on these observations, we will pose two questions for further investigation.

\subsection{Condensation phenomena}
In this subsection we consider the case of $b(p)=1+p^\gamma$, where $\gamma \geq 1$. Let $\Phi(x)=\frac{|x|^2}{2}$,
we have the McKean-Vlasov SDE
\begin{equation}
\begin{aligned}
\rmd X(t)&= - X(t)\big(1+\rho(X(t))\big)^\gamma \rmd t + \sqrt{2} \rmd W(t), t\in[0,T], \\
X(0)&=\xi_0
\end{aligned}
\end{equation}
and the corresponding nonlinear Fokker-Planck equation
\begin{equation}\label{cp}
\begin{aligned}
\partial_t p(t,x) &= {\rm div} \big(x p(t,x)(1+ p(t,x)^\gamma) \big) + \Delta p(t,x), (t, x)\in [0,T] \times \R^d, \\
p(0,x)&=p_0(x), x\in \R^d.
\end{aligned}
\end{equation}
When $\gamma=1$, \eqref{cp} is also known as the Bose-Einstein-Fokker-Planck equation.

Since
$$g(s)=\frac{1}{\gamma}\big( \ln s^\gamma-\ln(1+s^\gamma) \big) +\frac{1}{\gamma} \ln 2$$
and
$$\eta(r) = \frac{1}{\gamma} \int_0^r \big( \ln s^\gamma-\ln(1+s^\gamma) \big) ds + \frac{r}{\gamma} \ln 2=:\frac{A(r)}{\gamma} + \frac{r}{\gamma} \ln 2, $$
we deduce the free energy functional
$$ \mathcal{F}(\rho)=\int_{\R^d} \Big( \frac{A(\rho)}{\gamma} + \frac{\rho}{\gamma} \ln 2 + \dfrac{|x|^2}{2}\rho \Big) dx$$
and the stationary solution is $p_{c, \infty}= \frac{1}{(c\rme^{\gamma \frac{|x|^2}{2}}-1)^{\frac{1}{\gamma}}}$.
Now choose $c_0$ such that $\int_{\R^d} p_{c_0, \infty} \rmd x =1$.
Similar to the Fermi-Dirac-Fokker-Planck equation, we select an initial density function $p_0$ such that both Assumption \ref{con} \ref{initial} and $p_{c_4, \infty}\leq p_0\leq p_{c_3, \infty}$ hold, where $0<c_3 <c_4$. The comparison principle implies $p_{c_4, \infty}\leq p(t,\cdot)\leq p_{c_3, \infty}$ for all $t \in [0,T]$ \cite{H}. It follows that Assumption \ref{con} \ref{b} is satisfied.
Applying \eqref{560} and \eqref{edi20} to \eqref{cp}, we compute the trajectorial rate of relative entropy dissipation
\begin{equation}\label{traj}
\begin{aligned}
& \lim_{t \downarrow t_0} \dfrac{\mathbb{E}_{\mathbb{P}}[\theta(t, X(t))| \mathcal{G}(t_0)] -\theta(t_0, X(t_0)) }{t-t_0}
= D(t_0, X(t_0)) \\
= & \Big( \ln \big(\rho(X(t_0)) \big)^\gamma -\ln\big(1+ \big(\rho(X(t_0))\big)^\gamma\big) \Big) \Big( \dfrac{2 \Delta \rho(X(t_0))}{\gamma \rho(X(t_0))} +\dfrac{1+\big( \rho(X(t_0)) \big)^\gamma}{\gamma} \\
& + \big \langle X(t_0), \nabla \rho(X(t_0)) \big \rangle \big(\rho(X(t_0)) \big)^{\gamma-1} -\dfrac{2\big(\nabla \rho(X(t_0))\big)^2}{\gamma \big(\rho(X(t_0)) \big)^2} \Big) \\
& + A\Big(\rho(X(t_0))\Big)\Big( -\dfrac{2\Delta \rho(X(t_0))}{\gamma \big(\rho(X(t_0)) \big)^2} -\dfrac{1+\big(\rho(X(t_0)) \big)^\gamma}{\gamma \rho(X(t_0))} \\
& -\big \langle X(t_0), \nabla \rho(X(t_0)) \big \rangle \big(\rho(X(t_0)) \big)^{\gamma-2}  +\dfrac{2\big (\nabla \rho(X(t_0))\big )^2}{\gamma \big( \rho(X(t_0)) \big)^3} \Big) \\
& +\dfrac{\big (\nabla \rho(X(t_0))\big)^2}{\big( \rho(X(t_0))\big) ^2} -\dfrac{\big( \rho(X(t_0))\big) ^{\gamma-2} \big(\nabla \rho(X(t_0))\big)^2}{1+\big(\rho(X(t_0)) \big)^\gamma}+1-X(t_0)^2\big(1+\big(\rho(X(t_0)) \big)^\gamma\big)
\end{aligned}
\end{equation}
in $L^1(\mathbb{P})$ and compute the energy dissipation
\begin{equation*}
\begin{aligned}
 \dfrac{\rmd }{\rmd t}\mathcal{F}(p(t,x))=- \mathcal{I}(p(t,x))
= -\int_{\R^d} \bigg| \dfrac{\nabla p(t,x)}{p(t,x)\big(1+p(t,x)^{\gamma}\big)}+x \bigg|^2\big(1+p(t,x)^{\gamma}\big)p(t,x) \rmd x,
\end{aligned}
\end{equation*}
where $\theta(t, x)= \frac{A(p(t,x))}{\gamma p(t,x)}+\frac{\ln 2}{\gamma}+\frac{|x|^2}{2}$.

According to \eqref{traj}, we perform numerical simulations for the trajectorial energy dissipation $\mathcal{F}_i(p_t)-\mathcal{F}_i(p_0)$ and its average $\overline{\mathcal{F}}(p_t)-\overline{\mathcal{F}}(p_0)$, where $i=1,...,500$.
In these simulations, the initial distribution is set as a Gaussian distribution with the mean of 20 and variance 1. Specifically, Fig.~\ref{figtraj2} and Fig.~\ref{figenergy2} illustrate the energy dissipation for $\gamma=1$, while Fig.~\ref{figtraj4} and Fig.~\ref{figenergy4} depict the energy dissipation for $\gamma=3$.

\begin{figure}[ht]
    \centering
    \begin{minipage}[b]{0.48\textwidth}
        \centering
        \includegraphics[width=\textwidth]{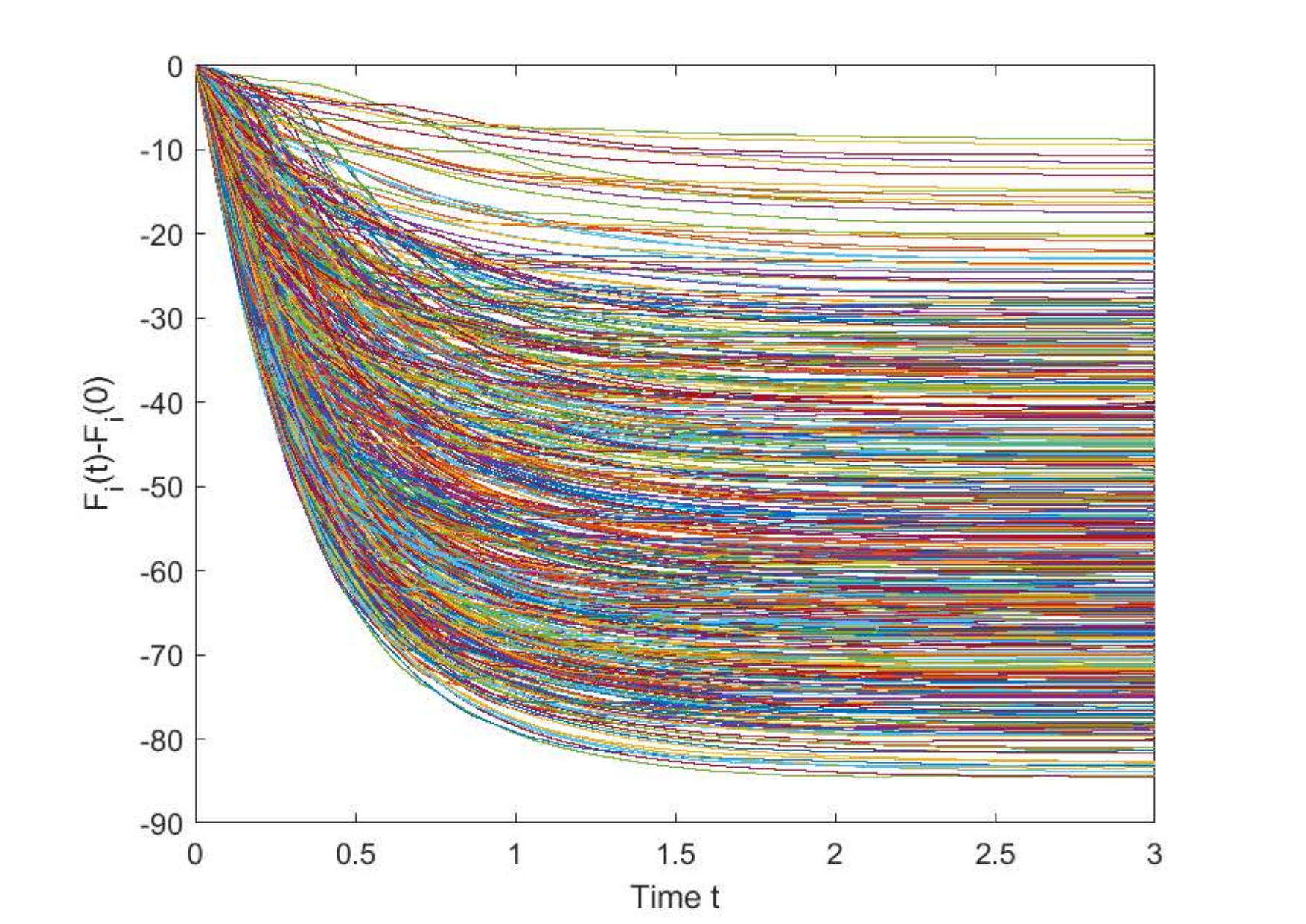}
        \captionsetup{font=small}
        \caption{For $\gamma=1$, the energy $\mathcal{F}_i(p_t)-\mathcal{F}_i(p_0)$ of 500 trajectories.}
        \label{figtraj2}
    \end{minipage}
    \hfill  
    \begin{minipage}[b]{0.48\textwidth}
        \centering
        \includegraphics[width=\textwidth]{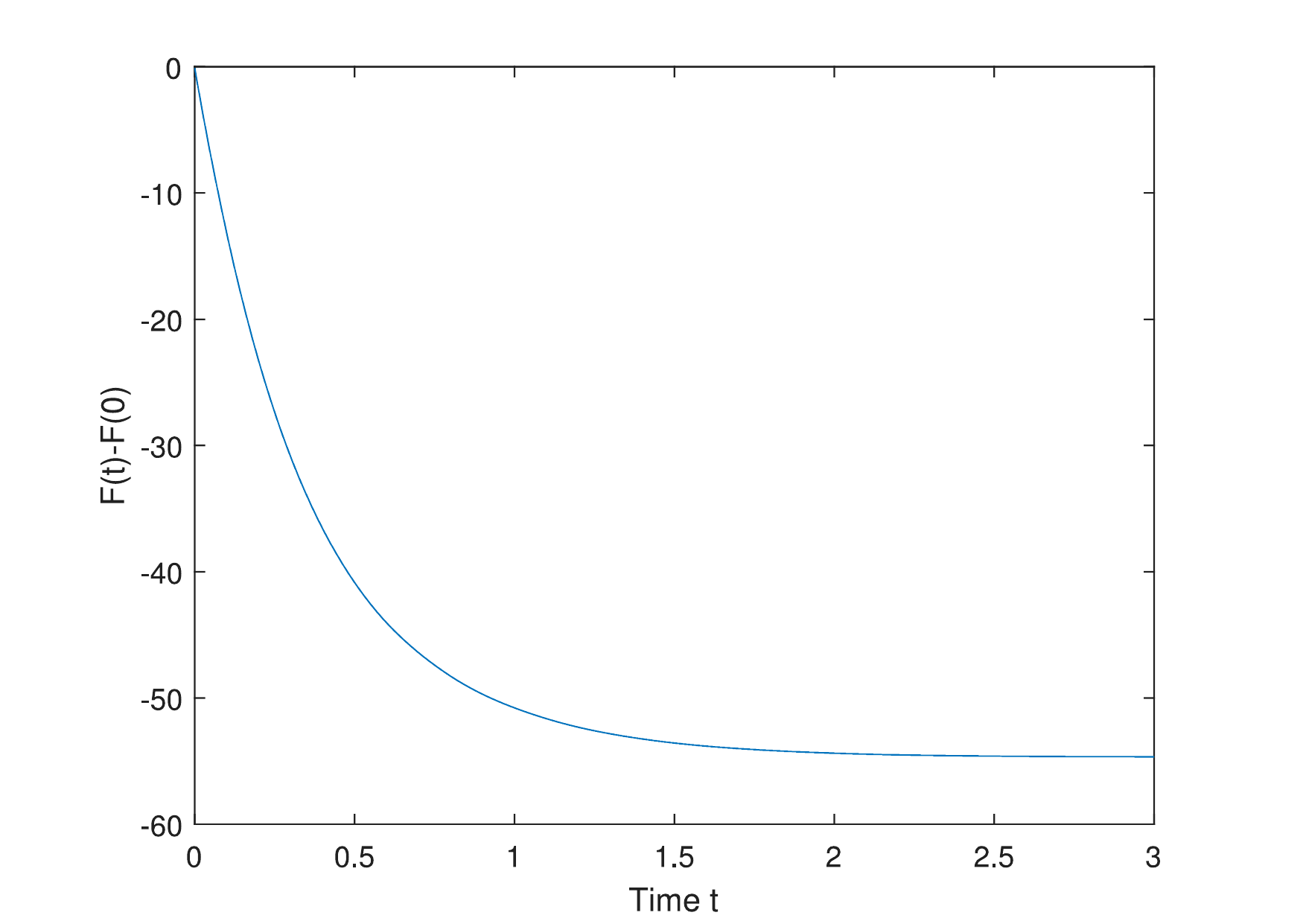}
        \captionsetup{font=small}
        \caption{For $\gamma=1$, the energy $\overline{\mathcal{F}}(p_t)-\overline{\mathcal{F}}(p_0)$ of \eqref{cp}.}
        \label{figenergy2}
    \end{minipage}
\end{figure}

\begin{figure}[ht]
    \centering
    \begin{minipage}[b]{0.48\textwidth}
        \centering
        \includegraphics[width=\textwidth]{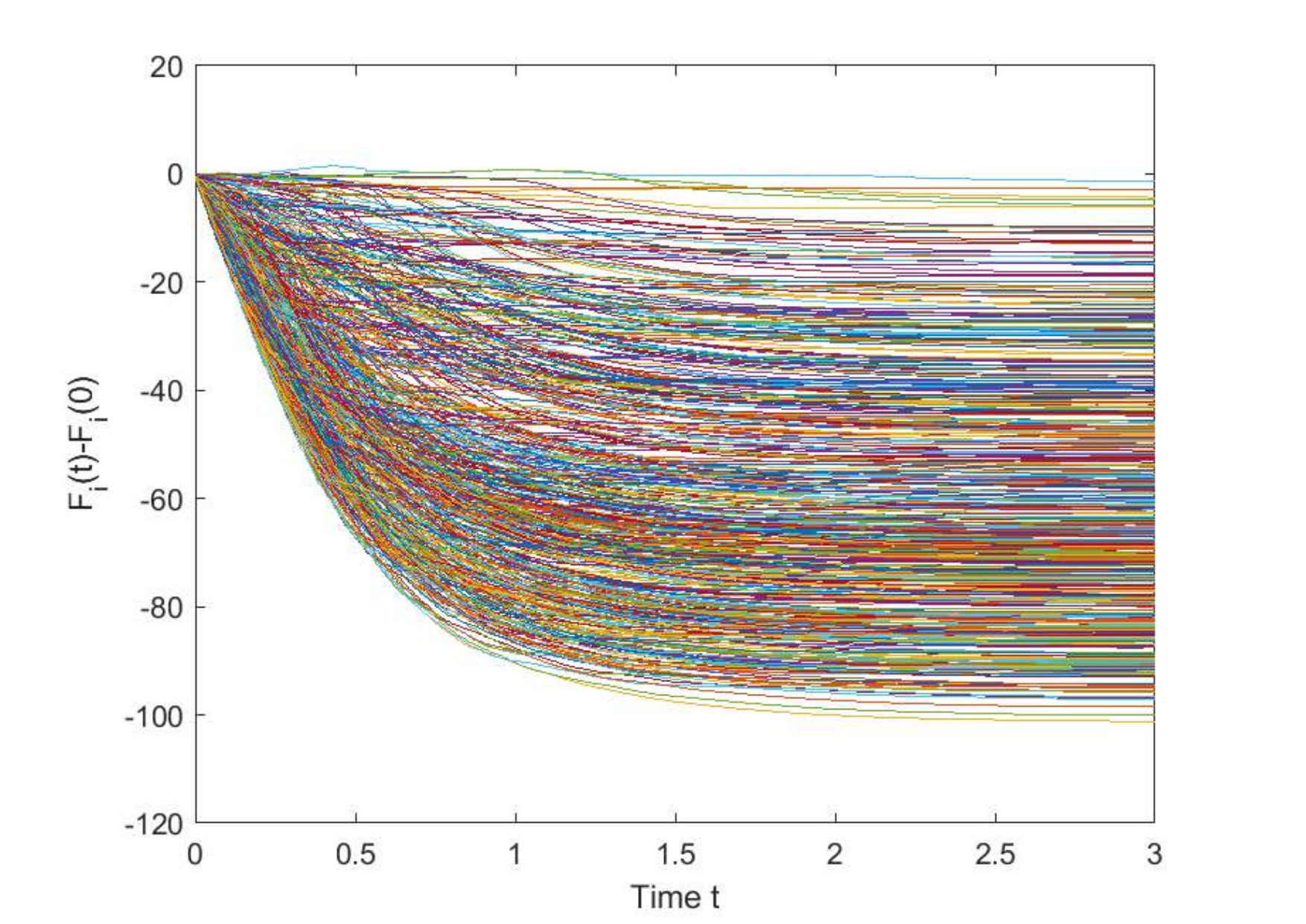}
        \captionsetup{font=small}
        \caption{For $\gamma=3$, the energy $\mathcal{F}_i(p_t)-\mathcal{F}_i(p_0)$ of 500 trajectories.}
        \label{figtraj4}
    \end{minipage}
    \hfill  
    \begin{minipage}[b]{0.48\textwidth}
        \centering
        \includegraphics[width=\textwidth]{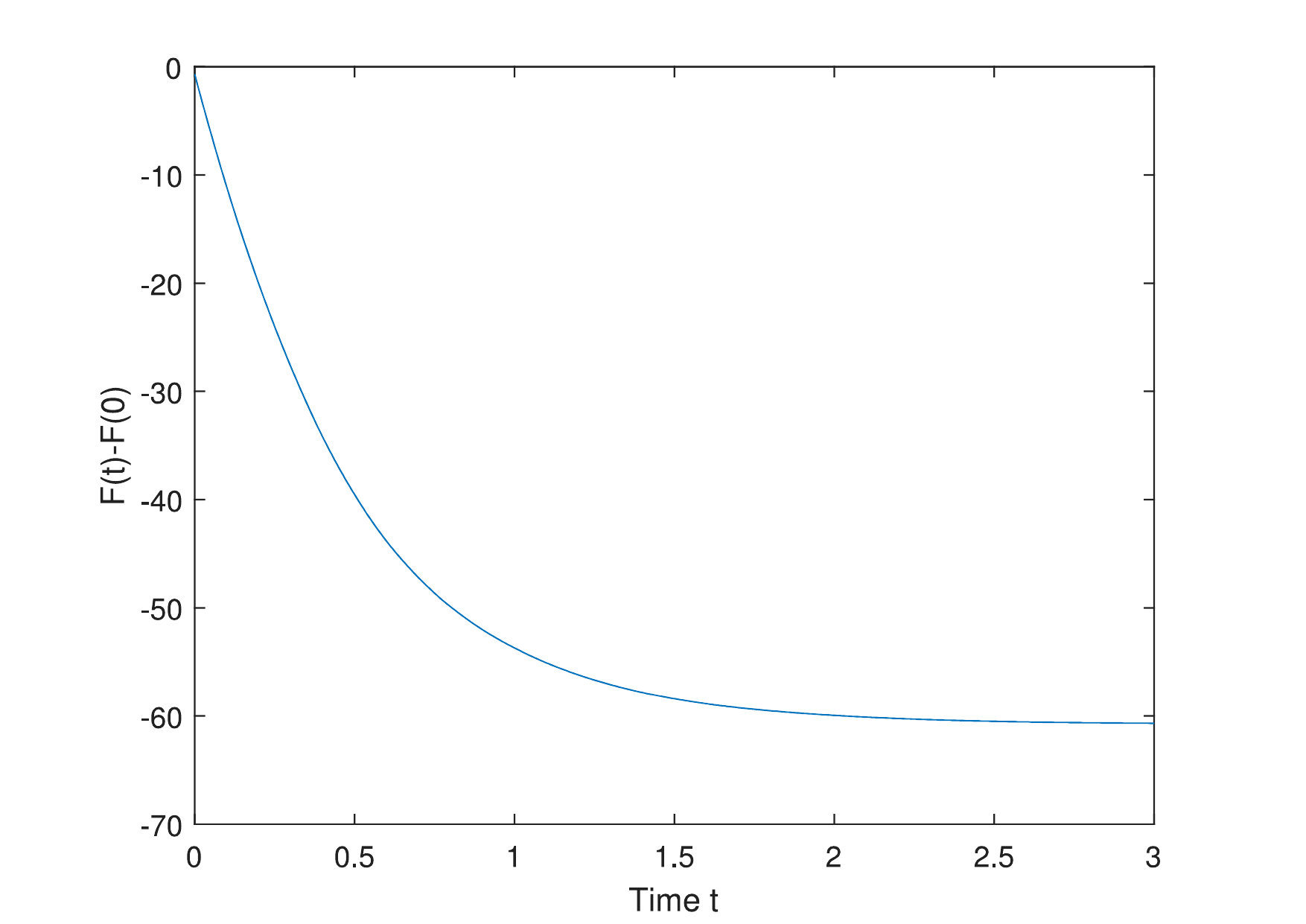}
        \captionsetup{font=small}
        \caption{For $\gamma=3$, the energy $\overline{\mathcal{F}}(p_t)-\overline{\mathcal{F}}(p_0)$ of \eqref{cp}.}
        \label{figenergy4}
    \end{minipage}
\end{figure}

It was pointed out in \cite{CHR, T} that the formation of condensates occurs when $d>2$ and the initial mass $m > m_c$,  where $m_c=\int_{\R^d} p_{1, \infty} \rmd x$ denotes the critical mass.
Moreover, $m_c>1$ when $\gamma>1$. The long-time behavior of the system when $m>m_c$ remains an open problem.
Generally, the distribution of solutions to SDEs is a probability measure, i.e. $m=1$, which presents challenges in directly describing condensation phenomena.
Nevertheless, we believe that SDEs could offer a promising framework for investigating this issue in future research. Based on this observation, we propose the following

\textbf{Question 1:} Under what conditions does the scenario $m_c<1$ occur, and what are the implications of the condensation phenomenon for the evolution of solutions to SDEs in this case?

\subsection{Non-exponential convergence}

We now focus on the case where $b(p)=p^\alpha$ and $\Phi(x)=\frac{|x|^2}{2}$, with $\alpha \geq 1$.
In this case, the McKean-Vlasov SDE is expressed as
\begin{equation}
\begin{aligned}
\rmd X(t)&= - X(t)\big(\rho(X(t))\big)^{\alpha -1} \rmd t + \sqrt{2} \rmd W(t), t\in[0, T],  \\
X(0)&=\xi_0
\end{aligned}
\end{equation}
and the corresponding nonlinear Fokker-Planck equation takes the form
\begin{equation}\label{pc}
\begin{aligned}
\partial_t p(t,x)& = {\rm div} \big(x p(t,x)^\alpha\big) + \Delta p(t,x), (t, x)\in [0, T]\times\R^d, \\
p(0,x)&=p_0(x), x\in \R^d.
\end{aligned}
\end{equation}

\textbf{Case 1: $\alpha=1$.} Similar to Remark \ref{prop.energy} (iii), we adjust the integration interval of $\eta(r)$ to avoid discontinuities, which does not impact the dissipation of energy.
We redefine $\eta(r):=\int_1^r g(s)\rmd s$ and $\mathcal{F}(\rho) :=\int_{\R^d} \big(\eta (\rho) +\Phi(x) \rho +1 \big) \rmd x$.
Consequently
$$g(s)=-\dfrac{1}{s}+1, \eta(r) = -\ln r+r-1$$
and
$$ \mathcal{F}(\rho)=\int_{\R^d} \Big( -\ln \rho +\rho + \dfrac{|x|^2}{2}\rho \Big) \rmd x.$$

According to \eqref{560} and \eqref{edi20}, we calculate the trajectorial rate of relative entropy dissipation
\begin{equation*}
\begin{aligned}
& \lim_{t \rightarrow t_0} \dfrac{\mathbb{E}_{\mathbb{P}}[\theta(t, X(t))| \mathcal{G}(t_0)] -\theta(t_0, X(t_0)) }{t-t_0}
= D(t_0, X(t_0)) \\
= & \ln \rho(X(t_0)) + 2\big\langle X(t_0), \nabla \rho(X(t_0)) \big\rangle \dfrac{\ln \rho(X(t_0))}{\rho(X(t_0))} + \Delta \rho(X(t_0)) \dfrac{\ln \rho(X(t_0))}{\big(\rho(X(t_0))\big) ^2}\\
& -\dfrac{\Delta \rho(X(t_0))}{\rho(X(t_0))} +\dfrac{\big(\nabla \rho(X(t_0))\big)^2}{\big( \rho(X(t_0)) \big)^2} +\Delta \rho(X(t_0)) +1 + \big\langle X(t_0), \nabla \rho(X(t_0)) \big\rangle \\
&-\big\langle X(t_0), \nabla \rho(X(t_0)) \big\rangle \rho(X(t_0)) -X(t_0)^2 \rho(X(t_0))
\end{aligned}
\end{equation*}
in $L^1(\mathbb{P})$ and calculate the energy dissipation
\begin{equation*}
\begin{aligned}
& \dfrac{\rmd }{\rmd t}\mathcal{F}(p(t,x))=-\mathcal{I}(p(t,x))
= -\int_{\R^d} \bigg| \dfrac{\nabla p(t,x)}{p(t,x)^2}+x \bigg|^2 p(t,x)^2 \rmd x,
\end{aligned}
\end{equation*}
where $\theta(t, x)= -\frac{\ln p(t,x)}{p(t,x)}+1-\frac{1}{p(t,x)} +\frac{|x|^2}{2} $.

\textbf{Case 2: $\alpha>1$.} For $\alpha>1$, we obtain
$$g(s)=-\dfrac{s^{-\alpha}-1}{\alpha}, \eta(r) = -\dfrac{r^{1-\alpha}}{\alpha(1-\alpha)}+\dfrac{r}{\alpha}$$
and
$$ \mathcal{F}(\rho)=\int_{\R^d} \Big( -\dfrac{\rho^{1-\alpha}}{\alpha(1-\alpha)}+\dfrac{\rho}{\alpha} + \dfrac{|x|^2}{2}\rho \Big) \rmd x.$$

In view of \eqref{560} and \eqref{edi20}, we have the trajectorial rate of relative entropy dissipation
\begin{equation*}
\begin{aligned}
& \lim_{t \rightarrow t_0} \dfrac{\mathbb{E}_{\mathbb{P}}[\theta(t, X(t))| \mathcal{G}(t_0)] -\theta(t_0, X(t_0)) }{t-t_0}
= D(t_0, X(t_0)) \\
= & \dfrac{1}{1-\alpha} +\dfrac{(\alpha+1)\big\langle X(t_0), \nabla \rho(X(t_0)) \big\rangle}{(1-\alpha )\rho(X(t_0))} +\dfrac{\Delta \rho(X(t_0))}{(1-\alpha) \big( \rho(X(t_0)) \big)^{1+\alpha}} +\dfrac{\big(\nabla \rho(X(t_0))\big)^2}{\big(\rho(X(t_0)) \big)^{1+\alpha}}  \\
&-\dfrac{\Delta \rho(X(t_0))}{\alpha \big(\rho(X(t_0)) \big)^{\alpha}} +\dfrac{\Delta \rho(X(t_0))}{\alpha} +1 +\dfrac{\big\langle X(t_0), \nabla \rho(X(t_0)) \big\rangle}{\alpha} -X(t_0)^2 \big( \rho(X(t_0)) \big)^{\alpha}\\
& -\dfrac{\big\langle X(t_0), \nabla \rho(X(t_0)) \big\rangle}{\alpha} \big(\rho(X(t_0)) \big)^{\alpha}
\end{aligned}
\end{equation*}
in $L^1(\mathbb{P})$ and the energy dissipation
\begin{equation*}
\begin{aligned}
 \dfrac{\rmd }{\rmd t}\mathcal{F}(p(t,x))=-\mathcal{I}(p(t,x))
= -\int_{\R^d} \bigg| \dfrac{\nabla p(t,x)}{p(t,x)^{\alpha+1}}+x \bigg|^2 p(t,x)^{\alpha+1} \rmd x,
\end{aligned}
\end{equation*}
where $\theta(t, x)= -\frac{( p(t,x) )^{1-\alpha}}{\alpha(1-\alpha)} +\frac{p(t,x)}{\alpha} +\frac{|x|^2}{2} $.

Now, let the initial density function be $p_0(x)=\frac{1}{\sqrt{2\pi}}\rme^{-\frac{(x-20)^2}{2}}$, and consider 500 trajectories.
Then, Fig.~\ref{figtraj5}-Fig.~\ref{figenergy6} illustrate the energy dissipation for $\alpha=1$ and $\alpha=2$.
Numerical results show that for $\alpha=1$, the energy dissipation converges at an exponential rate.
However, for $\alpha=2$, the convergence rate is less evident, suggesting the possibility of non-exponential convergence, such as polynomial convergence.
We hypothesize that this change arises because the condition $b\geq b_0$ for some constant $b_0>0$ is not satisfied.
This significant shift introduces a new question, which is worthy of further investigation in our eyes.

\textbf{Question 2:} If $b$ is not bounded from below, is it possible to identify an appropriate free energy functional and metric such that \eqref{pc} admits a gradient flow representation?
Furthermore, how can the rate of energy dissipation be accurately characterized in this case?

\begin{figure}[ht]
    \centering
    \begin{minipage}[b]{0.48\textwidth}
        \centering
        \includegraphics[width=\textwidth]{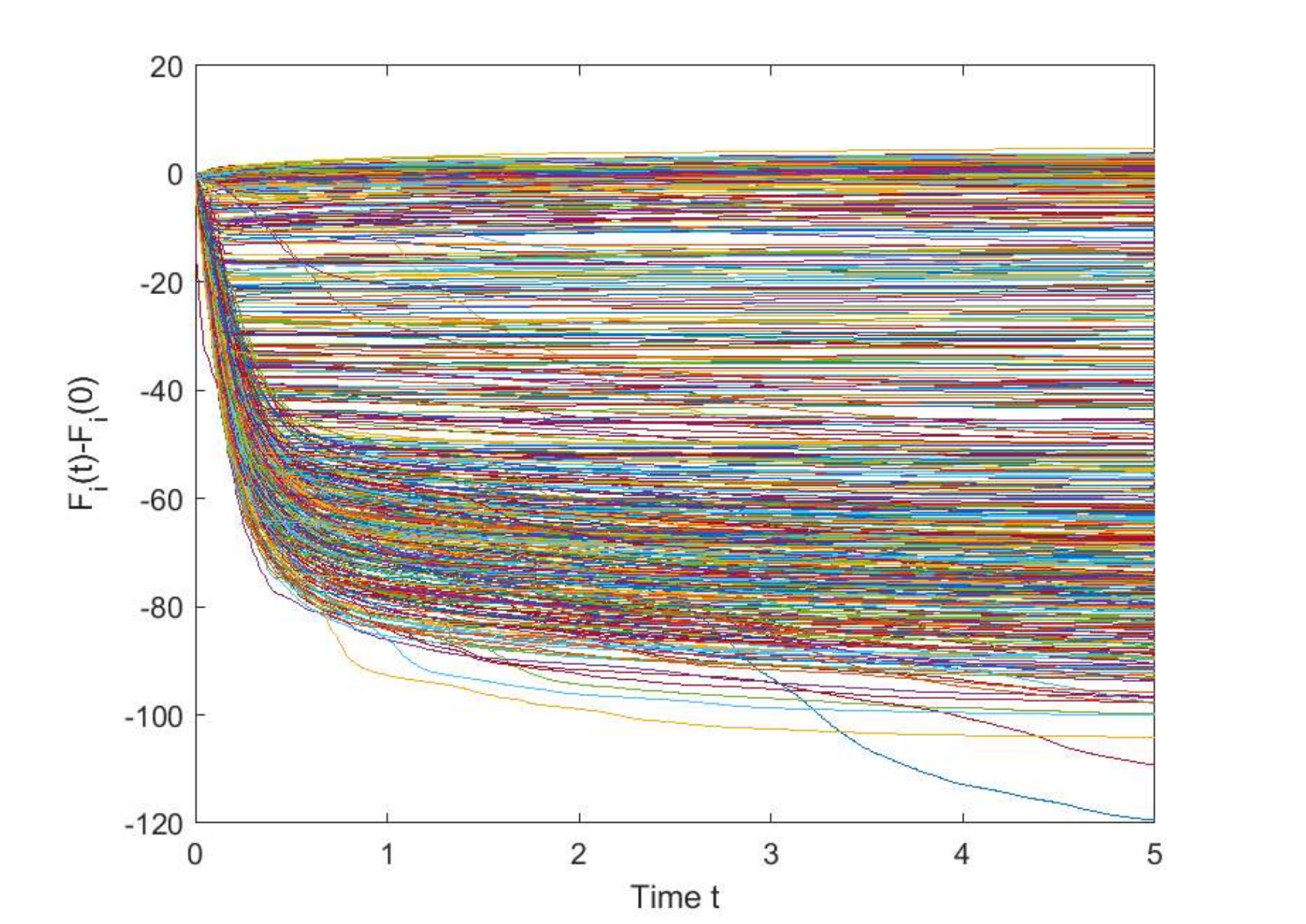}
        \captionsetup{font=small}
        \caption{For $\alpha=1$, the energy $\mathcal{F}_i(p_t)-\mathcal{F}_i(p_0)$ of 500 trajectories.}
        \label{figtraj5}
    \end{minipage}
    \hfill  
    \begin{minipage}[b]{0.48\textwidth}
        \centering
        \includegraphics[width=\textwidth]{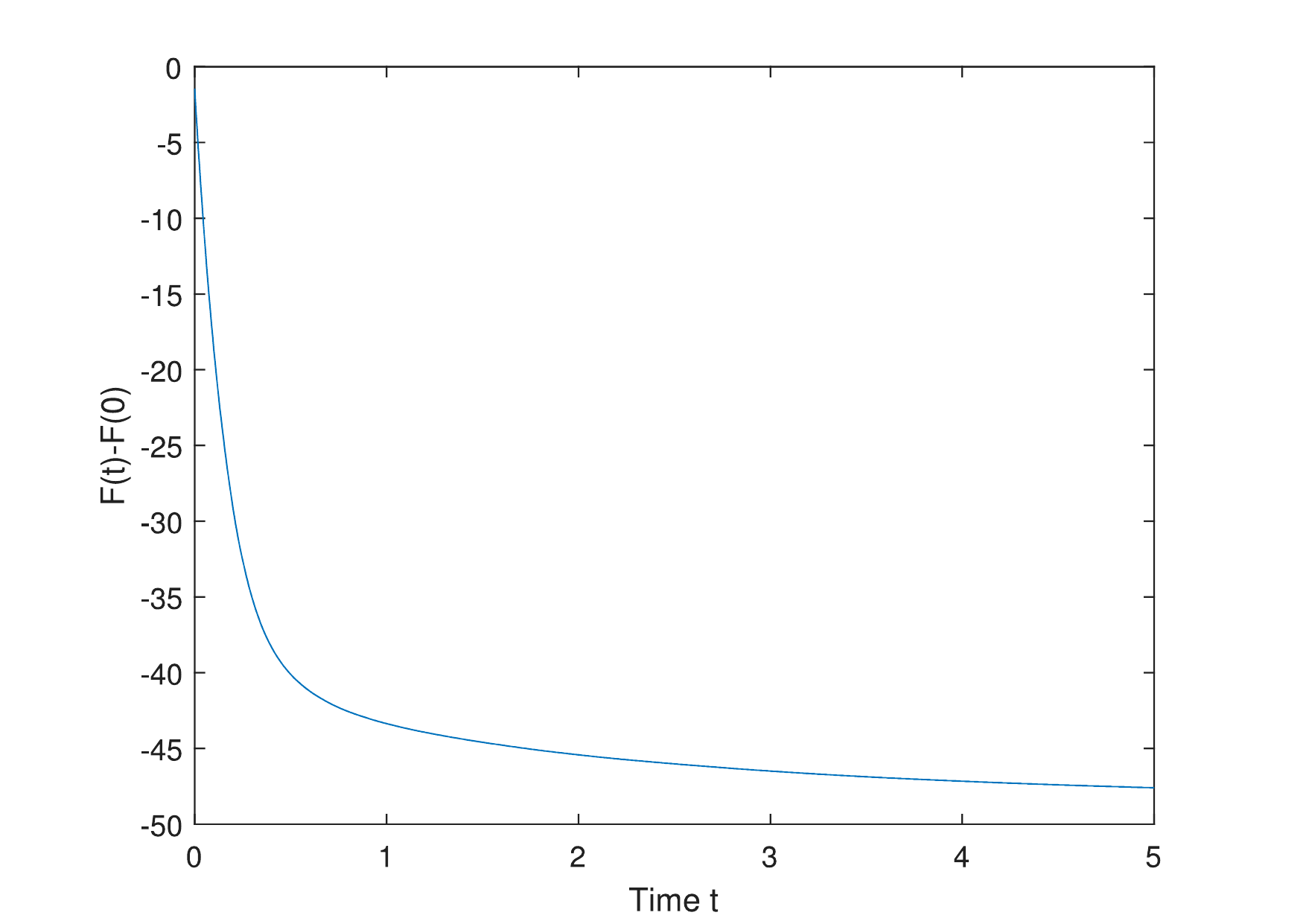}
        \captionsetup{font=small}
        \caption{For $\alpha=1$, the energy $\overline{\mathcal{F}}(p_t)-\overline{\mathcal{F}}(p_0)$ of \eqref{pc}.}
        \label{figenergy5}
    \end{minipage}
\end{figure}

\begin{figure}[ht]
    \centering
    \begin{minipage}[b]{0.48\textwidth}
        \centering
        \includegraphics[width=\textwidth]{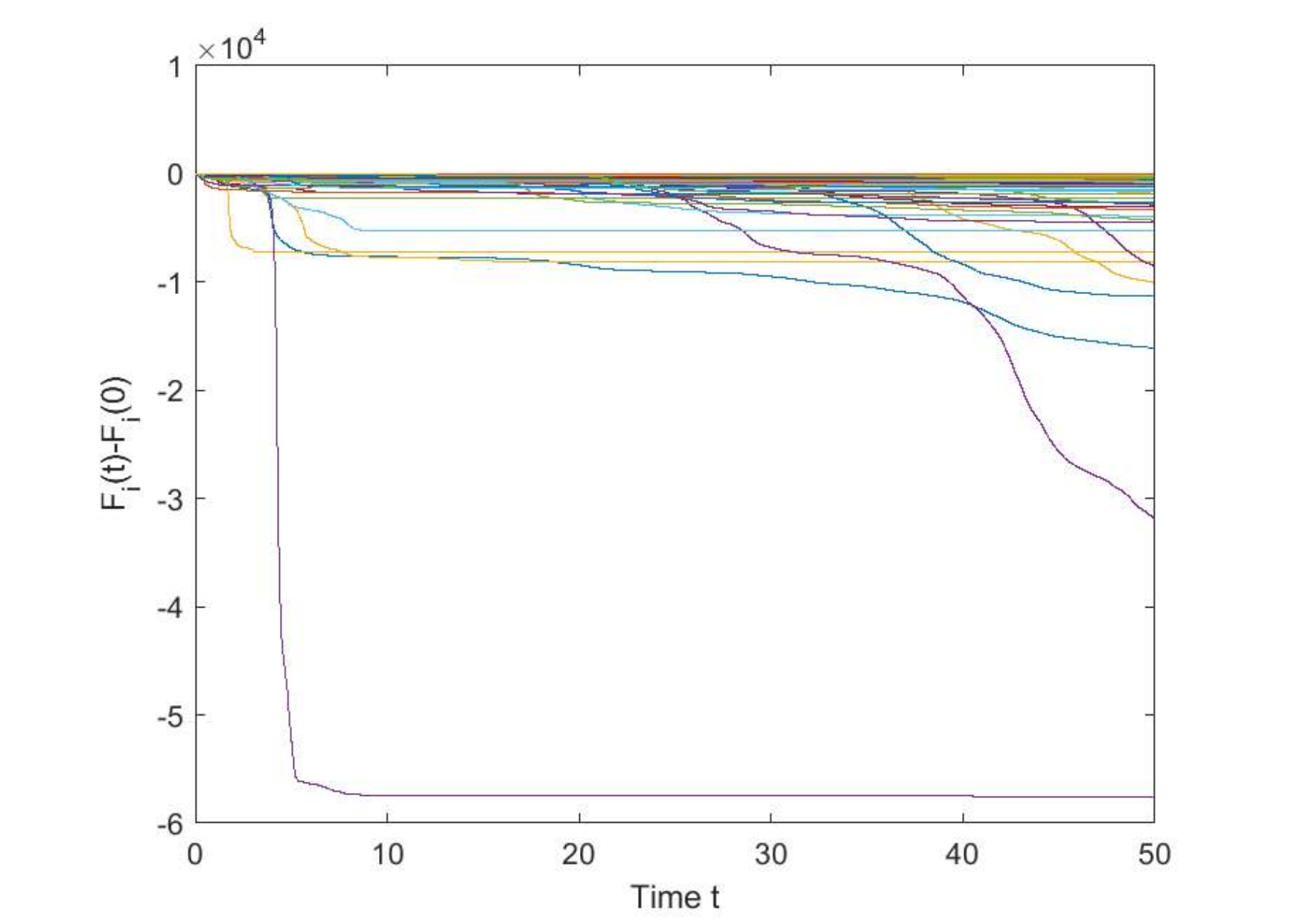}
        \captionsetup{font=small}
        \caption{For $\alpha=2$, the energy $\mathcal{F}_i(p_t)-\mathcal{F}_i(p_0)$ of 500 trajectories.}
        \label{figtraj6}
    \end{minipage}
    \hfill  
    \begin{minipage}[b]{0.48\textwidth}
        \centering
        \includegraphics[width=\textwidth]{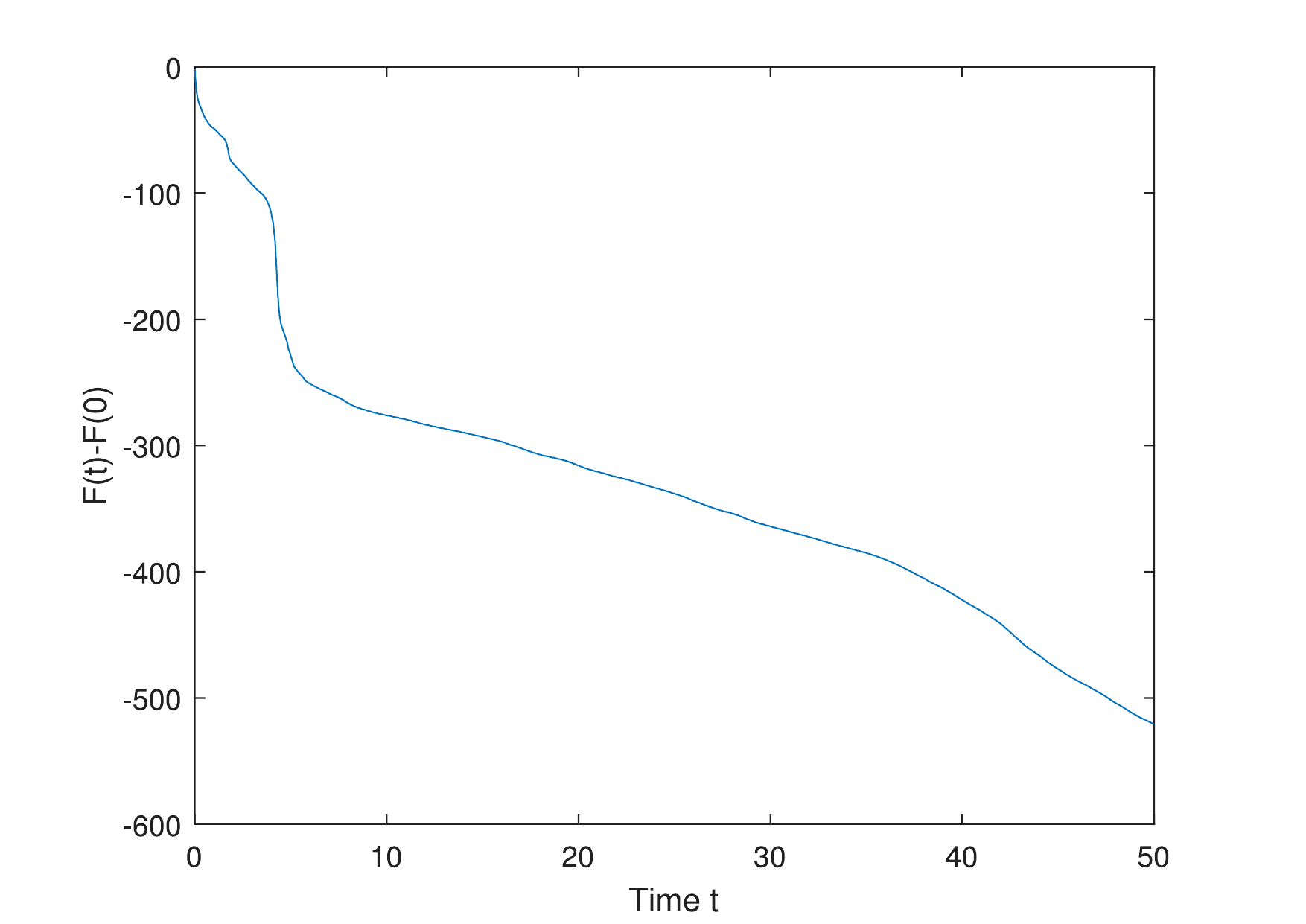}
        \captionsetup{font=small}
        \caption{For $\alpha=2$, the energy $\overline{\mathcal{F}}(p_t)-\overline{\mathcal{F}}(p_0)$ of \eqref{pc}.}
        \label{figenergy6}
    \end{minipage}
\end{figure}

\appendix
\section{Proof of Lemma \ref{integrability}}\label{appendix}

Before formally beginning the proof, we mention some conventions.
We denote convolution by
\begin{equation}\label{conv1}
p \ast \omega (t,x):=\int_{\R^d} \omega(x-y) p(t, y) \rmd y
\end{equation}
for $\omega(x) \in C_0^{\infty}(\R^d)$.
The convolution employed in the remainder of the proof are all with respect to the variable $x \in \R^d$.
Let $\sigma(x)=\frac{1}{\sqrt{2\pi}}e^{-\frac{x^2}{2}}$ represent the standard Gaussian density and $\omega_{\varepsilon}(x)= \varepsilon^{-d} \sigma(\frac{x}{\varepsilon})$ for $\varepsilon \in (0, 1)$. For convenience, we will abbreviate $\partial_{x_i x_j} f(x)$ as $\partial_{ij}f(x)$ and adopt the Einstein summation convention moving forward.

Following a similar approach to the proof in \cite[Theorem 2.1]{BRS}, we will proceed based on the following result \cite[Lemma 2.1]{BKR}.
Given two nonnegative functions $f_1, f_2 \in L^1(\R^d)$, for any measurable function $\psi$ such that $|\psi|^2 f_1 \in L^1(\R^d)$,
\begin{equation}\label{inequality}
\int_{\R^d} \dfrac{|(\psi f_1)\ast f_2|^2}{f_1 \ast f_2} \rmd x \leq \int_{\R^d} |\psi|^2 f_1 \rmd x \int_{\R^d} f_2 \rmd x.
\end{equation}
Here we follow the convention that $\frac{|(\psi f_1)\ast f_2(x)|^2}{f_1 \ast f_2(x)}:=0$ if $f_1 \ast f_2(x)=0$.

According to Assumption \ref{con} and \eqref{NFPE}, we obtain
\begin{equation}\label{convolution}
\begin{aligned}
\partial_t(p \ast \omega) & =\int_{\R^d} [\omega(x-y)\partial_{ii}f(p(t, y))+\omega(x-y) \partial_i(\nabla\Phi^i(y)b(p(t,y)) p(t,y))]\rmd y \\
& = \int_{\R^d} [\partial_{y_i y_i}\omega(x-y) f(p(t, y)) -\partial_{y_i} \omega(x-y)\nabla\Phi^i(y) b(p(t,y)) p(t,y)] \rmd y \\
& = \int_{\R^d} [\partial_{ii} \omega(x-y) f(p(t,y)) +\partial_i \omega(x-y)\nabla\Phi^i(y) b(p(t,y))p(t,y)] \rmd y\\
& = f(p) \ast \partial_{ii}\omega (t, x) +[\nabla \Phi^i b(p) p] \ast \partial_i \omega (t, x).
\end{aligned}
\end{equation}
By integrating \eqref{convolution} with respect to time from 0 to $t \in [0, T]$, we obtain
\begin{equation}\label{conv2}
\begin{aligned}
p\ast \omega(t,x) & =p\ast \omega(0,x) +\int_0^t \upsilon(s, p(s, x), x) \rmd s \\
& =p\ast \omega(0,x) +\int_0^t [f(p) \ast \partial_{ii}\omega(t, x) +[\nabla \Phi^i b(p) p] \ast \partial_i \omega(t, x)] \rmd s.
\end{aligned}
\end{equation}

To make the proof rigorous, we will show that \eqref{conv2} is consistent with the initial definition \eqref{conv1} for a.e $t\in [0, T]$ and $x\in \R^d$.
Since Assumptions \ref{con} and $\omega(x) \in C_0^{\infty}(\R^d)$ hold, it follows readily that $\upsilon(s, p(s, x), x) \in L^1([0,T]\times \R^d)$ and $p \ast \omega(0,x)$ is bounded.
Furthermore, function $p\ast \omega(t,x)$ is absolutely continuous with respect to $t\in [0, T]$ and smooth with respect to $x\in \R^d$. Combined with $\|p\ast \omega(t,x)\|_{\infty} \leq \|\omega(x)\|_{\infty}$ for all $t \in [0, T]$, it follows that $p\ast \omega(t,x)\in C_b^{\infty}([0,T] \times \R^d)$.

The above analysis are also applicable for $\omega(x)=\omega_{\varepsilon}(x)$.
We define $p_{\varepsilon}=p\ast \omega_{\varepsilon}$, $\Lambda(x)=\ln \max (|x|, 1)$
and $h_{\varepsilon}(t, x)=p_{\varepsilon}(t, x)+\varepsilon \max (|x|, 1)^{-d-1}$.
Assumption \ref{con} guarantees that $\Lambda \in L^1(p(t, \cdot))$ for $t\in [0, T]$. Taking $t_0\in (0,T]$, it is not hard to get $\int_{\R^d} p(t_0, x)\Lambda(x) \rmd x < +\infty$.
We choose $\varepsilon=\frac{1}{n}$, where $n$ is any positive integer.
According to $\Lambda(x+y) \leq \Lambda(x)+|y|$, which can be verified using a classification-based approach, it follows that
\begin{equation}\label{11}
\begin{aligned}
\int_{\R^d} h_{\varepsilon}(t_0, x) \Lambda(x)\rmd x
& \leq \int_{\R^d} p_{\varepsilon}(t_0, x) \Lambda (x)\rmd x + \int_{\R^d} \varepsilon \max (|x|, 1)^{-d-1} \rmd x \\
& \leq \int_{\R^d} p(t_0, x)\Lambda (x)\rmd x +\int_{\R^d} |y|\omega_{\varepsilon}(y)\rmd y +\varepsilon\int_{\R^d}\max (|x|, 1)^{-d-1}\Lambda(x) \rmd x \\
& \leq \int_{\R^d} p(t_0, x)\Lambda (x)\rmd x + \varepsilon M_1,
\end{aligned}
\end{equation}
where $M_1$ is independent of the value of $\varepsilon$.

Next we want to prove that
\begin{equation}\label{12}
\begin{aligned}
& \int_0^{t_0} \!\! \int_{\R^d} \partial_t(p\ast \omega_{\varepsilon})(t,x) \ln h_{\varepsilon}(t,x) \rmd x \rmd t \\
=& \int_0^{t_0} \!\! \int_{\R^d} [f(p) \ast \partial_{ii}\omega_{\varepsilon} (t, x) +[\nabla \Phi^i b(p) p] \ast \partial_i \omega_{\varepsilon} (t, x)] \ln h_{\varepsilon}(t,x) \rmd x \rmd t.
\end{aligned}
\end{equation}
This follows from \eqref{convolution} as well as the proof of integrability at time $t\in [0, t_0]$.
For $|\ln h_{\varepsilon}|$, we can obtain
\begin{equation*}
|\ln h_{\varepsilon}| =|\ln(p_{\varepsilon}+\varepsilon\max(|x|, 1)^{-d-1})|
\leq C_1+C_2\Lambda
\end{equation*}
for some constants $C_1$ and $C_2$.
The inequality \eqref{inequality} and Assumption \ref{con} can guarantee that
\begin{equation}\label{13}
\begin{aligned}
\|([\nabla \Phi^i b(p)p]\ast \partial_i \omega_{\varepsilon})(p\ast |\partial_i \omega_{\varepsilon}|)^{-\frac{1}{2}}\|_{L^2}^2
\leq \int_{\R^d} |\nabla \Phi^i b(p) p|^2 p\rmd x \cdot \int_{\R^d}\partial_i \omega_{\varepsilon} \rmd x < +\infty.
\end{aligned}
\end{equation}
It can be easily concluded through a classification-based argument that
\begin{equation*}
\Lambda(x+y)^2 \leq 4 +2\Lambda(x)^2 +2\Lambda(y)^2.
\end{equation*}
Then $\Lambda(x)^2 \leq 4 +2\Lambda(x-y)^2 +2\Lambda(y)^2$ and $\Lambda(x) \in L^2(p(t, \cdot))$ together guarantee that
\begin{equation}\label{14}
\begin{aligned}
&\|(p\ast |\partial_i\omega_{\varepsilon}|)^{\frac{1}{2}} \Lambda\|_{L^2}^2 \\
\leq & \int_{\R^d} 4 p\ast |\partial_i \omega_{\varepsilon}|\rmd x +2\int_{\R^d} |\partial_i \omega_{\varepsilon}(y)|\Big(\int_{\R^d}p(t, x-y)\Lambda(x-y)^2 \rmd x\Big) \rmd y \\
& + 2\int_{\R^d}\Big(\int_{\R^d}|\partial_i\omega_{\varepsilon}(x-y)|\rmd x\Big)p(t, y)\Lambda(y)^2 \rmd y
< +\infty.
\end{aligned}
\end{equation}
Combining \eqref{13}, \eqref{14} and H\"older's inequality, we obtain
\begin{equation*}\label{15}
\begin{aligned}
& \|([\nabla \Phi^i b(p) p] \ast \partial_i \omega_{\varepsilon})\Lambda\|_{L^1} \\
= &\|([\nabla \Phi^i b(p)p]\ast \partial_i \omega_{\varepsilon})(p\ast |\partial_i \omega_{\varepsilon}|)^{-\frac{1}{2}} \cdot (p\ast |\partial_i\omega_{\varepsilon}|)^{\frac{1}{2}} \Lambda \|_{L^1} \\
\leq & \|([\nabla \Phi^i b(p)p]\ast \partial_i \omega_{\varepsilon})(p\ast |\partial_i \omega_{\varepsilon}|)^{-\frac{1}{2}}\|_{L^2}^2 \cdot \|(p\ast |\partial_i\omega_{\varepsilon}|)^{\frac{1}{2}} \Lambda\|_{L^2}^2
 < +\infty,
\end{aligned}
\end{equation*}
which means $([\nabla \Phi^i b(p) p] \ast \partial_i \omega_{\varepsilon} )\Lambda$ is integrable.
The inequality \eqref{inequality} and Assumption \ref{con} can guarantee that
\begin{equation}\label{16}
\begin{aligned}
\|(f(p) \ast \partial_{ii}\omega_{\varepsilon}) (p\ast |\partial_{ii} \omega_{\varepsilon}|)^{-\frac{1}{2}}\|_{L^2}^2
\leq \int_{\R^d}\Big|\dfrac{f(p)}{p}\Big|^2 p \rmd x \cdot\int_{\R^d} \partial_{ii} \omega_{\varepsilon} \rmd x
< +\infty.
\end{aligned}
\end{equation}
Similar to \eqref{14}, we obtain
\begin{equation}\label{17}
\|(p\ast |\partial_{ii}\omega_{\varepsilon}|)^{\frac{1}{2}} \Lambda\|_{L^2}^2 <+\infty.
\end{equation}
Combining \eqref{16}, \eqref{17} and H\"older's inequality, we obtain
\begin{equation*}\label{18}
\begin{aligned}
& \|(f(p) \ast \partial_{ii}\omega_{\varepsilon})\Lambda\|_{L^1}
 \leq \|(f(p) \ast \partial_{ii}\omega_{\varepsilon}) (p\ast |\partial_{ii} \omega_{\varepsilon}|)^{-\frac{1}{2}}\|_{L^2}^2 \cdot
\|(p\ast |\partial_{ii}\omega_{\varepsilon}|)^{\frac{1}{2}} \Lambda\|_{L^2}^2
 <+\infty.
\end{aligned}
\end{equation*}
Therefore, we have
\begin{equation}\label{19}
\begin{aligned}
& \int_0^{t_0} \!\! \int_{\R^d} \partial_t p_{\varepsilon} \ln h_{\varepsilon}(t,x) \rmd x \rmd t \\
= & \int_0^{t_0} \!\! \int_{\R^d} \partial_i [f(p) \ast \partial_{i}\omega_{\varepsilon} (t, x) +[\nabla \Phi^i b(p) p] \ast \omega_{\varepsilon} (t, x)] \ln h_{\varepsilon}(t,x) \rmd x \rmd t \\
= & -\int_0^{t_0} \!\! \int_{\R^d}[f(p)\ast \partial_i \omega_{\varepsilon}(t,x) +(\nabla \Phi^i b(p) p)\ast \omega_{\varepsilon}(t, x)]\dfrac{\partial_i h_{\varepsilon}}{h_{\varepsilon}}(t,x) \rmd x \rmd t
\end{aligned}
\end{equation}
with the help of \eqref{12}. Similarly, integrability is a necessary condition for \eqref{19} to hold.
Since $p_{\varepsilon} \leq h_{\varepsilon}$, $b\leq b_1$ and $|\nabla \Phi(x)|\leq C|x|$ for all $|x|>R$, we get
\begin{equation}\label{20}
\begin{aligned}
\Big\|\dfrac{(\nabla \Phi^i b(p) p)\ast \omega_{\varepsilon}}{h_{\varepsilon}^{\frac{1}{2}}}\Big\|_{L^2}^2
 & \leq \int_0^{t_0} \!\! \int_{\R^d} \dfrac{[(\nabla \Phi^i b(p) p)\ast \omega_{\varepsilon}]^2}{p\ast \omega_{\varepsilon}} \rmd x \rmd t \\
& \leq \int_0^{t_0} \!\! \int_{\R^d} |\nabla \Phi^i b(p)|^2p \rmd x\rmd t
 < +\infty.
\end{aligned}
\end{equation}
According to \eqref{inequality}, $p_{\varepsilon} \leq h_{\varepsilon}$ and $ \varepsilon \max(|x|, 1)^{-d-1}\leq h_{\varepsilon}$, we have
\begin{equation}\label{21}
\begin{aligned}
& \bigg\|\dfrac{\nabla h_{\varepsilon}}{h_{\varepsilon}^{\frac{1}{2}}}\bigg\|_{L^2}^2
 \leq \int_0^{t_0} \!\! \int_{\R^d} \dfrac{2|\nabla p_{\varepsilon}|^2}{h_{\varepsilon}} +\dfrac{2\varepsilon^2(d+1)^2|x|^{-2d-4}1_{|x|\geq 1}}{h_{\varepsilon}}\rmd x \rmd t \\
\leq & 2\int_0^{t_0} \!\! \int_{\R^d} \dfrac{|p \ast \nabla \omega_{\varepsilon}|^2}{p\ast \omega_{\varepsilon}} \rmd x\rmd t +T2\varepsilon(d+1)^2\int_{\{|x|\geq 1\}} \dfrac{|x|^{-2d-4}}{|x|^{-d-1}}\rmd x \\
\leq & 2\int_0^{t_0}\Big( \int_{\R^d}p \rmd x \cdot \int_{\R^d}\dfrac{|\nabla\omega_{\varepsilon}|^2}{\omega_{\varepsilon}}\rmd x\Big )\rmd t +T2\varepsilon(d+1)^2\int_{\{|x|\geq 1\}} |x|^{-d-3}\rmd x
 < +\infty.
\end{aligned}
\end{equation}
Since \eqref{inequality} and $f'(r)\leq \gamma_1$, we obtain
\begin{equation}\label{22}
\begin{aligned}
\bigg\|\dfrac{f(p)\ast \partial_i\omega_{\varepsilon}}{h_{\varepsilon}^{\frac{1}{2}}}\bigg\|_{L^2}^2
 &\leq \int_0^{t_0} \!\! \int_{\R^d} \dfrac{|f(p)\ast \partial_i\omega_{\varepsilon}|^2}{p\ast \omega_{\varepsilon}} \rmd x \rmd t \\
 &\leq \gamma_1^2\int_0^{t_0}\Big(\int_{\R^d}p\rmd x \cdot \int_{\R^d} \dfrac{|\partial_i\omega_{\varepsilon}|^2}{\omega_{\varepsilon}}\rmd x\Big)\rmd t
< +\infty.
\end{aligned}
\end{equation}
Combining \eqref{20}, \eqref{21} and \eqref{22}, we get
\begin{equation*}\label{23}
\begin{aligned}
[f(p)\ast \partial_i \omega_{\varepsilon} +(\nabla \Phi^i b(p) p)\ast \omega_{\varepsilon}]
\dfrac{\partial_i h_{\varepsilon}}{h_{\varepsilon}}
=\Big(\dfrac{f(p)\ast \partial_i\omega_{\varepsilon}}{h_{\varepsilon}^{\frac{1}{2}}} +\dfrac{(\nabla \Phi^i b(p) p)\ast \omega_{\varepsilon}}{h_{\varepsilon}^{\frac{1}{2}}} \Big)\dfrac{\partial_i h_{\varepsilon}}{h_{\varepsilon}^{\frac{1}{2}}}
\in L^1(\R^d),
\end{aligned}
\end{equation*}
which implies that $\partial_t p_{\varepsilon} \ln h_{\varepsilon}(t,x)$ is integrable.
From
\begin{equation*}\label{24}
\begin{aligned}
& \int_0^{t_0}\!\!\int_{\R^d} \partial_t p_{\varepsilon} \ln h_{\varepsilon}(t,x) \rmd x \rmd t
= \int_0^{t_0} \!\! \int_{\R^d} [\partial_t(h_{\varepsilon}\ln h_{\varepsilon})-\partial p_{\varepsilon}] \rmd x \rmd t\\
= &\int_{\R^d}[h_{\varepsilon}(t_0, x)\ln h_{\varepsilon}(t_0, x) -h_{\varepsilon}(0, x)\ln h_{\varepsilon}(0, x)] \rmd x \in L^1(\R^d)
\end{aligned}
\end{equation*}
and
\begin{equation*}\label{25}
\begin{aligned}
h_{\varepsilon}(t_0, x)|\ln h_{\varepsilon}(t_0, x)|
\leq h_{\varepsilon}(t_0, x)(C_1+C_2\Lambda) \in L^1(\R^d),
\end{aligned}
\end{equation*}
we can deduce that
\begin{equation*}\label{26}
\begin{aligned}
h_{\varepsilon}(0, x)\ln h_{\varepsilon}(0, x) \in L^1(\R^d).
\end{aligned}
\end{equation*}

We denote $L_{\varepsilon}:= \int_{\R^d}[h_{\varepsilon}(t_0, x)\ln h_{\varepsilon}(t_0, x) -h_{\varepsilon}(0, x)\ln h_{\varepsilon}(0, x)] \rmd x$ and seek a lower bound for $L_{\varepsilon}$. By applying Jensen's inequality to convex function $x\log x$ and utilizing Shannon's entropy power inequality \cite{C,S2}, we obtain
\begin{equation*}\label{left1}
\begin{aligned}
& \int_{\R^d} h_{\varepsilon}(0,x)\ln h_{\varepsilon}(0,x) \rmd x \\
\leq & \int_{\R^d} p_{\varepsilon}(0,x)\ln(2 p_{\varepsilon}(0,x))\rmd x +\int_{\R^d}\varepsilon \max(|x|, 1)^{-d-1}\ln(2\varepsilon\max(|x|, 1)^{-d-1}) \rmd x \\
\leq & \ln 2+\int_{\R^d}p_{\varepsilon}(0,x)\ln p_{\varepsilon}(0,x) \rmd x +\varepsilon \ln 2\int_{\R^d}\max(|x|, 1)^{-d-1}\rmd x \\
\leq & \ln 2+\int_{\R^d}p_0(x)\ln p_0(x) \rmd x +\varepsilon\ln 2 \int_{\R^d}\max(|x|,1)^{-d-1} \rmd x \\
\leq & \ln 2+ \dfrac{b_1}{\gamma_1}\int_{\R^d}\eta(p_0) \rmd x +\varepsilon\ln 2 \int_{\R^d}\max(|x|,1)^{-d-1} \rmd x
:=L_1(\varepsilon).
\end{aligned}
\end{equation*}
Meanwhile, \eqref{11} yields
\begin{equation*}\label{left2}
\begin{aligned}
& \int_{\R^d} h_{\varepsilon}(t_0, x)\ln h_{\varepsilon}(t_0, x)\rmd x
\geq  -(d+1)\int_{\R^d} h_{\varepsilon}(t_0, x)\Lambda(x)\rmd x\\
\geq & -(d+1)\int_{\R^d} p(t_0, x)\Lambda(x)\rmd x -\varepsilon M_1 (d+1) :=-L_2(\varepsilon).
\end{aligned}
\end{equation*}
Consequently,
\begin{equation}\label{left}
L_{\varepsilon} \geq -L_1(\varepsilon)-L_2(\varepsilon),
\end{equation}
where $L_1(\varepsilon)$ and $L_2(\varepsilon)$ are uniformly bounded with respect to $\varepsilon$.

Now, we prove the square integrability of $\frac{\nabla h_{\varepsilon}}{h_{\varepsilon}}$. By the definition of convolution,
\begin{equation}\label{27}
\partial_i(f(p)\ast \omega_{\varepsilon})=\dfrac{f(p)}{p}\partial_i p_{\varepsilon} +\int_{\R^d}\partial_i\omega_{\varepsilon}(x-y)\Big( \dfrac{f(p(t,y))}{p(t,y)} -\dfrac{f(p(t,x))}{p(t,x)} \Big) p(t, y)\rmd y.
\end{equation}
Elementary computations based on \eqref{19} and \eqref{27} demonstrate that
\begin{equation}\label{28}
\begin{aligned}
& \int_0^{t_0}\!\! \int_{\R^d} \dfrac{f(p)}{p}\dfrac{\partial_i h_{\varepsilon}}{h_{\varepsilon}}\partial_i h_{\varepsilon} \rmd x \rmd t
= \int_0^{t_0} \!\! \int_{\R^d} \dfrac{f(p)}{p}\dfrac{\partial_i h_{\varepsilon}}{h_{\varepsilon}} \Big(\partial_i p_{\varepsilon} +\partial_i\big(\dfrac{\varepsilon}{\max(|x|,1)^{-d-1}}\big) \Big) \rmd x \rmd t \\
= &\int_0^{t_0} \!\! \int_{\R^d} \dfrac{f(p)}{p}\dfrac{\partial_i h_{\varepsilon}}{h_{\varepsilon}} \partial_i\big(\dfrac{\varepsilon}{\max(|x|,1)^{-d-1}}\big) \rmd x\rmd t + \int_0^{t_0} \!\! \int_{\R^d} \dfrac{\partial_i h_{\varepsilon}}{h_{\varepsilon}}\partial_i(f(p)\ast \omega_{\varepsilon})\rmd x\rmd t \\
& -\int_0^{t_0} \!\! \int_{\R^d}\Big\{\dfrac{\partial_i h_{\varepsilon}}{h_{\varepsilon}} \int_{\R^d}\partial_i\omega_{\varepsilon}(x-y) \Big[\dfrac{f(p(t,y))}{p(t,y)} -\dfrac{f(p(t,x))}{p(t,x)} \Big] p(t, y)\rmd y  \Big\} \rmd x\rmd t \\
= &\int_0^{t_0} \!\! \int_{\R^d} \dfrac{f(p)}{p}\dfrac{\partial_i h_{\varepsilon}}{h_{\varepsilon}} \partial_i\big(\dfrac{\varepsilon}{\max(|x|,1)^{-d-1}}\big) \rmd x\rmd t -\int_0^{t_0} \!\! \int_{\R^d} \Big[(\nabla \Phi^i b(p)p)\ast\omega_{\varepsilon}(t,x) \Big]\dfrac{\partial_i h_{\varepsilon}}{h_{\varepsilon}} \rmd x\rmd t \\
& -L_{\varepsilon} -\int_0^{t_0} \!\! \int_{\R^d}\Big\{\dfrac{\partial_i h_{\varepsilon}}{h_{\varepsilon}} \int_{\R^d}\partial_i\omega_{\varepsilon}(x-y) \Big[\dfrac{f(p(t,y))}{p(t,y)} -\dfrac{f(p(t,x))}{p(t,x)} \Big] p(t, y)\rmd y  \Big\} \rmd x\rmd t.
\end{aligned}
\end{equation}
Applying Assumption \ref{con}, we obtain
\begin{equation}\label{29}
\begin{aligned}
& \int_{\R^d}\partial_i\omega_{\varepsilon}(x-y)\Big[\dfrac{f(p(t,y))}{p(t,y)} -\dfrac{f(p(t,x))}{p(t,x)}\Big]p(t,y)\rmd y \\
\leq & \int_{\R^d}(\gamma_2-\gamma_1)\partial_i\omega_{\varepsilon}(x-y)p(t, y) \rmd y
= (\gamma_2-\gamma_1)(p\ast\partial_i\omega_{\varepsilon})(t, x).
\end{aligned}
\end{equation}
Applying H\"older's inequality and \eqref{inequality}, we obtain the estimate
\begin{equation}\label{A1}
\begin{aligned}
& \int_0^{t_0} \!\! \int_{\R^d} \Big[(\nabla \Phi^i b(p)p)\ast\omega_{\varepsilon}(t,x) \Big]\dfrac{\partial_i h_{\varepsilon}}{h_{\varepsilon}} \rmd x\rmd t \\
\leq & \Big(\int_0^{t_0} \!\! \int_{\R^d}\dfrac{|\nabla h_{\varepsilon}|^2}{h_{\varepsilon}} \rmd x\rmd t \Big)^{\frac{1}{2}}
\Big(\int_0^{t_0} \!\! \int_{\R^d} \dfrac{\sum_{i=1}^{d}[(\nabla\Phi^i b(p)p)\ast \omega_{\varepsilon}]^2}{p\ast \omega_{\varepsilon}} \rmd x\rmd t \Big)^{\frac{1}{2}} \\
\leq & \Big(\int_0^{t_0} \!\! \int_{\R^d}\dfrac{|\nabla h_{\varepsilon}|^2}{h_{\varepsilon}} \rmd x\rmd t \Big)^{\frac{1}{2}} \cdot A_1,
\end{aligned}
\end{equation}
where $A_1:=\big(\int_0^{t_0}\int_{\R^d}|\nabla\Phi^i b(p)|^2 p \rmd x \big)^{\frac{1}{2}}$ is a nonnegative constant.
From \eqref{29} and \eqref{inequality}, we have
\begin{equation}\label{A2}
\begin{aligned}
& -\int_0^{t_0} \!\! \int_{\R^d}\Big\{\dfrac{\partial_i h_{\varepsilon}}{h_{\varepsilon}} \int_{\R^d}\partial_i\omega_{\varepsilon}(x-y) \Big[\dfrac{f(p(t,y))}{p(t,y)} -\dfrac{f(p(t,x))}{p(t,x)} \Big] p(t, y)\rmd y  \Big\} \rmd x\rmd t \\
\leq & \Big(\int_0^{t_0} \!\! \int_{\R^d}\dfrac{|\nabla h_{\varepsilon}|^2}{h_{\varepsilon}} \rmd x\rmd t \Big)^{\frac{1}{2}} (\gamma_2-\gamma_1)
\Big(\int_0^{t_0} \!\! \int_{\R^d}\dfrac{(p\ast \partial_i\omega_{\varepsilon})^2}{p_{\varepsilon}} \rmd x\rmd t\Big)^{\frac{1}{2}}\\
\leq & \Big(\int_0^{t_0} \!\! \int_{\R^d}\dfrac{|\nabla h_{\varepsilon}|^2}{h_{\varepsilon}} \rmd x\rmd t \Big)^{\frac{1}{2}} \cdot A_2,
\end{aligned}
\end{equation}
where $A_2:= (\gamma_2-\gamma_1)\big[\int_0^{t_0} \int_{\R^d}\frac{d|\nabla\omega_{\varepsilon}|^2}{\omega_{\varepsilon}}\rmd x \rmd t \big]^{\frac{1}{2}}$ is a nonnegative constant.
Similarly, we can obtain the estimate
\begin{equation}\label{A3}
\begin{aligned}
 \int_0^{t_0} \!\! \int_{\R^d} \dfrac{f(p)}{p}\dfrac{\partial_i h_{\varepsilon}}{h_{\varepsilon}} \partial_i\big(\dfrac{\varepsilon}{\max(|x|,1)^{-d-1}}\big) \rmd x\rmd t
 \leq \Big(\int_0^{t_0} \!\! \int_{\R^d}\dfrac{|\nabla h_{\varepsilon}|^2}{h_{\varepsilon}} \rmd x\rmd t \Big)^{\frac{1}{2}} \cdot A_3,
\end{aligned}
\end{equation}
where $A_3:=\sqrt{\varepsilon} \gamma_2 \big(\int_0^{t_0}\int_{\{|x|\geq 1\}} (d+1)^2|x|^{-d-3} \rmd x \rmd t \big)^{\frac{1}{2}}$ is a nonnegative constant.
Substituting \eqref{A1}, \eqref{A2}, \eqref{A3}, \eqref{left} into \eqref{28}, we get
\begin{equation*}
\begin{aligned}
& \gamma_1 \int_0^{t_0} \!\! \int_{\R^d} \dfrac{\partial_i h_{\varepsilon}}{h_{\varepsilon}}\partial_i h_{\varepsilon} \rmd x \rmd t
 \leq \int_0^{t_0} \!\! \int_{\R^d} \dfrac{f(p)}{p}\dfrac{\partial_i h_{\varepsilon}}{h_{\varepsilon}}\partial_i h_{\varepsilon} \rmd x \rmd t \\
\leq & \Big(\int_0^{t_0} \!\! \int_{\R^d}\dfrac{|\nabla h_{\varepsilon}|^2}{h_{\varepsilon}} \rmd x\rmd t \Big)^{\frac{1}{2}} \cdot (A_1 +A_2 +A_3) +L_1(\varepsilon) +L_2(\varepsilon) \\
= & 2\Big(\dfrac{\gamma_1}{2}\int_0^{t_0} \!\! \int_{\R^d}\dfrac{|\nabla h_{\varepsilon}|^2}{h_{\varepsilon}}\rmd x\rmd t \Big)^{\frac{1}{2}} \cdot
\dfrac{A_1+A_2+A_3}{\sqrt{2\gamma_1}} +L_1(\varepsilon) +L_2(\varepsilon) \\
\leq & \dfrac{\gamma_1}{2}\int_0^{t_0} \!\! \int_{\R^d}\dfrac{|\nabla h_{\varepsilon}|^2}{h_{\varepsilon}}\rmd x\rmd t +\dfrac{(A_1+A_2+A_3)^2}{2\gamma_1}
+L_1(\varepsilon) +L_2(\varepsilon).
\end{aligned}
\end{equation*}
Therefore,
\begin{equation*}
\begin{aligned}
 \int_0^{t_0} \!\! \int_{\R^d} \dfrac{|\nabla h_{\varepsilon}|^2}{h_{\varepsilon}} \rmd x\rmd t
 \leq \dfrac{(A_1+A_2+A_3)^2}{\gamma_1^2} +\dfrac{2}{\gamma_1}L_1(\varepsilon) +\dfrac{2}{\gamma_1}L_2(\varepsilon).
\end{aligned}
\end{equation*}
Let $\varepsilon \rightarrow 0$, we obtain
\begin{equation*}\label{M}
\begin{aligned}
& \int_0^{t_0} \!\! \int_{\R^d} \dfrac{|\nabla p(t,x)|^2}{p(t,x)} \rmd x\rmd t \\
\leq & \dfrac{(A_1+A_2)^2}{\gamma_1^2} +\dfrac{2}{\gamma_1}\Big(\ln 2+\dfrac{b_1}{\gamma_1}\int_{\R^d}\eta(p_0) \rmd x +(d+1)\int_{\R^d} p(t_0, x)\Lambda(x)\rmd x\Big)< +\infty.
\end{aligned}
\end{equation*}
This completes the proof.

\section*{Acknowledgements}
This work is supported by National Key R\&D Program of China (No. 2023YFA1009200), NSFC Grant 11925102, and Liaoning Revitalization Talents Program (Grant XLYC2202042).


\end{document}